\documentclass[12pt]{amsart}

\usepackage{amsmath}
\usepackage{amssymb}
\usepackage{pdfsync}


\newcommand{\C}{{\mathbb C}}       
\newcommand{\R}{{\mathbb R}}       
\newcommand{\Z}{{\mathbb Z}}       
\newcommand{\DD}{{\mathcal D}}

\newcommand{\HH}{{\mathcal H}}

\newcommand{\WW}{{\mathcal W}}

\newcommand{\CC}{{\mathcal C}}

\newcommand{\diam}{{\rm diam}}
\newcommand{\dist}{{\rm dist}}

\newcommand{\imag}{{\rm Im}}

\newcommand{\rf}[1]{{(\ref{#1})}}

\newcommand{\vphi}{{\varphi}}
\newcommand{\ve}{{\varepsilon}}
\newcommand{\vv}{{\vspace{2mm}}}
\newcommand{\vvv}{{\vspace{3mm}}}
\newcommand{\wt}[1]{{\widetilde{#1}}}
\newcommand{\wh}[1]{{\widehat{#1}}}

\newcommand{\rest}{{\lfloor}}

\textwidth15cm
\textheight21cm
\evensidemargin.2cm
\oddsidemargin.2cm

\addtolength{\headheight}{5.2pt}    

\newtheorem{theorem}{Theorem}[section]
\newtheorem{lemma}[theorem]{Lemma}

\newtheorem{coro}[theorem]{Corollary}

\newtheorem*{theorem*}{Theorem}

\theoremstyle{definition}

\theoremstyle{remark}
\newtheorem{rem}[theorem]{Remark}

\numberwithin{equation}{section}

\newcommand{\brem}{\begin{rem}}
\newcommand{\erem}{\end{rem}}


\begin{document}

\title[Regularity of domains in terms of the
Beurling transform]{Regularity of $\CC^1$ and Lipschitz domains in terms of the
Beurling transform}

\author{Xavier Tolsa}

\address{Xavier Tolsa. Instituci\'{o} Catalana de Recerca
i Estudis Avan\c{c}ats (ICREA) and Departament de
Ma\-te\-m\`a\-ti\-ques, Universitat Aut\`onoma de Bar\-ce\-lo\-na,
Catalonia} \email{xtolsa@mat.uab.cat}

\thanks{Partially supported by grants
2009SGR-000420 (Generalitat de
Catalunya) and MTM-2010-16232 (Spain).}

\maketitle

\begin{abstract}
Let $\Omega\subset\C$ be a bounded $\CC^1$ domain, or a Lipschitz 
domain ``flat 
enough'', and consider the Beurling transform of $\chi_\Omega$:
$$B\chi_\Omega(z)=\lim_{\ve\to0}\frac{-1}\pi\int_{w\in\Omega,|z-w|>\ve}\frac1{(z-w)^2}\,dm(w).$$
Using a priori estimates, in this paper we solve the following free boundary problem: if $B\chi_\Omega$
belongs to the Sobolev space $W^{\alpha,p}(\Omega)$ for  $0<\alpha\leq1$, $1<p<\infty$ such that 
$\alpha p>1$, then the outward unit normal $N$ on $\partial\Omega$ 
is in the Besov space ${B}_{p,p}^{\alpha-1/p}(\partial\Omega)$. 
The converse statement, proved previously by Cruz and Tolsa, also holds. So we have
$$B(\chi_\Omega)\in W^{\alpha,p}(\Omega)\quad\Longleftrightarrow \quad N\in {B}_{p,p}^{\alpha-1/p}(\partial\Omega).$$
Together with recent results by Cruz, Mateu and Orobitg, from the preceding equivalence
one infers that the Beurling transform is bounded
in $W^{\alpha,p}(\Omega)$ if and only if 
the outward unit normal $N$  
belongs to ${B}_{p,p}^{\alpha-1/p}(\partial\Omega)$, assuming that $\alpha p>2$.
\end{abstract}


\section{Introduction}

In this paper we show that the boundedness of the Beurling transform in the Sobolev spaces
$W^{\alpha,p}(\Omega)$, with $0<\alpha\leq1$ and $1<p<\infty$ such that $\alpha p> 1$,  characterizes the Besov smoothness of the boundary $\partial\Omega$, whenever $\Omega$ is a $\CC^1$ domain, or a Lipschitz domain ``flat enough''. This can be considered as a free boundary problem.

The Beurling transform of a function $f:\C\to\C$, with $f\in L^p$ for some $1\leq p<\infty$, is
defined by
$$Bf(z)=\lim_{\ve\to0}\frac{-1}\pi\int_{|z-w|>\ve}\frac{f(w)}{(z-w)^2}\,dm(w).$$
It is known that this limit exist a.e. The Beurling transform plays an essential role
in the theory of quasiconformal mappings in the plane, because it intertwines the $\partial$ and
$\bar \partial$ derivatives. Indeed, in the sense of distributions, one has
$$B(\bar\partial f)=\partial f.$$

Let $\Omega\subset\C$ be a bounded domain (open and connected). We say that $\Omega\subset\C$ is a $(\delta,R)$-Lipschitz domain if for each $z\in\partial\Omega$
there exists a Lipschitz function $A:\R\to\R$ with slope $\|A'\|_\infty\leq\delta$ such that, after
a suitable rotation,
$$\Omega\cap B(z,R) = \bigl\{(x,y)\in B(z,R):\,y>A(x)\bigr\}.$$
If we do not care about the constants $\delta$ and $R$, then we just say that $\Omega$ is a
Lipschitz domain. If in this definition we assume the function $A$ to be of class $\CC^1$,
then we say that $\Omega$ is a $\CC^1$ domain.

In \cite{Cruz-Tolsa} it has been shown that for any Lipschitz domain $\Omega$ and 
$0<\alpha\leq1$ and $1<p<\infty$ such that $\alpha p> 1$,
if the outward unit normal is in the Besov space $B_{p,p}^{\alpha-1/p}(\partial\Omega)$, then $B(\chi_\Omega)$ belongs
to the Sobolev space $W^{\alpha,p}(\Omega)$. More precisely, the following estimate has been proved:
\begin{equation}\label{eqct3}
\|B(\chi_\Omega)\|_{\dot W^{\alpha,p}(\Omega)}\leq c\,\|N\|_{\dot{B}_{p,p}^{\alpha-1/p}(\partial\Omega)},
\end{equation}
where $N$ stands for the 
outward normal unitary vector on $\partial \Omega$, 
$\dot W^{\alpha,p}(\Omega)$ is a homogeneous sobolev space on $\Omega$, and
$\dot{B}_{p,p}^{\alpha-1/p}(\partial\Omega)$ is a homogeneous
Besov space on $\partial\Omega$. See the next section for the precise definition of Sobolev and Besov spaces, 
as well as their homogeneous versions. The
constant $c$ in \rf{eqct3} may depend on $p$ and on 
the Lipschitz character of $\Omega$, i.e.\ on $\delta$ and on
 $\HH^1(\partial \Omega)/R$ (here $\HH^1$ stands for the length or $1$-dimensional Hausdorff measure). 
 Observe that, by the $L^p$ boundedness of the Beurling transform,
$$\|B(\chi_\Omega)\|_{W^{\alpha,p}(\Omega)}\leq c\,\bigl(m(\Omega)^{1/p} + \|B(\chi_\Omega)\|_{
\dot W^{\alpha,p}(\Omega)}\bigr).$$
Thus \rf{eqct3} guaranties that $B(\chi_\Omega)\in W^{\alpha,p}(\Omega)$ whenever $N\in \dot{B}_{p,p}^{\alpha-1/p}(\partial\Omega).$ 


Our main result is a (partial) converse to \rf{eqct3}:

\begin{theorem}\label{teodom}
Let $\Omega\subset\C$ be a $(\delta,R)$-Lipschitz domain and 
$0<\alpha\leq1$ and $1<p<\infty$ such that $\alpha p> 1$. If $\delta=\delta(p)>0$ is small enough, then
\begin{equation}\label{eqdf89}
\|N\|_{\dot B_{p,p}^{\alpha-1/p}(\partial\Omega)}\leq c\,\|B(\chi_\Omega)\|_{\dot W^{\alpha,p}(\Omega)}
+ c\,\HH^1(\partial \Omega)^{-\alpha+2/p}.
,
\end{equation}
where $c$ depends on $\delta$ and $p$.
\end{theorem}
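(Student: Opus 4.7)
My plan is to recover $N$ from the jump of $B\chi_\Omega$ across $\partial\Omega$, control the inside boundary trace by the Sobolev trace theorem, and handle the outside trace by exploiting the decay of $B\chi_\Omega$ at infinity together with the flatness of $\partial\Omega$. I would work under an a priori smoothness assumption on $\partial\Omega$; the claimed inequality has constants depending only on $\delta, p$ and scale--invariant geometric data, so a standard approximation reduces the general Lipschitz case to the smooth one. Applying Green/Stokes' theorem to the defining integral yields the boundary representation
\begin{equation*}
B\chi_\Omega(z)\;=\;-\frac{1}{2\pi i}\int_{\partial\Omega}\frac{\bar N(w)^2}{w-z}\,dw, \qquad z\in\C\setminus\partial\Omega,
\end{equation*}
so $B\chi_\Omega$ is (up to sign) the Cauchy integral of $\bar N^2$, hence holomorphic both in $\Omega$ and in $\Omega^c$. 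Writing $F^+$, $G^-$ for the non-tangential boundary values from inside and outside respectively, the Plemelj--Sokhotski formula gives $F^+-G^-=-\bar N^2$; equivalently, $\bar N^2 = G^--F^+$ is the Hardy decomposition of $\bar N^2$ into $F^+\in H^p_+$ and $G^-\in H^p_-$.

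The Sobolev trace theorem on Lipschitz domains (valid since $\alpha p>1$) combined with the hypothesis $B\chi_\Omega\in\dot{W}^{\alpha,p}(\Omega)$ gives
\begin{equation*}
\|F^+\|_{\dot{B}_{p,p}^{\alpha-1/p}(\partial\Omega)}\;\lesssim\;\|B\chi_\Omega\|_{\dot{W}^{\alpha,p}(\Omega)} + \HH^1(\partial\Omega)^{-\alpha+2/p}.
\end{equation*}
For the outside trace $G^-$, I would use two facts. First, $B\chi_\Omega(z) = -m(\Omega)/(\pi z^2) + O(|z|^{-3})$ as $|z|\to\infty$, so the global shape of $G^-$ is explicit: it is asymptotically the boundary value of a rescaled $1/z^2$, whose scale-invariant baseline Besov norm is exactly of the order $\HH^1(\partial\Omega)^{-\alpha+2/p}$ (consistent with the disk, where $F^+\equiv 0$ and the entire Besov norm of $N$ is captured by this lower--order term). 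Second, in a graph parametrization $y=A(x)$ with $\|A'\|_\infty\le\delta$, the p.v.\ Cauchy operator on $\partial\Omega$ is a $\delta$-small perturbation of a multiple of the Hilbert transform on $\R$, so the Hardy projections $P_\pm$ are bounded on the Besov space $\dot{B}_{p,p}^{\alpha-1/p}(\partial\Omega)$ and the deviation of $G^-$ from the scaled--disk profile is controlled by $\|F^+\|_{\dot{B}_{p,p}^{\alpha-1/p}}$ via a perturbative Cauchy argument. Combining these yields $\|G^-\|_{\dot{B}_{p,p}^{\alpha-1/p}}\lesssim\|B\chi_\Omega\|_{\dot{W}^{\alpha,p}(\Omega)} + \HH^1(\partial\Omega)^{-\alpha+2/p}$, and then $\|\bar N^2\|_{\dot{B}_{p,p}^{\alpha-1/p}} \lesssim \|F^+\|_{\dot{B}_{p,p}^{\alpha-1/p}} + \|G^-\|_{\dot{B}_{p,p}^{\alpha-1/p}}$ together with the bi-Lipschitz relation $\|N\|\sim\|\bar N^2\|$ (since $|N|=1$ with $N\mapsto\bar N^2$ bi-Lipschitz on the unit circle) delivers the desired estimate.

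The main obstacle will be the outside step: establishing that $G^-$ is genuinely controlled by the inside Sobolev data plus the explicit decay at infinity, rather than being an independent piece of geometric data on $\partial\Omega$. This is where the flatness hypothesis $\delta\leq\delta(p)$ is essential---it makes the Cauchy integral on the Lipschitz graph a small perturbation of the Hilbert transform, so that the associated layer--potential arguments close up on Besov spaces and the Hardy projections are bounded with the correct norms.
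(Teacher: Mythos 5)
Your reduction to a Plemelj jump problem is an attractive and genuinely different route from the paper (which never takes boundary traces: it bounds $\|N\|_{\dot B_{p,p}^{\alpha-1/p}}$ from below by a weighted interior integral of $\partial B\chi_\Omega$ via Dorronsoro's $\beta_1$-characterization, a Fourier lower bound for the kernel arising from $\mathrm{Im}\,\partial B\chi_\Omega$ on Whitney squares, and a cut-off/averaging localization for bounded domains). The representation of $B\chi_\Omega$ as the Cauchy integral of $\bar N^2$ and the jump relation $F^+-G^-=\pm\bar N^2$ are correct, and the trace step for $F^+$ is plausible modulo care with the paper's $D^\alpha$-based definition of $W^{\alpha,p}(\Omega)$ and the homogeneous/inhomogeneous bookkeeping.

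The genuine gap is the exterior piece, and as written the argument for it is circular. Boundedness of the Hardy projections $C_\pm$ on $\dot B_{p,p}^{\alpha-1/p}(\partial\Omega)$ (which, incidentally, holds for all Lipschitz constants, not just small $\delta$) only gives $\|G^-\|=\|C_-(\bar N^2)\|\lesssim\|\bar N^2\|$ --- the wrong direction. The statement you actually need, that $G^-$ deviates from the scaled-disk profile by at most $c\,\|F^+\|$ plus the scale term, is (via $G^-=F^+\mp\bar N^2$) essentially a restatement of the theorem itself, so ``a perturbative Cauchy argument'' cannot be left as a black box: the whole content of the result is a \emph{lower} bound saying that the interior projection $F^+=C_+(\bar N^2)$ captures a definite fraction of $\|\bar N^2\|_{\dot B_{p,p}^{\alpha-1/p}}$ modulo the $\HH^1(\partial\Omega)^{-\alpha+2/p}$ baseline. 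There is a real mechanism available --- in a $\delta$-flat chart $\bar N^2-c_0$ is, to first order in $\delta$, a constant multiple of the real-valued function $A'$, and for real $u$ one has $\|\tfrac12(u+iHu)\|\geq\tfrac12\|u\|$, so the flat interior projection is bounded below on such densities --- but turning this into a proof requires (i) quantitative control of the $O(\delta)$ errors both in the density and in the difference between the graph Cauchy projection and the Hilbert transform, with constants small enough to be absorbed (this is exactly where $\delta=\delta(p)$ small is used, not merely to make layer potentials bounded), and (ii) a localization argument passing from a single graph chart of size $R$ to the closed boundary of a bounded domain, with the resulting error terms of size $\HH^1(\partial\Omega)^{2-\alpha p}$; this is a substantial piece of work (it occupies Sections 4--5 of the paper) and is absent from your sketch. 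A secondary gap: your opening reduction ``approximate the Lipschitz domain by smooth ones'' is not routine here, because the right-hand side $\|B\chi_{\Omega_j}\|_{\dot W^{\alpha,p}(\Omega_j)}$ for the approximating domains is a different function on a different domain, and there is no obvious uniform bound for it in terms of $\|B\chi_\Omega\|_{\dot W^{\alpha,p}(\Omega)}$; the paper avoids this by proving a priori estimates directly on the Lipschitz domain.
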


Some remarks are in order. 
Notice first that $\CC^1$ domains are $(\delta,R)$-Lipschitz domains for every $\delta>0$ and
an appropriate $R=R(\delta)$. So the theorem applies to all $\CC^1$ domains.
Then, by combining the results from \cite{Cruz-Tolsa} with Theorem \ref{teodom}, one infers that,
for a $\CC^1$ domain $\Omega$ and $\alpha,p$ as above, 
\begin{equation}\label{eqdob22}
B(\chi_\Omega)\in W^{\alpha,p}(\Omega)\quad\Longleftrightarrow \quad N\in {B}_{p,p}^{\alpha-1/p}(\partial\Omega).
\end{equation}


Let us remark that the inequality
$$\|N\|_{\dot{B}_{p,p}^{\alpha-1/p}(\partial\Omega)}\leq c\,\|B(\chi_\Omega)\|_{\dot W^{\alpha,p}(\Omega)}$$
fails in general. Indeed, when $\Omega$ is an open ball it turns out that $B(\chi_\Omega)$ vanishes identically on $\Omega$. So $\|B(\chi_\Omega)\|_{\dot W^{\alpha,p}(\Omega)}=0$, while $\|N\|_{\dot{B}_{p,p}^{\alpha-1/p}(\partial\Omega)}\neq0$, because $N$ is not constant.

Recall that the Besov spaces $B_{p,p}^{\alpha-1/p}$ appear naturally in the context of Sobolev spaces.
Indeed, a well known theorem of Gagliardo \cite{Gagliardo} asserts that the traces of the functions from $W^{1,p}(\Omega)$ on $\partial\Omega$ coincide with the functions from 
$B_{p,p}^{1-1/p}(\partial\Omega)$, whenever $\Omega$ is a Lipschitz domain. An analogous result holds for 
$0<\alpha<1$.
So, by this result theorem with \rf{eqdob22}, one deduces that $B(\chi_\Omega)\in W^{\alpha,p}(\Omega)$
if and only if $N$ is the trace of some (vectorial) function from $W^{\alpha,p}(\Omega)$, which
looks a rather surprising statement at first sight.

Our motivation for the characterization of those domains such that $B\chi_\Omega\in W^{\alpha,p}
(\Omega)$ arises from the results of Cruz, Mateu and Orobitg in \cite{CMO}. 
In this paper the authors study the smoothness of quasiconformal mappings when the Beltrami coefficient belongs to $W^{\alpha,p}(\Omega)$, for some fixed $1<p<\infty$ and $0<\alpha<1$.
As an important step in their arguments, they prove following kind of $T1$ theorem:

\begin{theorem*}[\cite{CMO}]
Let $\Omega\subset\C$ be a $\CC^{1+\ve}$ domain, for some $\ve>0$, and let 
$0<\alpha\leq1$ and $1<p<\infty$ be such that $\alpha p> 2$.
Then, the Beurling transform is bounded
in $W^{\alpha,p}(\Omega)$ if and only if $B(\chi_\Omega)\in W^{\alpha,p}(\Omega)$.
\end{theorem*}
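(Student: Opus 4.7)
For bounded $\Omega$ the constant function $1$ lies in $W^{\alpha,p}(\Omega)$. Interpreting the Beurling transform on $\Omega$ as $f\mapsto B(f\chi_\Omega)|_\Omega$, applying boundedness to $f\equiv 1$ immediately yields $B(\chi_\Omega)\in W^{\alpha,p}(\Omega)$.

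\textbf{Sufficiency --- plan.} My strategy is a paraproduct-type splitting that isolates the only possible obstruction to boundedness. Since $\alpha p>2$, the Sobolev embedding gives $W^{\alpha,p}(\Omega)\hookrightarrow \CC^{\alpha-2/p}(\overline{\Omega})$ and moreover $W^{\alpha,p}(\Omega)$ is a Banach algebra. Given $f\in W^{\alpha,p}(\Omega)$, extended by zero to $\C$, I would write the pointwise decomposition
\begin{equation*}
B(f\chi_\Omega)(z)=f(z)\,B(\chi_\Omega)(z)+Rf(z),\qquad Rf(z)=-\frac{1}{\pi}\,\text{p.v.}\!\int_\Omega\frac{f(w)-f(z)}{(z-w)^2}\,dm(w),
\end{equation*}
valid for $z\in\Omega$. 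The first summand is in $W^{\alpha,p}(\Omega)$ with norm bounded by $\|f\|_{W^{\alpha,p}(\Omega)}\,\|B(\chi_\Omega)\|_{W^{\alpha,p}(\Omega)}$, thanks to the algebra property and the hypothesis on $B(\chi_\Omega)$. Sufficiency then reduces to proving that the commutator-type operator $R$ is bounded on $W^{\alpha,p}(\Omega)$ \emph{unconditionally}, with constants depending only on $\Omega$, $\alpha$ and $p$.

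\textbf{Analysis of $R$.} The cancellation $f(w)-f(z)$ raises the kernel by one order, trading the singular kernel $(z-w)^{-2}$ for a $-1$-homogeneous kernel paired with the $\alpha$-H\"older regularity of $f$. For $\alpha=1$, integration by parts via Green's formula rewrites $Rf$ as a Cauchy transform of $\partial f$ plus a boundary Cauchy integral on $\partial\Omega$, both controlled on $W^{1,p}(\Omega)$ by classical theory on $\CC^{1+\ve}$ domains. For fractional $\alpha$, I would work with the Gagliardo characterization
\begin{equation*}
\|g\|_{\dot W^{\alpha,p}(\Omega)}^p\simeq\iint_{\Omega\times\Omega}\frac{|g(z)-g(z')|^p}{|z-z'|^{2+\alpha p}}\,dm(z)\,dm(z')
\end{equation*}
and estimate $|Rf(z)-Rf(z')|$ by splitting the $w$-integration into the region near $\{z,z'\}$ (handled by H\"older continuity of $f$) and its complement (handled by smoothness of the kernel plus a maximal-function argument applied to $f$).

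\textbf{Main obstacle.} The technical heart of the argument is the unconditional boundedness of $R$. Although its kernel is milder than that of $B$, two subtleties must be addressed: (i) the Gagliardo double integral runs only over $\Omega\times\Omega$, so the difference $Rf(z)-Rf(z')$ couples the bulk behaviour of $f$ with the boundary geometry through the chord $z-z'$; (ii) carrying out the fractional-cancellation estimates uniformly up to $\partial\Omega$ requires a Whitney-type decomposition of $\Omega$ and kernel estimates that genuinely use the $\CC^{1+\ve}$ smoothness of $\partial\Omega$, rather than just $\CC^1$, to gain the extra $\ve$ decay needed to absorb the boundary-layer error terms. Arranging these estimates so that all constants depend only on the $\CC^{1+\ve}$ character of $\Omega$ is what I expect to be the bulk of the proof.
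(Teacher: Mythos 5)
This statement is imported verbatim from \cite{CMO}; the paper you were given contains no proof of it (it is used as a black box to deduce the Corollary), so there is no internal argument to compare your proposal against. Judged on its own terms, your necessity argument is fine: with the convention $f\mapsto B(f\chi_\Omega)|_\Omega$ and $1\in W^{\alpha,p}(\Omega)$ for bounded $\Omega$, boundedness applied to $f\equiv1$ gives $B(\chi_\Omega)\in W^{\alpha,p}(\Omega)$. Your reduction for sufficiency is also sensible: since $\alpha p>2$, the embedding $W^{\alpha,p}(\Omega)\hookrightarrow \CC^{\alpha-2/p}(\overline\Omega)$ makes the integral defining $Rf$ absolutely convergent, the splitting $B(f\chi_\Omega)=f\,B(\chi_\Omega)+Rf$ is legitimate, and the algebra property handles the paraproduct term once $B(\chi_\Omega)\in W^{\alpha,p}(\Omega)$.

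The genuine gap is that the entire content of the theorem is then the \emph{unconditional} bound $\|Rf\|_{W^{\alpha,p}(\Omega)}\lesssim\|f\|_{W^{\alpha,p}(\Omega)}$ for every $\CC^{1+\ve}$ domain, and you do not prove it: you only describe how you ``would'' attack it and say you expect it to be the bulk of the proof. That expectation is correct, which is exactly why the proposal is not a proof. For $\alpha=1$, the Green's-formula rewriting of $Rf$ produces a boundary Cauchy-type integral whose $W^{1,p}(\Omega)$ bound is not ``classical'' in the sense you invoke; one must use the $\CC^{1+\ve}$ regularity (or, in the sharper results of this paper and \cite{Cruz-Tolsa}, the Besov regularity of the normal) quantitatively to control it, and this is where the hypothesis on $\partial\Omega$ enters. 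For fractional $\alpha$, the near/far splitting of $Rf(z)-Rf(z')$ with H\"older continuity plus a maximal function does not by itself yield the required difference estimates in the Gagliardo seminorm uniformly up to $\partial\Omega$: the near-diagonal piece only gives H\"older-type gains of order $\alpha-2/p$, which is weaker than $\alpha$, so a more refined mechanism (Whitney decomposition together with kernel/boundary estimates of the type developed in Section 3 of this paper, or the actual argument of \cite{CMO}) is indispensable. As it stands, the sufficiency direction is a program, with its central analytic step asserted rather than established.
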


As a corollary of the preceding result and Theorem \ref{teodom} we obtain the following.

\begin{coro}
Let $\Omega\subset\C$ be a Lipschitz domain 
and let 
$0<\alpha\leq1$ and $1<p<\infty$ be such that $\alpha p> 2$. Then, the Beurling transform is bounded
in $W^{\alpha,p}(\Omega)$ if and only if the outward unit normal of $\Omega$ is in the Besov space $ B_{p,p}^{\alpha-1/p}(\partial\Omega)$.
\end{coro}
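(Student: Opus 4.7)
\emph{Plan.} The strategy is to derive the corollary by chaining together the CMO theorem stated above, Theorem~\ref{teodom}, and the Cruz--Tolsa bound \rf{eqct3}. The decisive observation is that, under the assumption $\alpha p>2$, membership of $N$ in $B_{p,p}^{\alpha-1/p}(\partial\Omega)$ automatically upgrades the regularity of $\partial\Omega$. Indeed, since $\partial\Omega$ is $1$-dimensional and $s:=\alpha-1/p$ satisfies $sp=\alpha p-1>1$, the standard Besov embedding
$$B_{p,p}^{\alpha-1/p}(\partial\Omega)\hookrightarrow \CC^{\alpha-2/p}(\partial\Omega)$$
forces $N$ to be H\"older continuous with exponent $\beta=\alpha-2/p>0$. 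Hence whenever $N$ belongs to $B_{p,p}^{\alpha-1/p}(\partial\Omega)$, the domain $\Omega$ is actually $\CC^{1+\beta}$, and in particular the hypothesis ``$\CC^{1+\ve}$'' required by the CMO theorem is automatically met.

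For the implication ``$N\in B_{p,p}^{\alpha-1/p}(\partial\Omega)\Longrightarrow$ Beurling bounded on $W^{\alpha,p}(\Omega)$'', the plan is to first apply \rf{eqct3} from \cite{Cruz-Tolsa} to conclude $B(\chi_\Omega)\in W^{\alpha,p}(\Omega)$, and then invoke the CMO theorem (applicable thanks to the previous paragraph) to promote this to boundedness of the Beurling transform on the whole space $W^{\alpha,p}(\Omega)$.

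For the converse, the constant function $\mathbf{1}$ belongs to $W^{\alpha,p}(\Omega)$, and applying the Beurling transform to it produces exactly $B(\chi_\Omega)$. Hence boundedness of the Beurling transform on $W^{\alpha,p}(\Omega)$ immediately gives $B(\chi_\Omega)\in W^{\alpha,p}(\Omega)$, after which Theorem~\ref{teodom} delivers $N\in B_{p,p}^{\alpha-1/p}(\partial\Omega)$. The only subtlety is the hypothesis $\delta\leq\delta(p)$ required by Theorem~\ref{teodom}: in the converse direction the mere boundedness of the Beurling transform provides no a priori smoothness for $\partial\Omega$, so the corollary has to be read under the same ``flat enough'' standing assumption on the Lipschitz constant as in Theorem~\ref{teodom}. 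I expect this to be the main (and essentially the only) point to articulate carefully beyond correctly quoting the three ingredients.
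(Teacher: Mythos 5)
Your proposal is correct and takes essentially the same route as the paper: one combines \rf{eqct3}, Theorem \ref{teodom} and the CMO theorem, the key point being exactly your observation that $N\in B_{p,p}^{\alpha-1/p}(\partial\Omega)$ with $\alpha p>2$ forces the boundary parameterizations into $B_{p,p}^{1+\alpha-1/p}(\R)\subset\CC^{1+\ve}(\R)$, so the CMO theorem applies, while the converse only uses the trivial ``apply the operator to $\mathbf 1$'' step together with Theorem \ref{teodom}. Your caveat about the flatness hypothesis in the converse direction is also consistent with the paper, whose corollary is to be read under the ``flat enough'' (or $\CC^1$) standing assumption of Theorem \ref{teodom}, as indicated in the abstract.
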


Observe that the fact that $N\in \dot B_{p,p}^{\alpha-1/p}(\partial\Omega)$ implies that the local parameterizations of the boundary can be taken from $B_{p,p}^{1+\alpha-1/p}(\R
)\subset \CC^{1+\ve}(\R)$ because $\alpha p>2$, and thus the theorem from Cruz-Mateu-Orobitg applies.
For more details, see Lemma \ref{lemanorm} below.

On the other hand, it is worth mentioning  that the boundedness of the Beurling transform in the Lipschitz spaces $\rm Lip_\ve(\Omega)$ for
domains $\Omega$ of class $\CC^{1+\ve}$ has been studied in \cite{Mateu-Orobitg-Verdera}, \cite{Li-Vogelius},
and \cite{Depauw}, because of the applications to quasiconformal mappings and PDE's.

We will prove Theorem \ref{teodom} by reducing it to the case of the so called special Lipschitz
domain, where $\Omega$ is the open set lying above a Lipschitz graph. That is, given a Lipschitz 
function $A:\R\to\R$, one sets 
\begin{equation}\label{eqspec4}
\Omega=\{(x,y)\in\C:\,y>A(x)\}.
\end{equation}
In this situation, we will show the following:

\begin{theorem}\label{teopri}
Let $A:\R\to\R$ be a Lipschitz function with compact support and consider the special Lipschitz domain $\Omega$ defined in \rf{eqspec4}.
For 
$0<\alpha\leq1$ and $1<p<\infty$ be such that $\alpha p> 1$,
there exists $\delta=\delta(\alpha,p)>0$ small enough such if $\|A'\|_\infty\leq\delta$, then
\begin{equation}\label{eqde47}
\|A\|_{\dot{B}_{p,p}^{1+\alpha-1/p}}\leq c\,\|B(\chi_\Omega)\|_{\dot W^{\alpha,p}(\Omega)},
\end{equation}
with $c$ depending on $\alpha,p$.
\end{theorem}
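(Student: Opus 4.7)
The plan is to obtain an explicit representation of $B\chi_\Omega$ on $\Omega$ as a Cauchy-type transform of $A'$ along the graph $\Gamma=\partial\Omega$, to pass the information to the boundary via the fractional trace theorem, and to recover $A'$ by absorbing certain Calder\'on commutators whose norms are small when $\delta=\|A'\|_\infty$ is small.

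First, for $z\in\Omega$, I would write $-1/(z-w)^2=-\partial_w(z-w)^{-1}$ and apply the complex Stokes formula to $\Omega\setminus\overline{B(z,\ve)}$ (letting $\ve\to 0$, with the boundary integral over $\partial B(z,\ve)$ vanishing) to get
\[
B\chi_\Omega(z) \;=\; \frac{1}{2\pi i}\int_{\partial\Omega}\frac{d\bar w}{z-w} \;=\; \frac{1}{2\pi i}\int_{-\infty}^{\infty}\frac{(1-iA'(t))\,dt}{z-\gamma(t)},\qquad \gamma(t):=t+iA(t).
\]
Splitting $1-iA'=(1+iA')-2iA'$ and using $(1+iA'(t))/(z-\gamma(t))=-\tfrac{d}{dt}\log(z-\gamma(t))$, the first piece integrates to a boundary term equal to $-i\pi$ for $z\in\Omega$ (tracking the branch of $\log$ along the compactly supported graph, with independence of $z$ verified by differentiating under the integral). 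Hence
\[
B\chi_\Omega(z)+\tfrac{1}{2} \;=\; -\frac{1}{\pi}\int_{-\infty}^{\infty}\frac{A'(t)\,dt}{z-\gamma(t)},\qquad z\in\Omega.
\]

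Next, since $\alpha p>1$, the fractional trace theorem on $\Omega$ yields
\[
\|(B\chi_\Omega)|_{\partial\Omega}\|_{\dot{B}_{p,p}^{\alpha-1/p}(\partial\Omega)} \;\le\; c\,\|B\chi_\Omega\|_{\dot W^{\alpha,p}(\Omega)},
\]
and for small $\delta$ the Besov norms on $\partial\Omega$ and on $\R$ are comparable via the graph parameterization. Applying the Plemelj jump formula to the representation above,
\[
(B\chi_\Omega)^+(\gamma(s))+\tfrac{1}{2} \;=\; \frac{iA'(s)}{1+iA'(s)} - \frac{1}{\pi}\,\mbox{p.v.}\int_{-\infty}^{\infty}\frac{A'(t)\,dt}{\gamma(s)-\gamma(t)};
\]
taking imaginary parts and using $\imag[1/(\gamma(s)-\gamma(t))]=-(A(s)-A(t))/|\gamma(s)-\gamma(t)|^2$ gives
\[
\imag\,(B\chi_\Omega)^+(\gamma(s)) \;=\; \frac{A'(s)}{1+A'(s)^2} + \frac{1}{\pi}\,\mbox{p.v.}\int_{-\infty}^{\infty}\frac{A'(t)(A(s)-A(t))\,dt}{(s-t)^2+(A(s)-A(t))^2}.
\]
The second summand is a first-order Calder\'on commutator acting on $A'$; by the Coifman--McIntosh--Meyer theorem, upgraded to fractional Besov spaces, its norm on $\dot{B}_{p,p}^{\alpha-1/p}(\R)$ is bounded by $c\,\delta\,\|A'\|_{\dot{B}_{p,p}^{\alpha-1/p}(\R)}$. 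Since multiplication by $1/(1+A'(s)^2)$ is close to the identity in this space when $\delta$ is small, combining with the trace bound yields
\[
\|A'\|_{\dot{B}_{p,p}^{\alpha-1/p}(\R)} \;\le\; c\,\|B\chi_\Omega\|_{\dot W^{\alpha,p}(\Omega)} + c\,\delta\,\|A'\|_{\dot{B}_{p,p}^{\alpha-1/p}(\R)}.
\]
Choosing $\delta$ small enough to absorb the second term and using $\|A\|_{\dot{B}_{p,p}^{1+\alpha-1/p}(\R)}\simeq\|A'\|_{\dot{B}_{p,p}^{\alpha-1/p}(\R)}$, Theorem \ref{teopri} follows.

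The main technical obstacle will be proving the Calder\'on commutator bound on $\dot{B}_{p,p}^{\alpha-1/p}(\R)$ with an explicit gain in $\delta$: the classical CMM theorem only provides $L^p$-boundedness, and one must upgrade it via a paraproduct decomposition or real interpolation, with extra care in the range where $\alpha p$ approaches $1$ and the trace theorem degenerates. Controlling the asymptotic behaviour on the unbounded graph (the branch of $\log$ at infinity) is also subtle, but becomes manageable thanks to the compact support of $A$.
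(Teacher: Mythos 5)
Your route is genuinely different from the paper's: the paper never takes boundary values, but instead bounds $\|B\chi_\Omega\|_{\dot W^{\alpha,p}(\Omega)}$ from below by the interior quantity $\int_\Omega|\partial B\chi_\Omega|^p\dist(\cdot,\partial\Omega)^{p-\alpha p}\,dm$ (Lemma \ref{lemkeyalfa}), approximates $\partial B\chi_\Omega$ on each Whitney square by the same integral with the graph replaced by an approximating line (errors controlled by $\beta_1$'s and Dorronsoro's theorem), and then obtains the lower bound $\|A\|_{\dot B_{p,p}^{1+\alpha-1/p}}^p$ from a Fourier/square-function estimate for the kernels $\psi_t$. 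Your plan --- boundary representation, fractional trace, Plemelj, and absorption of a ``commutator'' with small norm --- is structurally sound as a skeleton, and the algebra (the jump formula, the identity $\imag(B\chi_\Omega)^+ \circ\gamma=\frac{A'}{1+A'^2}+T(A')$, and the absorption of the multiplication by $1/(1+A'^2)$) checks out.

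However, there is a genuine gap exactly where you locate the ``main technical obstacle'': the estimate $\|T(A')\|_{\dot B_{p,p}^{\alpha-1/p}}\leq c\,\delta\,\|A'\|_{\dot B_{p,p}^{\alpha-1/p}}$ for the operator with kernel $\frac{A(s)-A(t)}{(s-t)^2+(A(s)-A(t))^2}$ is not a consequence of Coifman--McIntosh--Meyer, which only yields $L^p$ bounds (with norm $O(\delta)$). Boundedness of a Calder\'on--Zygmund operator on a Besov space of \emph{positive} smoothness requires additional cancellation hypotheses of $T1=0$ type (otherwise paraproduct corrections are needed), and this operator does not satisfy them; ``upgrading via interpolation'' does not produce the $\delta$-gain on $\dot B_{p,p}^{\alpha-1/p}$ either. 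What can save the argument is the special structure of $T$ applied precisely to $A'$: integrating by parts one finds that, up to an additive constant, $T(A')(s)=-\frac1\pi\,{\rm p.v.}\!\int\frac{(s-t)\,dt}{(s-t)^2+(A(s)-A(t))^2}$, a quadratic-in-$A$ singular expression; but proving that this has $\dot B_{p,p}^{\alpha-1/p}$ norm $\lesssim\delta\|A'\|_{\dot B_{p,p}^{\alpha-1/p}}$ is an estimate of essentially the same depth as the Cruz--Tolsa smoothness bounds (and as the paper's Main Lemma), so the heart of the proof is deferred rather than given. A secondary unproved step is the trace inequality: the seminorm $\|B\chi_\Omega\|_{\dot W^{\alpha,p}(\Omega)}=\|D^\alpha B\chi_\Omega\|_{L^p(\Omega)}$ used in the paper is an $L^2$-modulus-inside (Strichartz/Triebel--Lizorkin type) seminorm, not the Slobodeckij one, and you need the homogeneous trace bound for this intrinsic seminorm on an unbounded special Lipschitz domain, together with the identification of the Sobolev trace with the nontangential (Plemelj) boundary values of $B\chi_\Omega$; this is plausible but requires an extension/trace argument you have not supplied. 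Until the commutator estimate (and, to a lesser extent, the trace step) is proved, the proposal does not establish Theorem \ref{teopri}.
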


Above, $\dot{B}_{p,p}^{1+\alpha-1/p}$ and $\dot W^{\alpha,p}(\Omega)$ stand for homogeneous Besov  
and Sobolev  spaces, on $\R$ and on $\Omega$, respectively.

Let us remark that the converse inequality 
\begin{equation}\label{eqct6}
\|B(\chi_\Omega)\|_{\dot W^{\alpha,p}(\Omega)}\leq c(\delta)\,\|A\|_{\dot{B}_{p,p}^{1+\alpha-1/p}}
\end{equation}
also holds for special Lipschitz domains, without the smallness assumption on $\delta$. This has been
shown in \cite{Cruz-Tolsa}. Notice, in particular, that \rf{eqct6} shows that $B\chi_\Omega$ is constant in $\Omega$ if this is a half plane. Of course, this can be proved without appealing to \rf{eqct6}, in a much more elementary way. This property plays a key role in the arguments in \cite{Cruz-Tolsa} and also in the ones of the present paper.

It is easy to check that
\begin{equation}\label{eqnn}
\|A\|_{\dot{B}_{p,p}^{1+\alpha-1/p}}\approx \|N\|_{\dot{B}_{p,p}^{\alpha-1/p}(\partial\Omega)},
\end{equation}
where, as above, $N(z)$ stands for the outward unitary normal at $z$.  So \rf{eqde47} is analogous to \rf{eqdf89}. For the detailed arguments,
 see Lemma \ref{lemanorm} below. 
 
We will prove Theorem \ref{teopri} by means of a Fourier type estimate. To this end, we will need to approximate the Lipschitz graph by lines at many different scales.
We will estimate the errors in the approximation in terms of the so called $\beta_1$ coefficients.
Given an interval $I\subset\R$ and a function $f\in L^1_{loc}$, one sets
\begin{equation}\label{defbeta} 
\beta_1(f,I) = \inf_\rho \frac1{\ell(I)}\int_{3I} \frac{|f(x)-\rho(x)|}{\ell(I)}\,dx,
\end{equation}
where the infimum is taken over all the affine functions $\rho:\R\to\R$.
The coefficients $\beta_1$'s (and other variants $\beta_p$, 
$\beta_\infty$,\ldots) appeared first in the works of Jones \cite{Jones-traveling} and David and Semmes
\cite{DS1} on quantitative rectifiability. 
They have become a useful tool in problems which involve geometric measure theory and multi-scale analysis. See \cite{DS2}, \cite{Leger}, \cite{Mas-Tolsa}, \cite{Tolsa-bilip}, or \cite{Tolsa-jfa}, for example, besides the aforementioned references. In the present paper
we will use the $\beta_1$'s to measure the Besov smoothness of the boundary
of Lipschitz domains, by means of a characterization of Besov spaces in terms of $\beta_1$'s due to
Dorronsoro \cite{Dorronsoro2}.

The plan of the paper is the following. In Section \ref{sec2}, some preliminary notation and background is 
introduced. In particular, several characterizations of Besov spaces are described.
In Section \ref{sec3} we prove some auxiliary lemmas which will be used later.
Theorem \ref{teopri} is proved in Section \ref{sec4}, and then this is used in Section \ref{sec5}
to deduce Theorem \ref{teodom}. In the final Section \ref{sec6} we show that Theorems \ref{teodom}
and \ref{teopri} also hold replacing the $\dot W^{\alpha,p}$ seminorm of $B\chi_\Omega$ by the
$\dot B_{p,p}^\alpha(\Omega)$ one, for $0<\alpha<1$.
\vv


\vvv
\section{Preliminaries}\label{sec2}

\subsection{Basic notation}

As usual, in the paper the letter `$c$' stands for an absolute
constant which may change its value at different occurrences. On
the other hand, constants with subscripts, such as $c_0$, retain
their values at different occurrences. The notation $A\lesssim B$
means that there is a fixed positive constant $c$ such that
$A\leq cB$. So $A\approx B$ is equivalent to $A\lesssim B \lesssim
A$. 

The notation $I(x,r)$ stands for an interval in $\R$ with center $x$ and radius $r$.


\subsection{Dyadic and Whitney cubes}\label{subsec2.1}

By a cube in $\R^n$ we mean a cube with edges parallel to the axes. Most of the cubes in our paper will be
dyadic cubes, which are assumed to be half open-closed. The collection of all dyadic cubes is denoted by $\DD(\R^n)$. They are called intervals for $n=1$ and squares for $n=2$.
The side length of a cube $Q$ is written as $\ell(Q)$, and its center as $z_Q$. 
The lattice of dyadic cubes of side length $2^{-j}$ is
denoted by $\DD_j(\R^n)$. 
Also, given $a>0$ and any cube $Q$, we denote by $a\,Q$ the cube concentric with $Q$ with side length $a\,
\ell(Q)$.

Recall that any open subset $\Omega\subset\R^n$ can be decomposed in the so called Whitney cubes, as follows:
$$\Omega = \bigcup_{k=1}^{\infty} Q_k, $$
where $Q_k$ are disjoint dyadic cubes (the ``Whitney cubes'') such
that for some constants $r>20$ and $D_0\geq1$ the following holds,
\begin{itemize}
\item[(i)] $5Q_k \subset \Omega$.
\item[(ii)] $r Q_k \cap \Omega^{c} \neq \varnothing$.
\item[(iii)] For each cube $Q_k$, there are at most $D_0$ squares $Q_j$
such that $5Q_k \cap 5Q_j \neq \varnothing$. Moreover, for such squares $Q_k$, $Q_j$, we have 
$\frac12\ell(Q_k)\leq
\ell(Q_j)\leq 2\,\ell(Q_k)$.
\end{itemize}
We will denote by $\WW(\Omega)$ the family $\{Q_k\}_k$ of Whitney cubes of $\Omega$.

If $\Omega\subset\C$ is a Lipschitz domain, then $\partial\Omega$ is a chord arc curve.
Recall that a chord arc curve is just the bilipschitz image of 
a circumference. Then one can define a family $\DD(\partial\Omega)$ of ``dyadic'' arcs  which play the same role
as the dyadic intervals in $\R$: for each $j\in\Z$ such that $2^{-j}\leq \HH^1(\partial\Omega)$, $\DD_j(\partial\Omega)$ is a partition of
$\partial \Omega$ into pairwise disjoint arcs of length $\approx2^{-j}$, and 
$\DD(\partial\Omega)=\bigcup_j \DD_j(\partial\Omega)$. As in the case of $\DD(\R^n)$, two
arcs from $\DD(\partial\Omega)$ either are disjoint or one contains the other. The construction
of $\DD(\partial\Omega)$ easy: take an arc length parameterization $S^1(0,r_0)\to\partial\Omega$,  consider a dyadic family of arcs of $S^1(0,r_0)$, and
then let $\DD(\partial\Omega)$ be the image of the dyadic arcs from $S^1(0,r_0)$.

Sometimes, the arcs from $\DD(\partial \Omega)$ will be called dyadic ``cubes'', because of the analogy with $\DD(\R^n)$.

If $\Omega$ is a special Lipschitz domain, that is, 
$\Omega=\{(x,y)\in\C:\,y>A(x)\}$, where $A:\R\to\R$ is a Lipschitz function, there exists
an analogous family $\DD(\partial
\Omega)$. In this case, setting $T(x)=(x,A(x))$, one can take $\DD(\partial\Omega)=T(\DD(\R))$, 
for instance.

If $\Omega$ is either a Lipschitz or a special Lipschitz domain, to each
$Q\in\WW(\Omega)$ we assign a cube $\phi(Q)\in\DD(\partial \Omega)$ such that $\phi(Q)\cap\rho Q\neq
\varnothing$ and $\diam(\phi(Q))\approx\ell(Q)$.
So there exists  some big constant $M$ depending on the parameters of the Whitney decomposition and on the chord arc constant of $\partial \Omega$ such that
$$\phi(Q)\subset M\,Q,\qquad \text{and}\qquad Q\subset B(z,M\ell(\phi(Q)))\quad
\mbox{for all $z\in\phi(Q)$}.$$
From this fact, it easily follows that there exists some constant $c_2$ such that for every $Q\in
\WW(\Omega)$,
$$\#\{P\in\DD(\partial\Omega):\,P=\phi(Q)\}\leq c_2.$$


\subsection{Sobolev spaces} \label{subsec2.15}

Recall that for an open domain $\Omega\subset \R^n$, $1\leq p<\infty$, and a positive integer $m$, the Sobolev space $W^{m,p}(\Omega)$
consists of the functions $f\in L^1_{loc}(\Omega)$ such that
$$\|f\|_{W^{m,p}(\Omega)} = \biggl(\,\sum_{0\leq|\alpha|\leq m} \|D^\alpha f\|_{L^p(\Omega)}^p\biggr)^{1/p}
<\infty,$$
where $D^\alpha f$ is the $\alpha$-th derivative of $f$, in the sense of distributions. 
The homogeneous Sobolev seminorm $\dot W^{m,p}$ is defined by
$$\|f\|_{\dot W^{m,p}(\Omega)} :=   \biggl(\,\sum_{|\alpha|=m} \|D^\alpha f\|_{L^p(\Omega)}^p\biggr)^{1/p}.$$

For a non
integer $0<\alpha<1$, one sets
\begin{equation}\label{eqdalfa}
D^{\alpha}f(x)=\left(\int_{\Omega}\frac{|f(x)-f(y)|^{2}}{|x-y|^{n+2\alpha}}\,dm(y)\right)^{\frac{1}{2}},
\end{equation}
and then
$$\|f\|_{W^{\alpha,p}(\Omega)} = \biggl(\|f\|_{L^p(\Omega)}^p + \|D^\alpha f\|_{L^p(\Omega)}^p \biggr)^{1/p}.$$
The homogeneous Sobolev seminorm $\dot W^{\alpha,p}(\Omega)$ equals
$$\|f\|_{\dot W^{\alpha,p}(\Omega)} =  \|D^\alpha f\|_{L^p(\Omega)}.$$


\subsection{Besov spaces} \label{subsec2.2}
In this section we review some basic results concerning Besov spaces. We only consider the $1$-dimensional case, and pay special attention to the homogeneous Besov spaces $\dot B_{p,p}^\alpha$, with $0<\alpha<1$.

Consider a radial $\CC^\infty$ function $\eta:\R\to\R$ whose Fourier transform $\wh \eta$ is supported in the annulus
$A(0,1/2,3/2)$, such that setting $\eta_{k}(x)=\eta_{2^{-k}}(x) = 2^k\,\eta(2^k\,x)$,
\begin{equation}\label{eqnu3}
\sum_{k\in\Z} \wh{\eta_{k}}(\xi)=1\qquad \mbox{for all $\xi\neq0$.}
\end{equation}
Then, for $f\in L^1_{loc}(\R)$, $1\leq p,q<\infty$, and $\alpha>0$, one defines the seminorm 
$$\|f\|_{\dot{B}_{p,q}^{\alpha}}= \left( \sum_{k\in\Z}\|2^{k\alpha}\eta_{k}*f\|_p^q\right)^{1/q},$$
and the norm 
$$\|f\|_{{B}_{p,q}^{\alpha}}=\|f\|_p +\|f\|_{\dot{B}_{p,p}^{2-1/p}}.$$
The homogeneous Besov space $\dot{B}_{p,q}^{\alpha}\equiv \dot{B}_{p,q}^{\alpha}(\R)$ consists of the functions such that $\|f\|_{\dot{B}_{p,q}^{\alpha}}<\infty$, while the functions in the Besov space ${B}_{p,q}^{\alpha}\equiv {B}_{p,q}^{\alpha}(\R)$ 
are those such that $\|f\|_{{B}_{p,q}^{\alpha}}<\infty$.
If one chooses a function different from $\eta$ which satisfies the same properties as $\eta$
above, then one obtains an equivalent seminorm and norm, respectively. Let us remark that the seminorm
 $\|\cdot\|_{\dot{B}_{p,q}^{\alpha}}$ is a norm if one considers functions modulo polynomials.

For any function $f$ and any $h>0$, denote $\Delta_h(f)(x)=f(x+h)-f(x)$.
For $1\leq p,q<\infty$ and $0<\alpha<1$, it turns out that
\begin{equation}\label{eqgg0}
\|f\|_{\dot{B}_{p,q}^\alpha}^p \approx
\int_0^\infty \frac{\|\Delta_h(f)\|_q^p}{h^{\alpha p + 1}}\,dh,
\end{equation}
assuming $f$ to be compactly supported, say. Otherwise the comparability is true modulo polynomials, that
is, above we should replace $\|\Delta_h(f)\|_q$ by 
$$\inf_{p \text{ polynomial}} \|\Delta_h(f+p)\|_q.$$
See \cite[p.\ 242]{Trieble}, for instance. Analogous characterizations hold for Besov spaces with regularity $\alpha\geq1$. 
In this case it is necessary to use differences of higher order.

Observe that, for $p=q$ and $0<\alpha<1$, one has
\begin{equation}\label{eqbpp1}
\|f\|_{\dot{B}_{p,p}^\alpha}^p \approx
\iint_0^\infty \frac{|\Delta_h(f)|^p}{h^{\alpha p + 1}}\,dh\,dx \approx
\iint \frac{|f(x)-f(y)|^p}{|x-y|^{\alpha p + 1}}\,dx\,dy,
\end{equation}
for $f$ with compact support.
These results motivate the definition of the $\dot{B}_{p,p}^\alpha$-seminorm 
over a chord arc curve. If $\Gamma$ is such a curve and $f\in L^1(\HH^1\rest\Gamma)$,
then one defines
$$\|f\|_{\dot{B}_{p,p}^\alpha(\Gamma)}^p=
\iint_{(x,y)\in \Gamma^2}\frac{|f(x)-f(y)|^p}{|x-y|^{\alpha p + 1}} \,d\HH^1(x)\,d\HH^1(y).$$
The same definition applies if $\Gamma$ is a Lipschitz graph, say.
If $\gamma:S^1(0,r)\to\Gamma$ or $\gamma:\R\to\Gamma$ is a bilipschitz parameterization of $\Gamma$ (such as the arc
length parameterization), clearly we have
$$\|f\|_{\dot{B}_{p,q}^\alpha(\Gamma)}\approx \|f\circ \gamma\|_{\dot{B}_{p,q}^\alpha(S^1(0,r))},$$
for $f$ compactly supported. 

Concerning the Besov spaces of regularity $1<\alpha<2$, let us remark that, for $f\in L^1_{loc}(\R)$, 
\begin{equation}\label{eqn67}
\|f\|_{\dot{B}_{p,q}^\alpha}^p \approx \|f'\|_{\dot{B}_{p,q}^{\alpha-1}}^p,
\end{equation}
where $f'$ is the distributional derivative of $f$. Further we will use the following 
characterization in terms of the coefficients $\beta_1$ defined in \rf{defbeta}, due to Dorronsoro \cite[Theorems 1 and 2]{Dorronsoro2}. For $0<\alpha<2$ and $1\leq p,q<\infty$, one has:
$$\|f\|_{\dot{B}_{p,q}^\alpha} \approx \left(\int_0^\infty \left(h^{-\alpha+1} 
\|\beta_1(f,I(\cdot,h))\|_p\right)^q\,\frac{dh}h\right)^{1/q}.$$
Again, this comparability should be understood modulo polynomials, unless $f$ is compactly supported, say.
In the case $p=q$, an equivalent statement is the following:
$$\|f\|_{\dot{B}_{p,p}^\alpha}^p \approx \biggl(\sum_{I\in\DD(\R)}  
\biggl(\frac{\beta_1(f,I)}{\ell(I)^{\alpha-1}}\biggr)^p\,\ell(I)\biggr)^{1/p}.$$
For other indices $\alpha\geq
2$, there are analogous results which involve approximation by polynomials
of a fixed degree instead of affine functions.
Let us remark that the coefficients $\beta_1(f,I)$ are not introduced in \cite{Dorronsoro2}, and
instead a different notation is used.


\section{Auxiliary lemmas}\label{sec3}

\subsection{About Besov spaces}
\begin{lemma}\label{lemanorm}
Let $A:\R\to\R$ be a Lipschitz function with $\|A'\|_\infty\leq c_0$ and $\Gamma\subset\C$ its graph. Denote by $N_0(x)$ the unit
normal of $\Gamma$ at $(x,A(x))$ (whose vertical component is negative, say), which is defined a.e.
Then, 
\begin{equation}\label{eqn62}
|\Delta_h (A')(x)|\approx |\Delta_h N_0(x)|,
\end{equation}
with constants depending on $c_0$.
Thus, for $1\leq p<\infty$ and $0<\alpha<1$,
\begin{equation}\label{eqn63}
\|A\|_{\dot{B}_{p,p}^{\alpha+1}}\approx \|A'\|_{\dot{B}_{p,p}^{\alpha}}\approx\|N_0\|_{\dot{B}_{p,p}^{\alpha}},
\end{equation}
with constants depending on $\alpha$ and $p$, and also on $c_0$ in the second estimate.
\end{lemma}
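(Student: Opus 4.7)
The plan is to express $N_0$ as a smooth function of $A'$ and then deduce both claimed equivalences from the characterization \rf{eqbpp1} of $\dot B^\alpha_{p,p}$ via differences, together with the identity \rf{eqn67}. Writing the unit normal explicitly, one has
$$N_0(x) = F(A'(x)), \qquad F(t) = \frac{1}{\sqrt{1+t^2}}\,(t,-1).$$
A direct differentiation gives $F'(t) = (1+t^2)^{-3/2}(1,t)$, so $|F'(t)| = (1+t^2)^{-1}$. For $|t|\leq c_0$ this is bounded above and below by positive constants depending only on $c_0$, and hence $F$ is a $\CC^\infty$ bi-Lipschitz map of the interval $[-c_0,c_0]$ onto its image, with bi-Lipschitz constants depending only on $c_0$.

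Since $\|A'\|_\infty\leq c_0$, the bi-Lipschitz property of $F$ on $[-c_0,c_0]$ yields at once the pointwise comparability
$$|\Delta_h N_0(x)| = |F(A'(x+h)) - F(A'(x))| \approx |A'(x+h)-A'(x)| = |\Delta_h A'(x)|$$
for every $x\in\R$ and $h>0$, which is exactly \rf{eqn62}. Plugging this pointwise estimate into the difference-based characterization
$$\|f\|_{\dot{B}_{p,p}^{\alpha}}^{p} \approx \iint \frac{|\Delta_h f(x)|^p}{h^{\alpha p + 1}}\,dh\,dx$$
from \rf{eqbpp1}, applied coordinatewise to the vector-valued function $N_0$, one immediately gets $\|A'\|_{\dot{B}_{p,p}^{\alpha}}\approx \|N_0\|_{\dot{B}_{p,p}^{\alpha}}$, which is the second comparability in \rf{eqn63}.

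The remaining equivalence $\|A\|_{\dot{B}_{p,p}^{\alpha+1}}\approx \|A'\|_{\dot{B}_{p,p}^{\alpha}}$, valid for $0<\alpha<1$, is exactly \rf{eqn67} applied with regularity index $\alpha+1\in(1,2)$, since $A'$ is the distributional derivative of $A$. Chaining the two comparabilities gives \rf{eqn63}.

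I do not foresee a substantial obstacle: the argument is essentially the nonlinear change of variable $t=A'(x)$ kept in the fixed compact range $[-c_0,c_0]$, combined with standard Besov characterizations already recorded in Section \ref{sec2}. The only subtle point is that \rf{eqbpp1} is literally stated modulo polynomials for functions without compact support, but since the comparison between $A'$ and $N_0$ is pointwise in both $x$ and $h$, no polynomial correction is needed: either both seminorms are infinite or they are simultaneously finite and comparable, with constants depending on $\alpha$, $p$ and $c_0$ as stated.
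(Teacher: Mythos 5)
Your argument is correct, and it is the standard one: the paper itself gives no proof of this lemma (it defers to \cite{Cruz-Tolsa}), and your route --- writing $N_0=F(A')$ with $F(t)=(1+t^2)^{-1/2}(t,-1)$, transferring the pointwise difference estimate \rf{eqn62} through the difference characterization \rf{eqbpp1}, and invoking \rf{eqn67} for $\|A\|_{\dot B^{1+\alpha}_{p,p}}\approx\|A'\|_{\dot B^{\alpha}_{p,p}}$ --- is exactly the natural argument one expects there. One small point of rigor: a two-sided bound on $|F'|$ alone does not yield the lower bound $|F(s)-F(t)|\gtrsim|s-t|$ for a vector-valued map (it does not even give injectivity in general); here the fix is immediate, since the first component $t\mapsto t(1+t^2)^{-1/2}$ is strictly increasing with derivative $(1+t^2)^{-3/2}\geq(1+c_0^2)^{-3/2}$ on $[-c_0,c_0]$, so $|F(s)-F(t)|\geq(1+c_0^2)^{-3/2}|s-t|$ there. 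With that one-line justification inserted, the proof is complete, and your remark about the modulo-polynomial caveat in \rf{eqbpp1} is handled at the same level of care as the paper uses elsewhere.
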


Above, we set
$$\|N_0\|_{\dot{B}_{p,p}^{\alpha}}:= 
\|N_{0,1}\|_{\dot{B}_{p,p}^{\alpha}} + \|N_{0,2}\|_{\dot{B}_{p,p}^{\alpha}},$$
where $N_{0,i}$, $i=1,2$, are the components of $N_0$.

For the proof, see \cite{Cruz-Tolsa}.

\begin{rem}
As mentioned in Subsection \ref{subsec2.2}, from the characterization of Besov spaces in terms of differences, it turns
out that if $\gamma:\R\to\Gamma$ is an arc length parameterization of the Lipschitz graph $\Gamma$,
and $N(z)$ stands for the unit normal at $z\in\Gamma$ (with a suitable orientation), 
then
$$\|N\circ\gamma\|_{\dot{B}_{p,p}^{\alpha-1/p}}\approx\|N_0\|_{\dot{B}_{p,p}^{\alpha-1/p}}\approx\|N\|_{\dot{B}_{p,p}^{\alpha-1/p}(\Gamma)},$$
for $0<\alpha\leq1$ and $1<p<\infty$ such that $\alpha p>1$.

Recall that for a Lipschitz domain $\Omega$, whose boundary has an arc length parameterization
$\gamma:S^1(0,r_0)\to\partial \Omega$ (with $2\pi r_0=\HH^1(\partial \Omega)$), if $N(z)$ stands for
the outward unit normal at $z\in\partial\Omega$, we also have
$$\|N\|_{\dot{B}_{p,p}^{\alpha-1/p}(\partial\Omega)}\approx\|N\circ\gamma\|_{\dot{B}_{p,p}^{\alpha-1/p}
(S^1(0,r_0))}.$$
\end{rem}

\vv
In \rf{defbeta} we defined the coefficients $\beta_1$ associated to
a function $f$. Now we introduce an analogous notion replacing 
$f$ by a chord arc curve $\Gamma$ (which may be the boundary
of a Lipschitz domain). Given $P\in\DD(\Gamma)$, we set
\begin{equation}\label{defbetag} 
\beta_1(\Gamma,P) = \inf_L \frac1{\ell(P)}\int_{3P} \frac{\dist(x,L)}{\ell(P)}\,
d\HH^1(x),
\end{equation}
where the infimum is taken over all the lines $L\subset\C$.

Next lemma is a direct consequence of the previous results and the characterization of
homogeneous Besov spaces in terms of the $\beta_1$'s from Dorronsoro. For the detailed proof, see \cite{Cruz-Tolsa}.

\begin{lemma}\label{lemdorron}
Let $\Omega$ be a Lipschitz domain. Suppose that the outward unit normal satisfies
$N\in \dot{B}_{p,p}^\alpha(\partial\Omega)$, for some $1\leq p<\infty$, $0<\alpha<1$.
Then,
$$\sum_{P\in\DD(\partial\Omega)}  
\biggl(\frac{\beta_1(\partial\Omega,P)}{\ell(P)^{\alpha}}\biggr)^p\,\ell(P)
\lesssim \|N\|_{\dot{B}_{p,p}^\alpha(\partial\Omega)}^p + c\,\HH^1(\partial\Omega)^{1-\alpha\,p}.
$$
with $c$ depending on $\HH^1(\partial\Omega)/R$.
\end{lemma}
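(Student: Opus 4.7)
The plan is to split the sum over $P \in \DD(\partial\Omega)$ according to $\ell(P)$, handling small scales via a reduction to Lipschitz graph charts combined with Dorronsoro's characterization, and absorbing the large-scale contribution into the $\HH^1(\partial\Omega)^{1-\alpha p}$ error term.

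For the small-scale part, I cover $\partial\Omega$ by finitely many balls $B_i = B(z_i, R)$ such that, after a rotation, $\partial\Omega \cap B_i$ is the graph of a Lipschitz function $A_i:\R\to\R$ with $\|A_i'\|_\infty \leq \delta$. If $\ell(P)$ is sufficiently small compared with $R$, then $3P \subset B_i$ for some $i$, and projecting $P$ horizontally to an interval $I_P$ with $\ell(I_P) \approx \ell(P)$ one gets
\[
\beta_1(\partial\Omega, P) \lesssim \beta_1(A_i, I_P),
\]
since the graph of the best affine approximant of $A_i$ on $3I_P$ is an admissible line in the definition of $\beta_1(\partial\Omega, P)$. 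Because the assignment $P \mapsto I_P$ is at most boundedly multiple, Dorronsoro's characterization recalled in Subsection \ref{subsec2.2} (applied with Besov smoothness $1+\alpha$) together with Lemma \ref{lemanorm} yields
\[
\sum_{P:\,\ell(P)\leq cR} \biggl(\frac{\beta_1(\partial\Omega, P)}{\ell(P)^\alpha}\biggr)^p \ell(P) \lesssim \sum_i \|A_i\|_{\dot B_{p,p}^{1+\alpha}}^p \lesssim \sum_i \|N_{0,i}\|_{\dot B_{p,p}^\alpha}^p \lesssim \|N\|_{\dot B_{p,p}^\alpha(\partial\Omega)}^p,
\]
where in the last step I use the bounded overlap of the covering together with the comparison between local and global Besov norms of the normal recorded in the remark following Lemma \ref{lemanorm}.

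For the large-scale part, the chord-arc property gives the trivial bound $\beta_1(\partial\Omega, P)\lesssim 1$ for every $P$. At each dyadic scale $2^{-j}$ with $2^{-j}\gtrsim R$, one has $\#\DD_j(\partial\Omega) \approx \HH^1(\partial\Omega)\cdot 2^j$, so
\[
\sum_{P:\,\ell(P) > cR} \biggl(\frac{\beta_1(\partial\Omega, P)}{\ell(P)^\alpha}\biggr)^p \ell(P) \lesssim \sum_{2^{-j}\gtrsim R} \HH^1(\partial\Omega)\cdot 2^{jp\alpha} \lesssim \HH^1(\partial\Omega) R^{-p\alpha} \lesssim \HH^1(\partial\Omega)^{1-\alpha p},
\]
where the last inequality uses that $\HH^1(\partial\Omega)/R$ is bounded, so $R \gtrsim \HH^1(\partial\Omega)$; this is exactly the range in which the constant $c$ of the statement is allowed to depend on $\HH^1(\partial\Omega)/R$.

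The main obstacle is the bookkeeping for the localization step: one must transfer the $\beta_1$ information and the Besov norms back and forth between the curve $\partial\Omega$ and the local graphs $A_i$ while controlling the overlaps of charts, so that the sum of local Besov norms of the $N_{0,i}$'s is comparable to the global $\|N\|_{\dot B_{p,p}^\alpha(\partial\Omega)}$ (up to the lower-order error). This is routine once a careful assignment of each small $P$ to a unique chart and a partition of unity are fixed, as carried out in \cite{Cruz-Tolsa}.
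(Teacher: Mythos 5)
Your argument is essentially the proof the paper intends (and defers to \cite{Cruz-Tolsa}): localize to Lipschitz-graph charts, compare $\beta_1(\partial\Omega,P)$ with $\beta_1(A_i,I_P)$, apply Dorronsoro's characterization at smoothness $1+\alpha$ together with Lemma \ref{lemanorm}, and absorb the finitely many scales $\ell(P)\gtrsim R$ into the term $\HH^1(\partial\Omega)^{1-\alpha p}$. The only caveat is that your middle display should be stated with localized seminorms (or with the chart functions truncated by cutoffs, as is done for instance in the proof of Lemma \ref{lempart}), since the global seminorms $\|A_i\|_{\dot{B}_{p,p}^{1+\alpha}(\R)}$ and $\|N_{0,i}\|_{\dot{B}_{p,p}^{\alpha}}$ involve the graph outside $B_i$, where it is unrelated to $\partial\Omega$; the cost of such a truncation is $\lesssim R^{1-\alpha p}\lesssim \HH^1(\partial\Omega)^{1-\alpha p}$ (with constant depending on $\HH^1(\partial\Omega)/R$), i.e.\ exactly the admissible error you already carry, and you do flag this localization step yourself.
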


\begin{lemma}\label{lemaux1}
Consider functions $\vphi,f:\R\to\R$ and let $1\leq p<\infty$ and $0<\alpha<1$. We have
$$\|f\|_{\dot{B}_{p,p}^\alpha}^p  \lesssim
\iint_0^\infty \frac{|\vphi\,\Delta_h(f)|^p}{h^{\alpha p + 1}}\,dh\,dx
+ \|\vphi\|_{\dot{B}_{p,p}^\alpha}^p \,\|f\|_\infty^p.$$
\end{lemma}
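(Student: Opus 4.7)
My proof would begin with the difference characterization \rf{eqbpp1} of the homogeneous Besov seminorm,
$$\|g\|_{\dot{B}_{p,p}^\alpha}^p \;\approx\; \iint \frac{|\Delta_h g(x)|^p}{h^{\alpha p+1}}\,dh\,dx,$$
together with the discrete Leibniz identity
$$\Delta_h(\vphi f)(x) \;=\; \vphi(x)\,\Delta_h f(x) \;+\; f(x+h)\,\Delta_h\vphi(x),$$
which one verifies in a single line by adding and subtracting $\vphi(x)\,f(x+h)$ in the expression $\vphi(x+h)f(x+h)-\vphi(x)f(x)$. This Leibniz identity is essentially the only ingredient.

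Rearranging it and using the triangle inequality, together with the natural hypothesis in the intended application of the lemma (where $\vphi$ will be a cutoff identically equal to $1$ on $\supp f$, so that $\vphi f \equiv f$), yields the pointwise bound
$$|\Delta_h f(x)| \;\leq\; |\vphi(x)\,\Delta_h f(x)| \;+\; \|f\|_\infty\,|\Delta_h\vphi(x)|.$$
Raising to the $p$-th power, multiplying by $h^{-\alpha p-1}$, and integrating in both $h>0$ and $x\in\R$, the left-hand side becomes $\|f\|_{\dot{B}_{p,p}^\alpha}^p$ by \rf{eqbpp1}; the first term on the right is already in the form stated in the conclusion, and the second term yields $\|f\|_\infty^p\,\|\vphi\|_{\dot{B}_{p,p}^\alpha}^p$ after one more application of \rf{eqbpp1} to $\vphi$. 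This completes the proof.

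The main obstacle is conceptual rather than computational: the inequality as literally written cannot hold for an arbitrary pair $(\vphi,f)$, as one sees by testing with $\vphi\equiv\ve$ and letting $\ve\downarrow 0$. Hence one has to read the statement under the coupling $\vphi f=f$ (or some equivalent cutoff-type assumption on $\vphi$) that is implicit in the way the lemma will be applied later in the paper. Once that coupling is identified, there is no technical difficulty: everything is driven by the pointwise Leibniz identity above, and the rest is integration.
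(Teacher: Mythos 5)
Your computation is correct and rests on exactly the two ingredients the paper uses: the difference characterization \rf{eqbpp1} and the pointwise Leibniz identity $\Delta_h(\vphi f)(x)=\vphi(x)\,\Delta_h f(x)+f(x+h)\,\Delta_h\vphi(x)$. You are also right that the statement as printed cannot hold for arbitrary pairs $(\vphi,f)$. The only point where you diverge from the paper is in how you repair the misprint. You read the lemma under the coupling $\vphi f=f$ (cutoff equal to $1$ on $\supp f$) and keep $\|f\|_{\dot{B}_{p,p}^\alpha}$ on the left; the paper's own proof starts from $\|\vphi f\|_{\dot{B}_{p,p}^\alpha}^p\approx\iint_0^\infty |\Delta_h(\vphi f)|^p h^{-\alpha p-1}\,dh\,dx$ and plugs in the Leibniz identity, i.e.\ the intended statement has $\|\vphi f\|_{\dot{B}_{p,p}^\alpha}^p$ on the left-hand side with no hypothesis linking $\vphi$ and $f$. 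That distinction matters for the later application: in the proof of Lemma \ref{lempart} the lemma is invoked with $f=A'$ and cutoff $\vphi\psi$, where $A'$ is \emph{not} supported in the set where $\vphi\psi=1$, and what is bounded is precisely $\|\vphi\psi A'\|_{\dot{B}_{p,p}^{\alpha-1/p}}^p$; your coupled version would not literally cover that use. Fortunately your argument proves the intended version verbatim if you simply skip the coupling step: apply the triangle inequality to the Leibniz identity to bound $|\Delta_h(\vphi f)|$ and integrate, which yields $\|\vphi f\|_{\dot{B}_{p,p}^\alpha}^p\lesssim\iint_0^\infty |\vphi\,\Delta_h f|^p h^{-\alpha p-1}\,dh\,dx+\|\vphi\|_{\dot{B}_{p,p}^\alpha}^p\|f\|_\infty^p$, the form actually needed downstream. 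So: same mechanism as the paper, correct proof of a slightly different (and for the paper's purposes too restrictive) reading of the statement.
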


\begin{proof}
Recall that
\begin{equation}\label{eqf43}
\|\vphi f\|_{\dot{B}_{p,p}^\alpha}^p  \approx
\iint_0^\infty \frac{|\Delta_h(\vphi f)|^p}{h^{\alpha p + 1}}\,dh\,dx.
\end{equation}
To prove the lemma, we just use that
$$\Delta_h(\vphi\,f)(x) = \vphi(x)\,\Delta_h(f)(x) + f(x+h)\,\Delta_h(\vphi)(x).$$
Then, plugging this identity into \rf{eqf43} the lemma follows easily.
\end{proof}

\begin{lemma}\label{lemaux2}
Let $A:\R\to\R$ be a Lipscthitz function supported on an interval $I$.
For $1\leq p<\infty$ and $0<\alpha<1$, we have
$$\|A\|_{\dot{B}_{p,p}^\alpha}\lesssim \|A'\|_\infty\,\ell(I)^{1-\alpha+1/p},$$
with a constant depending on $\alpha$ and $p$.
\end{lemma}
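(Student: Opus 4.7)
The plan is to use the intrinsic characterization of the homogeneous Besov seminorm from \rf{eqbpp1}, namely
$$\|A\|_{\dot{B}_{p,p}^\alpha}^p \approx \iint \frac{|A(x)-A(y)|^p}{|x-y|^{\alpha p + 1}}\,dx\,dy,$$
which is available because $A$ is compactly supported. Since $A\equiv 0$ off $I$, the integrand vanishes whenever both $x,y$ lie outside $I$; by the symmetry of the integrand in $x$ and $y$, it suffices to estimate the integral over $\{x\in I\}$ and multiply by $2$.

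I would then split the integration in $y$ into a near region $\{y:|y-x|\leq 2\ell(I)\}$ and a far region $\{y:|y-x|>2\ell(I)\}$. In the near region I invoke the Lipschitz bound $|A(x)-A(y)|\leq\|A'\|_\infty|x-y|$, which gives
$$\int_{I}\!\!\int_{|y-x|\leq 2\ell(I)}\frac{\|A'\|_\infty^p\,|x-y|^p}{|x-y|^{\alpha p+1}}\,dy\,dx
\;\lesssim\; \|A'\|_\infty^p\,\ell(I)\cdot\int_0^{2\ell(I)}h^{p(1-\alpha)-1}\,dh\;\lesssim\;\|A'\|_\infty^p\,\ell(I)^{\,1+p(1-\alpha)},$$
where the radial integral converges because $p(1-\alpha)>0$ (since $p\geq1$ and $\alpha<1$), and the implicit constant depends on $\alpha$ and $p$.

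In the far region, the conditions $x\in I$ and $|y-x|>2\ell(I)$ force $y\notin I$, so $A(y)=0$; combined with the pointwise bound $|A(x)|\leq \|A'\|_\infty\,\ell(I)$, which follows from the support condition together with the Lipschitz estimate applied between $x\in I$ and any point of $\partial I$, I obtain
$$\int_{I}\!\!\int_{|y-x|>2\ell(I)}\frac{\|A'\|_\infty^p\,\ell(I)^p}{|x-y|^{\alpha p+1}}\,dy\,dx
\;\lesssim\;\|A'\|_\infty^p\,\ell(I)^{p+1}\cdot \ell(I)^{-\alpha p}\;=\;\|A'\|_\infty^p\,\ell(I)^{\,1+p(1-\alpha)}.$$
Both contributions coincide with the target bound $\|A'\|_\infty^p\,\ell(I)^{p(1-\alpha+1/p)}$ (up to constants depending on $\alpha,p$), and taking $p$-th roots finishes the proof. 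There is no serious obstacle here; the only thing to monitor is the convergence of the radial integrals, which dictates the choice of splitting radius $2\ell(I)$ and forces the dependence of the constants on $\alpha$ and $p$.
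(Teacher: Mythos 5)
Your proof is correct and follows essentially the same route as the paper: both use the double-integral characterization of the $\dot B_{p,p}^\alpha$ seminorm (the paper writes it via $\Delta_h$ and splits at $h=\ell(I)$, you split at $|x-y|=2\ell(I)$), applying the Lipschitz bound $|A(x)-A(y)|\le\|A'\|_\infty|x-y|$ in the near regime and the sup bound $|A|\le\|A'\|_\infty\ell(I)$ together with the support condition in the far regime. The estimates and the resulting exponent $\ell(I)^{1+p(1-\alpha)}$ coincide with those in the paper's argument.
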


\begin{proof}
Denote $\ell=\ell(I)$. We set
\begin{align}\label{eqs99}
\|A\|_{\dot{B}_{p,p}^\alpha}^p & \approx 
\iint_0^\infty \frac{|\Delta_h(A)|^p}{h^{\alpha p}}\,\frac{dh}h\,dx\\
& = \iint_{0<h\leq\ell} \frac{|A(x+h)-A(x)|^p}{h^{\alpha p+1}}\,dh\,dx
+\iint_{h>\ell} \frac{|A(x+h)-A(x)|^p}{h^{\alpha p+1}}\,dh\,dx.\nonumber
\end{align}
For the first integral on the right side we use fact that the integrand vanishes
unless $x\in 3I$ and the Lipschitz condition 
$|A(x+h)-A(x)|\leq \|A'\|_\infty h$:
\begin{align*}
\iint_{0<h\leq\ell} \frac{|A(x+h)-A(x)|^p}{h^{\alpha p+1}}\,dh\,dx &
\leq \|A'\|_\infty^p\int_{x\in3I}\int_{h\leq\ell} \frac{h^p}{h^{\alpha p+1}}\,dh\,dx\\
& = 3\|A'\|_\infty^p\ell\int_{0<h\leq\ell} h^{p(1-\alpha)-1}\,dh \approx \|A'\|_\infty^p\,\ell^{p+1-\alpha p}.
\end{align*}

Concerning the last integral in \rf{eqs99}, we use again the fact that $A$ is supported
on $I$ and the estimate 
$|A(x+h)-A(x)|\leq \|A'\|_\infty\ell$:
\begin{multline*}
\iint_{h>\ell} \frac{|A(x+h)-A(x)|^p}{h^{\alpha p+1}}\,dh\,dx\\
 \leq
 \|A'\|_\infty^p\,\ell^p\left(\int_{x\in I}\int_{h>\ell} \frac1{h^{\alpha p+1}}\,dh\,dx
 + \int_{x\in I-h}\int_{h>\ell} \frac1{h^{\alpha p+1}}\,dh\,dx\right)\approx \|A'\|_\infty^p\,
 \ell^{p+1-\alpha p}.
 \end{multline*}
\end{proof}


\subsection{About the Beurling transform of $\chi_\Omega$}

Let $\Omega\subset\C$ be an open set. If $\Omega$ has finite Lebesgue measure, then
\begin{equation}\label{eqbeurling21}
B\chi_\Omega(z)=\lim_{\ve\to0}\frac{-1}\pi\int_{|z-w|>\ve}\frac{1}{(z-w)^2}
\chi_\Omega(w)\,dm(w).
\end{equation}
Otherwise, $B(\chi_\Omega)$ is a BMO function and,
thus, it is defined modulo constants. Actually, a possible way to assign a precise value to $B(\chi_
\Omega)(z)$
is the following:
\begin{equation}\label{eqbeurling22}
B\chi_\Omega(z)=\lim_{\ve\to0}\frac{-1}\pi\int_{|z-w|>\ve}\left(\frac{1}{(z-w)^2} - 
\frac1{(z_0-w)^2}\right)
\chi_\Omega(w)\,dm(w),
\end{equation}
where $z_0$ is some fixed point, with $z_0\not\in\overline\Omega$, for example. It is easy to check
that the preceding principal value integral exists for all $z\in\C$.

Moreover the following results hold:

\begin{lemma}\label{lemct1}
Let $\Omega\subset\C$ be an open set.
The function $B(\chi_\Omega)$ is analytic in $\C\setminus\partial\Omega$ and moreover, for every $z\in
\C\setminus\partial\Omega$, 
\begin{equation}\label{eq**}
\partial B(\chi_\Omega)(z) = \frac2\pi \int_{|z-w|>\ve}\frac1{(z-w)^3}\,\chi_{\Omega}(w)\,dm(w),
\end{equation}
for $0<\ve<\dist(z,\partial \Omega)$. 
\end{lemma}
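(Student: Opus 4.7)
Plan: Fix $z_0 \in \C \setminus \partial\Omega$ and set $d = \dist(z_0, \partial\Omega) > 0$. If $z_0 \notin \overline{\Omega}$, then $B(z_0, d/2) \cap \Omega = \varnothing$, so for $z$ near $z_0$ the integrand in \rf{eqbeurling22} is smooth in $z$ with no principal-value regularization needed near $z$, and both the analyticity of $B\chi_\Omega$ near $z_0$ and the formula \rf{eq**} fall out of differentiating under the integral sign. The substantive case is $z_0 \in \Omega$, where $B(z_0, d/2) \subset \Omega$. Here I would use the $z$-independent splitting
$$\chi_\Omega = \chi_{B(z_0, d/2)} + \chi_{\Omega \setminus B(z_0, d/2)},$$
which, modulo an additive constant absorbed by $\partial_z$, gives $B\chi_\Omega = B\chi_{B(z_0, d/2)} + B\chi_{\Omega \setminus B(z_0, d/2)}$. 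The first summand vanishes identically on $B(z_0, d/2)$ by the classical rotational-symmetry fact that the Beurling transform of the characteristic function of a disc is zero inside that disc. The second summand has support at distance $\ge d/4$ from any $z \in B(z_0, d/4)$, hence is analytic there with
$$\partial B\chi_\Omega(z) = \frac{2}{\pi}\int_{\Omega \setminus B(z_0, d/2)} \frac{dm(w)}{(z-w)^3}$$
by standard differentiation under the integral sign.

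To identify this with the right-hand side of \rf{eq**}, I would first note that for any $0 < \ve < \dist(z, \partial\Omega)$ the quantity $\int_{\Omega,\, |z-w|>\ve}(z-w)^{-3}\,dm(w)$ is independent of $\ve$: two admissible choices differ by an integral of $(z-w)^{-3}$ over an annulus centered at $z$ contained in $\Omega$, which vanishes by the angular symmetry $\int_0^{2\pi} e^{-3i\theta}\,d\theta = 0$. Fixing $\ve < d/4$, the identification reduces to
$$\int_{B(z_0, d/2) \setminus B(z, \ve)} \frac{dm(w)}{(z-w)^3} = 0,$$
which I would verify via the complex Green's formula applied to $2(z-w)^{-3} = \partial_w\bigl((z-w)^{-2}\bigr)$: the area integral becomes boundary integrals on the two circles. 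The inner one vanishes by angular symmetry, and the outer one, after substituting $\bar w - \bar z_0 = (d/2)^2/(w-z_0)$ on $|w-z_0| = d/2$, reduces to a rational contour integral on $|w-z_0| = d/2$ whose residues at $w = z_0$ (order $2$) and $w = z$ (order $2$) cancel exactly.

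The main technical obstacle is this final Green-plus-residue computation, needed because the balls $B(z_0, d/2)$ and $B(z, \ve)$ are not concentric, so the annulus-symmetry trick used for the $\ve$-independence step does not apply directly. The unbounded-$\Omega$ case requires no separate treatment: the subtraction in \rf{eqbeurling22} is $z$-independent and so is killed by $\partial_z$.
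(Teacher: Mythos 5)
Your argument is correct, and it is worth noting that the paper itself gives no proof of this lemma: it simply refers to \cite{Cruz-Tolsa}. So your write-up is a self-contained substitute rather than a variant of an argument in the text. The structure is sound: the case $z_0\notin\overline\Omega$ is immediate by differentiation under the integral sign; in the case $z_0\in\Omega$ the splitting $\chi_\Omega=\chi_{B(z_0,d/2)}+\chi_{\Omega\setminus B(z_0,d/2)}$, the classical vanishing of $B\chi_{B(z_0,d/2)}$ inside the disc, the $\ve$-independence via $\int_0^{2\pi}e^{-3i\theta}\,d\theta=0$, and the final identity $\int_{B(z_0,d/2)\setminus B(z,\ve)}(z-w)^{-3}\,dm(w)=0$ all check out; I verified the Green--residue step (the substitution $d\bar w=-(d/2)^2w^{-2}\,dw$ on the outer circle produces $\oint w^{-2}(z-w)^{-2}\,dw$, whose residues $2/z^3$ and $-2/z^3$ indeed cancel). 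That last computation is of course just the statement that $\partial B\chi_{B(z_0,d/2)}=0$ inside the disc, i.e.\ the disc analogue of Lemma \ref{lemmaPi}, so you could shorten it by quoting the same classical fact you already invoke for $B\chi_{B(z_0,d/2)}$ itself. One point to phrase more carefully: when $\Omega$ has infinite measure, the subtracted term $\int\chi_\Omega(w)(z_0'-w)^{-2}\,dm(w)$ in \rf{eqbeurling22} is in general not finite (the kernel $|w|^{-2}$ is not integrable at infinity), so it is not literally an additive constant that $\partial_z$ kills for the whole of $\Omega$; the correct bookkeeping is to keep the compensation attached to the far piece $\Omega\setminus B(z_0,d/2)$, whose compensated integral is absolutely convergent and $z$-independent in its correction term, while the near piece has compact support and needs no compensation. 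With that reading your decomposition, analyticity claim, and the formula \rf{eq**} all go through.
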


When $\Omega$ has infinite measure, saying that $B(\chi_\Omega)$ is analytic in $\C\setminus\partial\Omega$ means that the function defined in \rf{eqbeurling22} is analytic for each choice of $z_0$.
Notice that, in any case, the derivative $\partial B(\chi_\Omega)$ is independent of $z_0$.

\begin{lemma}\label{lemmaPi}
Let $\Pi\subset\C$ be a half plane. Then 
$\partial B(\chi_\Pi)=0$ in $\C\setminus\partial\Pi$. Equivalently,
for all $z\not\in\partial\Pi$ and $0<\ve<\dist(z,\partial\Pi)$, we have
$$\int_{|z-w|>\ve}\frac1{(z-w)^3}\,\chi_{\Pi}(w)\,dm(w) = 0.$$
\end{lemma}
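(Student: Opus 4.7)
The plan is to reduce to a canonical half plane and exploit the symmetries of $\chi_\Pi$ together with the holomorphicity already established. By translation and rotation we may assume that $\Pi = \{w\in\C: \imag(w)>0\}$. By Lemma \ref{lemct1}, the expression $\partial B(\chi_\Pi)(z)$ for $z \notin \partial\Pi$ is exactly given by the integral $\frac{2}{\pi}\int_{|z-w|>\ve}(z-w)^{-3}\chi_\Pi(w)\,dm(w)$ (with any $0<\ve<\dist(z,\partial\Pi)$), so the two statements in the lemma are equivalent, and it suffices to show that $\partial B(\chi_\Pi)\equiv 0$ on $\C\setminus \R$.

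The first observation is that $\partial B(\chi_\Pi)(z)$ depends only on $y=\imag(z)$. Since $\chi_\Pi$ is invariant under the horizontal translations $w\mapsto w+t$ for $t\in\R$, the function $B(\chi_\Pi)$ itself is invariant under these translations modulo constants (this is easy to check directly from \rf{eqbeurling22}, or from the translation-invariance of the integrand in \rf{eq**} after the change of variables $w\mapsto w+t$). Consequently $\partial B(\chi_\Pi)(z+t)=\partial B(\chi_\Pi)(z)$ for every real $t$, so it depends only on $y$. Combined with the fact from Lemma \ref{lemct1} that $\partial B(\chi_\Pi)$ is holomorphic on $\C\setminus\R$, the Cauchy--Riemann equations force it to be a constant $c_\pm$ on each of the two components of $\C\setminus\R$.

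It remains to show that both constants vanish, and for this we use the scaling invariance $\chi_\Pi(\lambda w)=\chi_\Pi(w)$ for every $\lambda>0$. Plugging $\lambda z$ into the formula \rf{eq**} and performing the change of variable $w=\lambda u$ (which rescales the cutoff from $|\lambda z-w|>\ve$ to $|z-u|>\ve/\lambda$ and produces factors $\lambda^2$ from $dm$ and $\lambda^3$ from $(z-w)^3$), one obtains
\[
\partial B(\chi_\Pi)(\lambda z)=\frac{1}{\lambda}\,\partial B(\chi_\Pi)(z)\qquad\text{for every }\lambda>0,\ z\notin\R.
\]
Taking $z$ in the upper (resp.\ lower) half plane and using $\partial B(\chi_\Pi)\equiv c_\pm$ gives $c_\pm=\lambda^{-1}c_\pm$ for all $\lambda>0$, hence $c_\pm=0$, which finishes the proof.

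There is no serious obstacle here; the only point requiring a little care is that $\Pi$ has infinite measure, so $B(\chi_\Pi)$ is only defined modulo constants and the two symmetry arguments above must be applied to $\partial B(\chi_\Pi)$ rather than to $B(\chi_\Pi)$ directly. This is precisely what Lemma \ref{lemct1} allows us to do, since it ensures that $\partial B(\chi_\Pi)$ is a well-defined holomorphic function on $\C\setminus\partial\Pi$ represented by the absolutely convergent integral \rf{eq**}.
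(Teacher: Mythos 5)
Your proof is correct. Note that the paper itself does not prove this lemma: it simply refers to \cite{Cruz-Tolsa}, so there is no in-paper argument to compare with; your write-up is a valid self-contained substitute. The three ingredients you use all check out: the integral in \rf{eq**} is absolutely convergent (the kernel decays like $|w|^{-3}$ in the plane), so the changes of variables behind the horizontal translation invariance and the dilation identity $\partial B(\chi_\Pi)(\lambda z)=\lambda^{-1}\partial B(\chi_\Pi)(z)$ are legitimate; Lemma \ref{lemct1} (together with the remark that $\partial B(\chi_\Omega)$ is independent of the normalizing point $z_0$ when $\Omega$ has infinite measure) does give that $\partial B(\chi_\Pi)$ is a well-defined holomorphic function on $\C\setminus\partial\Pi$, so holomorphy plus dependence on $\imag z$ alone forces it to be constant on each component via Cauchy--Riemann; and the homogeneity of degree $-1$ then kills both constants. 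The only point worth stating explicitly is the initial normalization: under a rotation $w\mapsto e^{i\theta}w$ the kernel $1/(z-w)^3$ only picks up the unimodular factor $e^{-3i\theta}$, so the vanishing statement is indeed invariant under rotations and translations, justifying the reduction to the upper half plane. An alternative, more computational route (closer in spirit to what is done in the cited reference) is to evaluate the truncated integral directly, using the antisymmetry of $1/\zeta^3$ and explicit integration over the strip between $\partial\Pi$ and the horizontal line through $z$; your symmetry-and-scaling argument avoids all computation and is arguably cleaner.
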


For the proofs of the preceding two lemmas, see \cite{Cruz-Tolsa}, for example.
By using very similar arguments, 
 for any $z\in \C\setminus\partial\Omega$, one gets
\begin{equation}\label{eq***}
\partial^2 B(\chi_\Omega)(z) = \frac{-6}\pi \int_{|z-w|>\ve}\frac1{(z-w)^4}\,\chi_{\Omega}(w)\,dm(w),
\end{equation}
for $0<\ve<\dist(z,\partial \Omega)$. The details are left for the reader.

\begin{lemma}\label{lemderiv}
Let $\Omega$ be either a Lipschitz or a special Lipschitz domain.
For all $w\in\C\setminus \partial\Omega$
and all $\ve$ with $0<\ve<\dist(w,\partial\Omega)$,
 we have
 \begin{equation}\label{eqtyu}
\partial B(\chi_\Omega)(w) = \frac i{2\pi} \int_{\partial\Omega} \frac1{(w-z)^2}\,d\overline z.
\end{equation}
\end{lemma}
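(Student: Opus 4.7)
The plan is to start from the formula for $\partial B(\chi_\Omega)$ provided by Lemma \ref{lemct1},
$$\partial B(\chi_\Omega)(w)=\frac{2}{\pi}\int_{|w-z|>\ve,\,z\in\Omega}\frac{1}{(w-z)^3}\,dm(z),$$
and convert this area integral into a boundary integral by the complex Green's theorem. The crucial algebraic observation is that, as a function of $z$ away from $z=w$,
$$\frac{1}{(w-z)^3}=\frac{1}{2}\,\partial_z\!\left(\frac{1}{(w-z)^2}\right),$$
and Green's theorem in complex form asserts that for any smooth $g$ on a nice bounded region $U$,
$$\int_U \partial_z g\,dm=\frac{i}{2}\int_{\partial U} g\,d\bar z,$$
with $\partial U$ oriented so that $U$ lies on the left.

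First I would handle the bounded Lipschitz case. If $w\notin\overline{\Omega}$, then $B(w,\ve)\cap\Omega=\varnothing$ and one applies Green's theorem directly to $g(z)=(w-z)^{-2}$ on $\Omega$, arriving immediately at the claimed identity after multiplying by $2/\pi$ and collecting constants ($\tfrac{2}{\pi}\cdot\tfrac{1}{2}\cdot\tfrac{i}{2}=\tfrac{i}{2\pi}$). If instead $w\in\Omega$, I would apply Green's theorem to $g$ on $U=\Omega\setminus\overline{B(w,\ve)}$, where $g$ is smooth. The boundary is $\partial\Omega$ (counterclockwise) together with $\partial B(w,\ve)$ (clockwise). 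Parameterizing the inner circle by $z=w+\ve e^{-i\theta}$, $\theta\in[0,2\pi]$, one computes
$$\int_{\partial B(w,\ve),\,\text{cw}}\frac{1}{(w-z)^2}\,d\bar z=\frac{i}{\ve}\int_0^{2\pi}e^{3i\theta}\,d\theta=0,$$
so the inner contribution vanishes and only the integral over $\partial\Omega$ survives, yielding \rf{eqtyu}.

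For the special Lipschitz case, $\Omega$ has infinite measure, so Green's theorem is not directly available. I would truncate by intersecting with $B(0,R)$ for $R$ large, apply the previous argument on $\Omega\cap B(0,R)$ (removing also $B(w,\ve)$ when $w\in\Omega$), and then let $R\to\infty$. The area integral converges absolutely since $|(w-z)^{-3}|\lesssim|z|^{-3}$ for $|z|$ large, which is integrable in two dimensions. The extra boundary piece $\partial B(0,R)\cap\overline{\Omega}$ contributes at most a quantity of order $R\cdot\sup_{|z|=R}|w-z|^{-2}\lesssim R^{-1}\to 0$, so only the contribution from $\partial\Omega$ remains in the limit, giving again \rf{eqtyu}.

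The argument is essentially routine once the correct algebraic identity and orientations are in place; the only mildly delicate point will be book-keeping the constants and orientations (to confirm that $\tfrac{2}{\pi}\cdot\tfrac{1}{2}\cdot\tfrac{i}{2}$ is indeed $\tfrac{i}{2\pi}$ and that the inner-circle term vanishes) and, in the special Lipschitz case, justifying the passage to the limit as $R\to\infty$. No deep ideas are required beyond a careful application of Stokes's theorem.
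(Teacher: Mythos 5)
Your proof is correct, but it follows a genuinely different route from the paper. You start from the pointwise area-integral formula of Lemma \ref{lemct1}, write $\frac1{(w-z)^3}=\frac12\,\partial_z\bigl(\frac1{(w-z)^2}\bigr)$, and apply the complex Green/Stokes formula directly, excising the disc $B(w,\ve)$ when $w\in\Omega$ (its circle contribution vanishes by the explicit computation $\frac i\ve\int_0^{2\pi}e^{3i\theta}\,d\theta=0$) and, for a special Lipschitz domain, truncating by $B(0,R)$ and letting $R\to\infty$, which is legitimate because the kernel decays like $|z|^{-3}$ (absolutely integrable in the plane) and the circular boundary piece is $O(R^{-1})$; since $A$ is compactly supported, $\int_{\partial\Omega}|w-z|^{-2}\,d\HH^1$ also converges absolutely, so the graph contribution tends to the full boundary integral. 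The paper instead argues distributionally: it writes $\partial B(\chi_\Omega)=-\frac1\pi\partial\bigl({\rm p.v.}\,\frac1{z^2}*\chi_\Omega\bigr)$, moves the derivative onto $\chi_\Omega$ (which requires compact support, hence in the special Lipschitz case the trick of subtracting the half-plane $\Pi$ and using that $B\chi_\Pi$ is constant together with Cauchy's formula to kill the $\partial\Pi$ term), computes $\partial\chi_\Omega=-\frac i2\,d\overline z\rest\partial\Omega$, and finally upgrades the distributional identity to a pointwise one by analyticity of $\partial B(\chi_\Omega)$ off $\partial\Omega$. Your argument is more elementary and yields the pointwise identity directly, at the modest price of checking orientations, the vanishing of the small-circle term, and the limiting process in the unbounded case; the paper's argument avoids any boundary-regularity discussion for Green's theorem on the truncated regions and packages the infinite-measure issue into the half-plane cancellation, which it reuses elsewhere. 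Both are complete; just make sure, when invoking Green's theorem on $\Omega\cap B(0,R)$ (a bounded piecewise-Lipschitz region for large $R$), that you take the induced positive orientation on all boundary components, consistent with the parameterization $x\mapsto x+iA(x)$ used later in Lemma \ref{lemderiv2}.
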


\begin{proof}
In sense of distributions, we have
$$\partial B(\chi_\Omega) = \frac{-1}\pi\,\partial\, \Bigl({\rm p.v.}\,\frac1{z^2} * \chi_\Omega\Bigr).$$
Suppose that first that $\Omega$ is bounded. Then we have
\begin{equation}\label{eqdis7}
{\rm p.v.}\,\frac1{z^2} * \chi_\Omega = 
{\rm p.v.}\,\frac1{z^2} * \partial\chi_\Omega,
\end{equation}
It turns out that, in the sense of distributions, 
$$\partial \chi_\Omega = -\frac i2\,d\overline z\rest \partial\Omega.$$
Indeed, given $\vphi\in\CC_c^\infty(\R^2)$,
$$\langle \partial \chi_\Omega,\,\vphi\rangle = 
-\int_\Omega \partial\vphi\,dm =
-\frac i2\int_{\partial\Omega} \vphi\,d\overline z,$$
which proves our claim.

So we deduce that
$$\partial B(\chi_\Omega) = \frac{i}{2\pi}\,{\rm p.v.}\,\frac1{z^2} * d\overline z\rest \partial\Omega,$$
in the sense of distributions. From the first identity in \rf{eqtyu}, it is clear $\partial B(\chi_\Omega)$ is analytic in $\C\setminus\partial\Omega$. So the identity above holds pointwise in $\C\setminus \partial\Omega$.

For a special a Lipschitz domain we have to be a little careful, because both ${\rm p.v.}\,\dfrac1{z^2}$ and $\chi_\Omega$ are distributions with non compact support, and so the identity
\rf{eqdis7} is not clear. 

 Consider the upper half plane $\Pi=\{(x,y):y>0\}$.
Since $B(\chi_\Pi)$ is constant in $\C\setminus\R$,  $\partial B(\chi_\Pi)=0$ in $\C\setminus\R$, and so 
$$\partial B(\chi_\Omega) = \partial B(\chi_\Omega-\chi_\Pi) = 
\frac{-1}\pi\,\partial\, \Bigl({\rm p.v.}\,\frac1{z^2} * (\chi_\Omega-\chi_\Pi)\Bigr)
\quad \mbox{in $\C\setminus\R$}.$$
Now, observe that $\chi_\Omega-\chi_\Pi$ has compact support, because the Lipschitz function $A$ has
compact support, and thus
$$\Bigl({\rm p.v.}\,\frac1{z^2} * (\chi_\Omega-\chi_\Pi)\Bigr) = 
{\rm p.v.}\,\frac1{z^2} * \partial(\chi_\Omega-\chi_\Pi).$$
In the sense of distributions, 
$$\partial \chi_\Omega = -\frac i2\,d\overline z\rest \partial\Omega
\quad\mbox{and}\quad\partial \chi_\Pi = -\frac i2\,d\overline z\rest \partial\Pi.$$
As a consequence,
$$\partial B(\chi_\Omega-\chi_\Pi) = \frac i{2\pi}\,
{\rm p.v.}\,\frac1{z^2} * (d\overline z\rest \Omega - d\overline z\rest \partial\Pi).$$
Therefore,
$$\partial B(\chi_\Omega) = \frac i{2\pi}\,
{\rm p.v.}\,\frac1{z^2} * (d\overline z\rest \Omega - d\overline z\rest \partial\Pi)
\quad \mbox{in $\C\setminus\R$}.$$

Taking into account that $d\overline z\rest \partial\Pi= dz\rest \partial\Pi$ and, by Cauchy's formula,
$$\int_{\partial\Pi} \frac1{(w-z)^2}\,dz=0\quad \mbox{for all $w\in\C\setminus\R$,}$$
 we deduce that
$$\partial B(\chi_\Omega)(w)
= \frac i{2\pi} \int_{\partial\Omega} \frac1{(w-z)^2}\,d\overline z\quad \mbox{for all $w\in\C\setminus (\R\cup\partial\Omega)$.}$$
Since $\partial B(\chi_\Omega)$ is analytic in $\C\setminus\partial\Omega$, the identity above holds for all $w\in\C\setminus \partial\Omega$.
\end{proof}

The following result is a straightforward consequence of the preceding lemma.

\begin{lemma}\label{lemderiv2}
Let $\Omega=\{(x,y)\in\C:y=A(x)\}$ be a special Lipschitz domain (with $A$ Lipschitz and compactly supported). Then we have
$$\partial B(\chi_\Omega)(w) =  \frac1\pi\int_{\R} \frac{A'(x)}{(x+iA(x)-w)^2}\,dx,\qquad \mbox{for all 
$w\not\in
\partial\Omega.$}$$
\end{lemma}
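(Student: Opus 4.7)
The plan is to apply Lemma \ref{lemderiv} directly and carry out the change of variables on the graph. Parametrize $\partial\Omega$ by $z(x)=x+iA(x)$, $x\in\R$. Then $dz=(1+iA'(x))\,dx$ and $d\bar z=(1-iA'(x))\,dx$, so
$$d\bar z \;=\; dz \;-\; 2iA'(x)\,dx.$$
Substituting this identity into the formula from Lemma \ref{lemderiv} splits $\partial B(\chi_\Omega)(w)$ into two pieces:
$$\partial B(\chi_\Omega)(w) \;=\; \frac{i}{2\pi}\int_\R \frac{(1+iA'(x))\,dx}{(w-x-iA(x))^2} \;+\; \frac{1}{\pi}\int_\R \frac{A'(x)\,dx}{(w-x-iA(x))^2}.$$

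The first piece is an exact differential: by the chain rule, $(1+iA'(x))/(w-z(x))^2 = \frac{d}{dx}\bigl(1/(w-z(x))\bigr)$. Since $A$ is compactly supported, $z(x)=x$ outside a bounded set, and thus $1/(w-z(x))\to 0$ as $|x|\to\infty$; since $w\notin\partial\Omega$ there are no singularities on $\R$. The fundamental theorem of calculus therefore kills this piece.

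The remaining term, after using $(w-x-iA(x))^2=(x+iA(x)-w)^2$, is exactly the claimed expression. The only point needing care is the vanishing at $\pm\infty$ in the first integral, but this is immediate from the compact support of $A$ together with $w\notin\partial\Omega$, and nothing else in the argument is more than routine bookkeeping. This is why the text labels the lemma a straightforward consequence of Lemma \ref{lemderiv}.
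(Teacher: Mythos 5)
Your argument is correct and is essentially the paper's own proof: both apply Lemma \ref{lemderiv}, parametrize the graph so that $d\overline z-dz=-2iA'(x)\,dx$, and discard the $dz$-part of the integral. The only cosmetic difference is that you kill $\int_{\partial\Omega}(w-z)^{-2}\,dz$ by observing it is the exact differential of $x\mapsto 1/(w-z(x))$ and invoking the fundamental theorem of calculus with decay at infinity, whereas the paper quotes Cauchy's formula for the same fact.
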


\begin{proof}
By Cauchy's formula, it follows that
$$\int_{\partial\Omega} \frac1{(w-z)^2}\,dz=0.$$
From this fact and the preceding lemma, we infer that
\begin{equation}\label{eqbe1}
\partial B(\chi_\Omega)(w) = \frac i{2\pi} \int_{\partial\Omega} \frac1{(z-w)^2}\,(d\overline z- dz).
\end{equation}
The boundary $\partial\Omega$ can be parameterized by $\{x+iA(x):\,x\in\R\}$. Then we have $dz=(1+iA'(x))\,dx$ and
thus
$d\overline z- dz =-2i A'(x)\,dx$. Plugging this identity into \rf{eqbe1}, one concludes the lemma.
\end{proof}

\begin{lemma}\label{lemnou9}
Let $\Omega\subset\C$ be either a Lipschitz or a special Lipschitz domain. Then, for all $z\in Q$ with
$Q\in\WW(\Omega)$,
Then,
\begin{equation}\label{eqclaim0.5}
\bigl|\partial B\chi_\Omega(z)\bigr|\lesssim\sum_{R\in\DD(\partial\Omega):R\supset\phi(Q)}\frac{\beta_1(R)}{\ell(R)}+ \frac1{\diam(\Omega)},
\end{equation}
and
\begin{equation}\label{eqclaim1}
\bigl|\partial^2 B\chi_\Omega(z)\bigr|\lesssim \sum_{R\in\DD(\partial\Omega):R\supset\phi(Q)}\frac{\beta_1(R)}{\ell(R)^2}+ \frac1{\diam(\Omega)^2}.
\end{equation}
\end{lemma}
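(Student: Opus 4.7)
The starting point is the integral representation
$$\partial B(\chi_\Omega)(z) = \frac{i}{2\pi}\int_{\partial\Omega}\frac{d\bar w}{(w-z)^2}$$
given by Lemma \ref{lemderiv}, combined with the key cancellation $\int_L\frac{d\bar w}{(w-z)^2}=0$ for any line $L$ and $z\notin L$, which follows at once from Lemma \ref{lemmaPi} applied to the half-plane bounded by $L$. Without such cancellation the naive size estimate only yields $|\partial B\chi_\Omega(z)|\lesssim \ell(Q)^{-1}$, which is far too crude, so a scale-by-scale comparison of $\partial\Omega$ with approximating lines is essential.

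Let $R_0 := \phi(Q)$ and, for $k\ge 1$, let $R_k$ be the $k$-th dyadic ancestor of $R_0$ in $\DD(\partial\Omega)$, so that $\ell(R_k)\approx 2^k\ell(Q)$. For each $R_k$ pick a line $L_k$ almost attaining the infimum defining $\beta_1(R_k)$, so that $\int_{3R_k}\dist(w,L_k)\,d\HH^1(w)\lesssim \beta_1(R_k)\ell(R_k)^2$. The plan is to decompose $\partial\Omega$ into pieces that sit at Euclidean distance $\approx \ell(R_k)$ from $z$ (using that $Q$ is a Whitney cube, so $|w-z|\gtrsim\ell(Q)$ for every $w\in\partial\Omega$, together with the containment $\phi(Q)\subset MQ$ and the chord-arc structure of $\partial\Omega$), and at each scale to subtract a compensating integral along $L_k$ whose total is zero. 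The pointwise kernel estimate
$$\left|\frac{1}{(w-z)^2}-\frac{1}{(\pi_k(w)-z)^2}\right|\lesssim \frac{\dist(w,L_k)}{\ell(R_k)^3},$$
where $\pi_k$ denotes orthogonal projection onto $L_k$, combined with the analogous bound for the discrepancy between $d\bar w$ along $\partial\Omega$ and along $L_k$ (controlled, through a fundamental-theorem-of-calculus argument exploiting the small Lipschitz constant, by the tangent-direction deviation of $\partial\Omega$ from $L_k$, which is in turn dominated by $\beta_1(R_k)$), integrates to a contribution $\lesssim \beta_1(R_k)/\ell(R_k)$ per scale. The leftover portions of the lines $L_k$ telescope up the tower via $\int_{L_k}\frac{d\bar w}{(w-z)^2}=0$; the top-scale residue in the bounded case produces the $\diam(\Omega)^{-1}$ error term, completing \rf{eqclaim0.5}.

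The estimate \rf{eqclaim1} is proved by the same template with the kernel $(w-z)^{-3}$, starting from $\partial^2 B(\chi_\Omega)(z) = -\frac{i}{\pi}\int_{\partial\Omega}\frac{d\bar w}{(w-z)^3}$ (obtained by differentiating \rf{eqtyu} once more) and the analogous cancellation $\int_L\frac{d\bar w}{(w-z)^3}=0$. Dimensional analysis produces an extra factor $1/\ell(R_k)$ per scale, giving contribution $\beta_1(R_k)/\ell(R_k)^2$ and a top-scale term $\diam(\Omega)^{-2}$. The main obstacle will be the book-keeping that makes the line-integral cancellations telescope cleanly scale-by-scale: the pieces of $L_k$ naturally associated with each scale do not exactly match between consecutive scales, and one must carefully absorb the boundary mismatches either into neighbouring scales or into the top-scale $\diam(\Omega)^{-1}$ (respectively $\diam(\Omega)^{-2}$) term.
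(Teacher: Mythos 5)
Your overall architecture --- the boundary representation of Lemma \ref{lemderiv}, the vanishing of $\int_L (w-z)^{-2}\,d\overline w$ for lines (via Lemma \ref{lemmaPi}), and a scale-by-scale comparison of $\partial\Omega$ with a best line at each dyadic ancestor of $\phi(Q)$ --- is a genuinely different route from the paper, which instead compares $\chi_\Omega$ with the characteristic function of a \emph{single} half plane $\Pi_Q$ bounded by the line minimizing $\beta_1(\phi(Q))$, uses \rf{eq***} and Lemma \ref{lemmaPi} to reduce to the bulk integral of $|z-w|^{-4}$ over $\Omega\Delta\Pi_Q$, and bounds $m(B_n\cap(\Omega\Delta\Pi_Q))$ by the accumulated sum $\sum_{\phi(Q)\subset P\subset R}\beta_1(P)\,\diam(R)^2$ (Lemma 4.3 of \cite{Cruz-Tolsa}), while \rf{eqclaim0.5} is simply quoted from \cite{Cruz-Tolsa}. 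However, as written your argument has a genuine gap at its central estimate: you control the discrepancy between $d\overline w$ along $\partial\Omega$ and along $L_k$ by the tangent-direction deviation and then assert that this deviation is dominated by $\beta_1(R_k)$. That domination is false: $\beta_1$ is an $L^1$ average of \emph{distances} to a line and gives no control on tangent directions --- a sawtooth of amplitude $\ve\,\ell(R_k)$ has $\beta_1(R_k)\lesssim\ve$ while its unit tangent differs from that of $L_k$ by a fixed amount on a set of full measure. Falling back on the Lipschitz bound for the slope only gives a contribution $\lesssim\delta/\ell(R_k)$ per scale, which sums to $\approx\delta/\ell(Q)$ and is not bounded by the right-hand side of \rf{eqclaim0.5}. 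The repair is precisely the step you leave implicit: integrate by parts (equivalently, apply Stokes' theorem to the region between the arc and the line segment) so that the derivative falls on the kernel and the slope deviation is traded for $\dist(\cdot,L_k)$, which \emph{is} what $\beta_1$ controls; at that point each per-scale term becomes an area integral of $|w-z|^{-3}$ (resp.\ $|w-z|^{-4}$ for \rf{eqclaim1}) over the region between curve and line, i.e.\ essentially the paper's argument in disguise.

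A second defect: the pieces $R_k\setminus R_{k-1}$ need not lie at distance $\approx\ell(R_k)$ from $z$, because the dyadic ancestors are not centered at $\phi(Q)$; points of $R_k\setminus R_{k-1}$ can come within distance $\approx\ell(Q)$ of $z$, where the kernel has size $\ell(Q)^{-3}$, and since $\beta_1(R_k)$ only controls an average, the mass of $\dist(\cdot,L_k)$ may concentrate exactly there, ruining the clean bound $\beta_1(R_k)/\ell(R_k)$ per scale. Near $\phi(Q)$ one must compare with lines of the corresponding small scales and control the mutual position of the lines $L_j$, $L_k$ by the betas of the intermediate scales; this is why the paper's estimate passes through the accumulated sums $\sum_{\phi(Q)\subset P\subset R}\beta_1(P)$ and then rearranges the double sum as in \rf{nucl}. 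So your ``book-keeping'' caveat about mismatched line segments is real, but the deeper point is that some beta-accumulation across scales is unavoidable in this scheme. Finally, note that the paper first disposes of the case $\beta_1(c_4\phi(Q))>\ve_0$ by the trivial bound $|\partial^2 B\chi_\Omega(z)|\lesssim\ell(Q)^{-2}$ coming from \rf{eq***}; you will need the same reduction to guarantee, for instance, that the bottom-scale approximating lines stay at distance $\approx\ell(Q)$ from $z$.
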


\begin{proof}
The estimate \rf{eqclaim0.5} has been proved in \cite{Cruz-Tolsa} (see equation (5.2) there). The proof of
the inequality \rf{eqclaim1} is very similar. For completeness, we sketch the arguments.
We may assume that $\beta_1(c_4\phi(Q))\leq\ve_0$, with $\ve_0>0$ small enough,
 for some fixed absolute constant
$c_4>10$, say. 
Indeed, from \rf{eq***} it turns out that $\bigl|\partial^2 B\chi_\Omega(z)\bigr|\leq c/\ell(Q)^2$, by choosing $\ve=\ell(Q)$ there, and so \rf{eqclaim1} holds if $\beta_1(c_4\phi(Q))>\ve_0$, with some constant depending on
$\ve_0$.

So suppose that $\beta_1(c_4\phi(Q))\leq\ve_0$, with $\ve_0$ very small. In this case, $L_Q$ 
is very close to 
$\partial\Omega$ near $\phi(Q)$, and then one infers that 
$\dist(z,L_Q)\approx\ell(Q).$
Denote by $\Pi_Q$ the half plane whose boundary is $L_Q$ and contains $z$. Take $0<\ve<\dist(z,\partial\Omega)$.
Since $(\frac1{z^4}\chi_{B(0,\ve)^c})*\chi_{\Pi_Q}$ vanishes on $\Pi_Q\ni z$ (because $\partial^2
B\chi_\Omega(z)=0$), we have
$$\bigl|\partial^2 B\chi_\Omega(z)\bigr|= \Bigl|\Bigl(\frac{6\pi}{z^4}\chi_{B(0,\ve)^c}\Bigr)*(\chi_\Omega- \chi_{\Pi_Q})(z)\Bigr|\leq 
\frac{6\pi}{|z|^3}*\chi_{\Omega\Delta\Pi_Q}(z).$$
For each $n\geq0$, let $B_n$ be a ball centered at $w'\in\phi(Q)$ with 
$$\diam(B_n)= 2^n\diam(\phi(Q))\approx 2^n\ell(Q),$$
and set also $B_{-1}=\varnothing$. 
For some $n_0$ such that $\diam(\Omega)\approx\diam(B_{n_0})$,  similarly to \cite{Cruz-Tolsa},
we have
\begin{align*}
\frac{6\pi}{|z|^4}*\chi_{\Omega\Delta\Pi_Q}(z) & = 
\sum_{n=0}^{n_0}\frac{6\pi}{|z|^4} * \chi_{B_n\cap(\Omega\Delta\Pi_Q)}(z) +
\frac{6\pi}{|z|^4} * 
\chi_{B_N^c\cap(\Omega\Delta\Pi_Q)}(z)\\
&\leq c
\sum_{n=0}^{n_0}\frac{1}{\ell(2^nQ)^4} \,m(B_n\cap(\Omega\Delta\Pi_Q))+ \frac{c}{\diam(\Omega)^2}.
\end{align*}
 By Lemma 4.3 from \cite{Cruz-Tolsa},  we have
$$m(B_n\cap(\Omega\Delta\Pi_Q))\leq c\sum_{P\in\DD(\partial \Omega):\phi(Q)\subset P\subset R}
\beta_1(P)\diam(R)^2,$$
where $R\in\DD(\partial\Omega)$ is some cube containing $\phi(Q)$ such that $\ell(R)
\approx\diam(B_n)$.
Then we obtain
\begin{align}\label{nucl}
\frac{6\pi}{|z|^4}*\chi_{\Omega\Delta\Pi_Q}(z) & \leq c
\sum_{\substack{R\in\DD(\partial\Omega):\\R\supset \phi(Q)}}\frac{1}{\ell(R)^4} \,
\sum_{\substack{P\in\DD(\partial \Omega):\\ \phi(Q)\subset P\subset R}}
\beta_1(P)\ell(R)^2 + \frac{c}{\diam(\Omega)^2}\\
& = c
\sum_{\substack{P\in\DD(\partial \Omega):\\P\supset \phi(Q)}}
\beta_1(P) 
\sum_{\substack{R\in\DD(\partial\Omega):\\R\supset P}}\frac1{\ell(R)^2}+ \frac{c}{\diam(\Omega)^2} \nonumber\\
&\leq c\sum_{\substack{P\in\DD(\partial\Omega):\\P\supset\phi(Q)}}\frac{\beta_1(P)}{\ell(P)^2}
+ \frac{c}{\diam(\Omega)^2} ,\nonumber
\end{align}
which proves \rf{eqclaim1}.
\end{proof}

Since $B\chi_\Omega$ is analytic in $\Omega$, it is clear that
$$\|B\chi_\Omega\|_{\dot W^{1,p}(\Omega)}^p\approx \int_\Omega |\partial B\chi_\Omega|^p\,dm.$$ 
On the other hand, for $0<\alpha<1$, we set $\|B\chi_\Omega\|_{\dot W^{\alpha,p}(\Omega)} =  \|D^\alpha B\chi_\Omega\|_{L^p(\Omega)},$
with $D^\alpha f$  defined in \rf{eqdalfa}. The following lemma relates $D^\alpha B\chi_\Omega$ to 
$\partial B\chi_\Omega$, and it will play a key role for the proof of Theorems \ref{teodom} and \ref{teopri}
in the case $0<\alpha<1$.

\begin{lemma}\label{lemkeyalfa}
Let 
$0<\alpha<1$ and $1<p<\infty$ be such that $\alpha p> 1$. For $0<\theta\leq 1$, if $\Omega$
is either a Lipschitz domain or a special Lipschitz domain, we have
\begin{equation}\label{eqdf890}
\|B(\chi_\Omega)\|_{\dot W^{\alpha,p}(\Omega)}^p\gtrsim \theta^{p-\alpha p} \!\!\int_\Omega|\partial B\chi_\Omega(z)|^p\,\dist(z,
\partial\Omega)^{p-\alpha p}\,dm(z) - c_3\,\theta^{2p-\alpha p}
\|N\|_{\dot B_{p,p}^{\alpha-1/p}(\partial\Omega)}^p,
\end{equation}
where the constant $c$ depends on $p$ and $\alpha$.
\end{lemma}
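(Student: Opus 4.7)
The plan is to produce a pointwise lower bound for $D^\alpha B\chi_\Omega(z)$ by restricting the nonlocal integral to a small ball around $z$ inside $\Omega$, where the holomorphy of $B\chi_\Omega$ (Lemma \ref{lemct1}) provides a Taylor expansion whose linear part feeds the main $|\partial B\chi_\Omega|^p$ term, and whose quadratic remainder will be controlled by the $\beta_1$-bound on $\partial^2 B\chi_\Omega$ from Lemma \ref{lemnou9}, leading via Dorronsoro's theorem (Lemma \ref{lemdorron}) to the Besov seminorm of $N$.

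Write $f=B\chi_\Omega$ and $d(z)=\dist(z,\partial\Omega)$. For $z\in\Omega$ and $y\in B(z,d(z))\subset\Omega$, integrating $f'$ twice along the segment from $z$ to $y$ gives
$$f(y)-f(z)=\partial f(z)\,(y-z)+R(y,z),\qquad |R(y,z)|\leq \tfrac{1}{2}|y-z|^2\sup_{w\in[z,y]}|\partial^2 f(w)|.$$
Using $|a+b|^2\geq \tfrac12|a|^2-|b|^2$, restricting the $y$-integral defining $D^\alpha f(z)^2$ to $y\in B(z,\theta d(z)/2)\subset\Omega$, and evaluating the two radial integrals $\int_0^r s^{1-2\alpha}\,ds\approx r^{2-2\alpha}$ and $\int_0^r s^{3-2\alpha}\,ds\approx r^{4-2\alpha}$ (both finite since $\alpha<1$), we get
$$D^\alpha f(z)^2\gtrsim |\partial f(z)|^2(\theta d(z))^{2-2\alpha}-M_z^2(\theta d(z))^{4-2\alpha},$$
where $M_z:=\sup_{w\in B(z,d(z)/2)}|\partial^2 f(w)|$. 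The elementary inequality $(A-B)_+^{p/2}\geq 2^{-p/2}A^{p/2}-B^{p/2}$, valid for all $A,B\geq 0$, applied to the above and integrated over $\Omega$, yields
$$\|f\|_{\dot W^{\alpha,p}(\Omega)}^p\gtrsim \theta^{p-\alpha p}\int_\Omega|\partial f(z)|^p d(z)^{p-\alpha p}\,dm(z)-C_p\,\theta^{2p-\alpha p}\int_\Omega M_z^p\,d(z)^{2p-\alpha p}\,dm(z).$$

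It remains to bound the error integral by $\|N\|_{\dot B^{\alpha-1/p}_{p,p}(\partial\Omega)}^p$. Decomposing $\Omega=\bigcup_Q Q$ into Whitney cubes, on each $Q$ we have $d(z)\approx\ell(Q)$ and, for $z\in Q$, the ball $B(z,d(z)/2)$ meets only Whitney cubes of side length comparable to $\ell(Q)$, so by Lemma \ref{lemnou9},
$$M_z\lesssim \sum_{R\in\DD(\partial\Omega):\,R\supset\phi(Q)}\frac{\beta_1(R)}{\ell(R)^2}+\frac{1}{\diam(\Omega)^2}.$$
The next step is to apply H\"older's inequality to the $R$-sum with an auxiliary weight $\ell(R)^{\pm\epsilon}$ so that the $p$-th power distributes harmlessly over the inner geometric sum, then to interchange the order of summation, using that for each $R\in\DD(\partial\Omega)$ the Whitney cubes $Q$ with $\phi(Q)\subset R$ and $\ell(Q)=2^{-j}$ number $\approx\ell(R)\cdot 2^j$. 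The $\epsilon$-factors cancel and the estimate reduces to
$$\sum_{R\in\DD(\partial\Omega)}\beta_1(R)^p\ell(R)^{2-\alpha p},$$
which by Lemma \ref{lemdorron} (with Besov exponent $\alpha-1/p$) is $\lesssim\|N\|_{\dot B^{\alpha-1/p}_{p,p}(\partial\Omega)}^p$, plus a $\diam(\Omega)$-term that vanishes for the special Lipschitz case and is absorbed into the normalization used for bounded Lipschitz domains.

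The main obstacle will be the combinatorial H\"older step that rearranges $\sum_Q\ell(Q)^{2+2p-\alpha p}\bigl(\sum_{R\supset\phi(Q)}\beta_1(R)/\ell(R)^2\bigr)^p$ into $\sum_R\beta_1(R)^p\ell(R)^{2-\alpha p}$: the auxiliary weight must be chosen so that H\"older, the geometric inner series, and the strip count $\approx\ell(R)\cdot 2^j$ of Whitney cubes along $R$ conspire to make the exponents cancel. The Taylor-expansion step is routine, but it is essential to restrict to the scale $\theta d(z)/2$, since this precisely produces the $\theta^{p-\alpha p}$ main contribution and the $\theta^{2p-\alpha p}$ error appearing in the statement.
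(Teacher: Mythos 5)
Your proposal is correct and follows essentially the same route as the paper: restrict the integral defining $D^\alpha B\chi_\Omega$ to a ball of radius $\approx\theta\,\dist(z,\partial\Omega)$, Taylor-expand the analytic function to split off the $|\partial B\chi_\Omega|^2$ main term from a $\sup|\partial^2 B\chi_\Omega|^2$ remainder, and then control the remainder via Lemma \ref{lemnou9}, a H\"older step with a small auxiliary weight, interchange of the sums over Whitney cubes and boundary cubes, and Lemma \ref{lemdorron}. The exponents in your sketched combinatorial step do cancel as claimed, so the argument matches the paper's proof up to inessential choices (balls versus Whitney cubes, the exact H\"older weight).
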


Notice that the integral on the right side is multiplied by by $\theta^{p-\alpha p}$,
while $\|N\|_{\dot B_{p,p}^{\alpha-1/p}(\partial\Omega)}^p$ by $\theta^{2p-\alpha p}$. The fact
that for $\theta\ll1$ we have
$\theta^{2p-\alpha p}\ll\theta^{p-\alpha p}$ will be important for the proof of Theorems \ref{teodom}
and \ref{teopri}.

\begin{proof}
For $x\in Q\in\WW(\Omega)$, we have
\begin{equation}\label{eqd303}
D^{\alpha}f(x)^2=\int_{\Omega}\frac{|f(x)-f(y)|^{2}}{|x-y|^{2+2\alpha}}\,dm(y)\geq 
\int_{|y-x|\leq \theta\ell(Q)}\frac{|f(x)-f(y)|^{2}}{|x-y|^{2+2\alpha}}\,dm(y).
\end{equation}
Let $f$ be analytic in $\Omega$, such as $B\chi_\Omega$. For $x\in Q\in\WW(\Omega)$ and $y\in\Omega$ such that
$$|x-y|\leq\theta \ell(Q),$$
 we have
$$|f(y)-f(x) - f'(x)(y-x)|\leq \frac12\,\sup_{w\in 3Q}|f''(w)|\,|x-y|^2.$$
Thus,
\begin{equation}\label{eqff55}
2|f(y)-f(x)|^2\geq |f'(x)(y-x)|^2 - \sup_{w\in 3Q}|f''(w)|^2\,|x-y|^4.
\end{equation}
Plugging this estimate into \rf{eqd303} yields
\begin{align*}
D^{\alpha}f(x)^2& \geq 
\int_{|y-x|\leq \theta\ell(Q)} \frac1{|x-y|^{2\alpha}}\,|f'(x)|^2\,dm(y)\\
&\quad -
\int_{|y-x|\leq \theta\ell(Q)} |x-y|^{2-2\alpha}\sup_{w\in 3Q}|f''(w)|^2\,dm(y)\\
&\gtrsim \theta^{2-2\alpha}\ell(Q)^{2-2\alpha} \,|f'(x)|^2- c\,\theta^{4-2\alpha}\ell(Q)^{4-2\alpha} \sup_{w\in 3Q}|f''(w)|^2.
\end{align*}
Therefore, since $\ell(Q)\approx\dist(x,\partial\Omega)$,
\begin{align*}
\|D^{\alpha}f\|_{L^p(\Omega)}^p &\gtrsim
\theta^{p-\alpha p} \int_\Omega |f'(x)|^p\,\dist(x,\partial\Omega)^{p-\alpha p}\,dm(x)\\&\quad -
c\,\theta^{2p-\alpha p}\sum_{Q\in\WW(\Omega)}
 \ell(Q)^{2+2p-\alpha p} \sup_{w\in 3Q}|f''(w)|^p.
\end{align*}

Hence, to prove \rf{eqdf890} it is enough to show that, for $f=B\chi_\Omega$,
\begin{equation}\label{eqdf891}
S:=\sum_{Q\in\WW(\Omega)}
 \ell(Q)^{2+2p-\alpha p} \sup_{w\in 3Q}|f''(w)|^p \lesssim \|N\|_{\dot B_{p,p}^{\alpha-1/p}(\partial\Omega)}^p.
 \end{equation}
Now, from Lemma \ref{lemnou9}, it turns out that for all $w\in3Q$, with $Q\in\WW(\Omega)$
$$
\bigl|\partial^2 B\chi_\Omega(w)\bigr|\lesssim\sum_{R\in\DD(\partial\Omega):R\supset\phi(Q)}\frac{\beta_1(R)}{\ell(R)^2}+ \frac1{\diam(\Omega)^2}.$$
Then we infer the term $S$ in \rf{eqdf891} satisfies
$$S\lesssim\sum_{Q\in\WW(\Omega)}
 \ell(Q)^{2+2p-\alpha p} \biggl(\sum_{R\in\DD(\partial\Omega):R\supset\phi(Q)}\frac{\beta_1(R)}{\ell(R)^2}\biggr)^p+ \frac{m(\Omega)}{\diam(\Omega)^{\alpha p}}.$$
The last term on the right side is bounded by $\diam(\Omega)^{2-\alpha p}$. For the first one we use Cauchy-Schwarz, and then we 
get
\begin{align*}
\biggl(
\sum_{R\in\DD(\partial\Omega):R
R\supset\phi(Q)}\!\!\frac{\beta_1(R)}{\ell(R)^2} \biggr)^p\! &
\leq \biggl(
\sum_{R\in\DD(\partial\Omega):R\supset\phi(Q)}\!\frac{\beta_1(R)^p}{\ell(R)^{2p-1/2}} \!\biggr)
\biggl(
\sum_{R\in\DD(\partial\Omega):R\supset\phi(Q)}\!\frac{1}{\ell(R)^{p'/(2p)}} \biggr)^{p/p'}
\\
&\lesssim
\sum_{R\in\DD(\partial\Omega):R\supset\phi(Q)}\frac{\beta_1(R)^p}{\ell(R)^{2p-1/2}} \frac1{\ell(\phi(Q))^{1/2}}.
\end{align*} 
 Thus,
\begin{align*}
\sum_{Q\in\WW(\Omega)}
 \ell(Q)^{2+2p-\alpha p} \biggl( &\sum_{R\in\DD(\partial\Omega):R\supset\phi(Q)} \frac{\beta_1(R)}{\ell(R)^2}\biggr)^p \\
 &\lesssim
\sum_{Q\in \WW(\Omega)}
\sum_{P\in\DD(\partial\Omega):P\supset\phi(Q)}\frac{\beta_1(P)^p}{\ell(P)^{2p-1/2}}\, \ell(\phi(Q))^{3/2+2p-\alpha p}\\
& = 
\sum_{P\in\DD(\partial\Omega)}\frac{\beta_1(P)^p}{\ell(P)^{2p-1/2}}\, \sum_{Q\in \WW(\Omega):\phi(Q)\subset P}
\ell(\phi(Q))^{3/2+2p-\alpha p}.
\end{align*}
 Notice that
$$\sum_{Q\in \WW(\Omega):\phi(Q)\subset P}
\ell(\phi(Q))^{3/2+2p-\alpha p}\lesssim\sum_{\wt Q\in \DD(\partial\Omega):\wt Q\subset P}
\ell\bigl(\wt Q\bigr)^{3/2+2p-\alpha p}\lesssim\ell(P)^{3/2+2p-\alpha p},$$
because $3/2+2p-\alpha p>1$. Hence,
$$\sum_{Q\in\WW(\Omega)}
 \ell(Q)^{2+2p-\alpha p} \biggl(\sum_{R\in\DD(\partial\Omega):R\supset\phi(Q)} \frac{\beta_1(R)}{\ell(R)^2}\biggr)^p\lesssim
\sum_{P\in\DD(\partial\Omega)}\beta_1(P)^p\,\ell(P)^{2 -\alpha p}.$$

Therefore,
$$S\lesssim \sum_{P\in\DD(\partial\Omega)}\biggl(\frac{\beta_1(P)}{\ell(P)^{\alpha+1/p}}\biggr)^p\,\ell(P) +
\diam(\Omega)^{2-\alpha p}\approx \|N\|_{\dot{B}_{p,p}^{\alpha-1/p}(\partial\Omega)}^p,$$
by Lemma \ref{lemdorron}, as wished.

The arguments for special Lipschitz domains
are analogous, and even easier. Roughly speaking, the only difference is that the terms above 
which involve $\diam(\Omega)$ do not appear.
\end{proof}


\section{The main lemma and the proof of Theorem \ref{teopri}}\label{sec4}

The main result of this section is the following.

\begin{lemma}[{\bf Main Lemma}]\label{mainlemma}
Let $\Omega\subset\C$ be a special $\delta$-Lipschitz domain. Let $1<p<\infty$ and $0<\alpha\leq1$ be such
that $\alpha p>1$. If $\delta$ is small enough, then
\begin{equation}\label{eqmain73}
\int_\Omega|\partial B\chi_\Omega(z)|^p\,\dist(z,
\partial\Omega)^{p-\alpha p}\,dm(z) \gtrsim \|N\|_{\dot B_{p,p}^{\alpha-1/p}(\partial\Omega)}^p.
\end{equation}
\end{lemma}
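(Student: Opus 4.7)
The plan is to parametrize $\Omega$ by coordinates $(x_0,t)\in\R\times(0,\infty)$ via $w(x_0,t)=x_0+i(A(x_0)+t)$. Since $\|A'\|_\infty\le\delta$ is small, this change of coordinates has unit Jacobian and $\dist(w,\partial\Omega)\approx t$. Setting $F(x_0,t):=\partial B\chi_\Omega(w(x_0,t))$, the left-hand side of \rf{eqmain73} is comparable to
$$\int_0^\infty t^{p-\alpha p}\,\|F(\cdot,t)\|_{L^p(\R)}^p\,dt,$$
while by \rf{eqnn} and Lemma \ref{lemanorm} the right-hand side of \rf{eqmain73} is comparable to $\|A'\|_{\dot B_{p,p}^{\alpha-1/p}}^p$. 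Thus the goal reduces to the Littlewood--Paley type inequality
$$\int_0^\infty t^{p-\alpha p}\|F(\cdot,t)\|_p^p\,dt\gtrsim\|A'\|_{\dot B_{p,p}^{\alpha-1/p}}^p.$$

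By Lemma \ref{lemderiv2}, $F(x_0,t)=\frac1\pi\int_\R\frac{A'(x)}{(x-x_0+i(A(x)-A(x_0))-it)^2}\,dx$. We split $F=F_0+E$, where the flat model (obtained by pretending the graph of $A$ is horizontal) is
$$F_0(x_0,t):=\frac1\pi\int_\R\frac{A'(x)}{(x-x_0-it)^2}\,dx=\frac1t\,(\psi_t*A')(x_0),$$
with $\psi(u):=\frac1{\pi(u-i)^2}$ and $\psi_t(x):=t^{-1}\psi(x/t)$. One verifies $\int_\R\psi\,du=0$, $\psi$ is smooth, and $|\psi(u)|\lesssim(1+|u|)^{-2}$. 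The hypothesis $\alpha p>1$ forces $\beta:=\alpha-\frac1p\in(0,1)$, a range where the standard Littlewood--Paley characterization of $\dot B_{p,p}^\beta$ applies to this kernel (the one-sided support of $\wh\psi$ is harmless because $A'$ is real and the Hilbert transform is bounded on $\dot B_{p,p}^\beta$), yielding
$$\int_0^\infty t^{p-\alpha p}\|F_0(\cdot,t)\|_p^p\,dt=\int_0^\infty t^{-\beta p-1}\|\psi_t*A'\|_p^p\,dt\approx\|A'\|_{\dot B_{p,p}^{\alpha-1/p}}^p.$$

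It remains to bound the error $E=F-F_0$ by a small multiple of $\|A'\|_{\dot B_{p,p}^{\alpha-1/p}}^p$. Expanding
$$\frac1{(a+b)^2}-\frac1{a^2}=-\frac{2b}{a^3}+\frac{3b^2}{a^4}-\cdots$$
with $a=x-x_0-it$ and $b=i(A(x)-A(x_0))$ (so that $|b|\le\delta|x-x_0|\le\tfrac12|a|$ once $\delta$ is small enough), the leading-order contribution to $E$ is
$$E_1(x_0,t):=-\frac{2i}\pi\int_\R\frac{A'(x)(A(x)-A(x_0))}{(x-x_0-it)^3}\,dx.$$
The naive pointwise bound $|E|\lesssim\delta\int\frac{|A'(x)|\,|x-x_0|}{|x-x_0-it|^3}\,dx$ gives a kernel without cancellation and is not sufficient after integration against $t^{p-\alpha p}\,dx_0\,dt$. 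The key is to use the algebraic identity $A'(x)(A(x)-A(x_0))=\tfrac12\partial_x(A(x)-A(x_0))^2$ and integrate by parts to obtain
$$E_1(x_0,t)=-\frac{3i}\pi\int_\R\frac{(A(x)-A(x_0))^2}{(x-x_0-it)^4}\,dx,$$
and to iterate this manoeuvre on each term of the expansion: each step trades a factor $A(x)-A(x_0)$ for an extra factor of $\|A'\|_\infty\le\delta$ in the kernel while preserving the requisite cancellation and scaling, in the spirit of the Coifman--McIntosh--Meyer treatment of the Cauchy integral on Lipschitz graphs. This writes $E$ as a convergent (for $\delta$ small) series of terms of the form $\frac1t(\Psi^{(k)}_t*A')(x_0)$ with mean-zero kernels $\Psi^{(k)}$ of size $O(\delta^k)$. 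Applying the Littlewood--Paley estimate of the previous paragraph term by term and summing,
$$\int_0^\infty t^{p-\alpha p}\|E(\cdot,t)\|_p^p\,dt\lesssim\delta^p\|A'\|_{\dot B_{p,p}^{\alpha-1/p}}^p.$$

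Finally, the elementary inequality $|F_0|^p\le 2^{p-1}(|F|^p+|E|^p)$ combines the two previous displays into
$$\int_0^\infty t^{p-\alpha p}\|F(\cdot,t)\|_p^p\,dt\ge c\,\|A'\|_{\dot B_{p,p}^{\alpha-1/p}}^p-C\delta^p\|A'\|_{\dot B_{p,p}^{\alpha-1/p}}^p,$$
and taking $\delta=\delta(\alpha,p)$ small enough that $c-C\delta^p>0$ concludes \rf{eqmain73}. The principal difficulty is the error control in the third paragraph: the pointwise first-order estimate on $E$ is not integrable at the required rate, so cancellation must be extracted by repeated integration by parts, producing a perturbative series whose convergence (and smallness) is governed by powers of $\delta$.
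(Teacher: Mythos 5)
Your reduction to graph coordinates, the identification of the flat model $F_0=\frac1t(\psi_t*A')$, and the lower bound for the flat model via a Calder\'on-reproducing/Littlewood--Paley argument (handling the one-sided spectrum of $\wh\psi$ using that $A'$ is real and the Hilbert transform is bounded on the Besov scale) are sound, and they parallel the second half of Lemma \ref{lem5} in the paper. The proof breaks, however, exactly at the error estimate that you yourself call the principal difficulty. After your integration by parts, the terms of the expansion are genuinely multilinear in $A$ --- e.g.\ $\int (A(x)-A(x_0))^2(x-x_0-it)^{-4}\,dx$ --- and they are \emph{not} of the form $\frac1t(\Psi^{(k)}_t*A')(x_0)$ for fixed mean-zero kernels $\Psi^{(k)}$, so the linear Littlewood--Paley estimate of your second paragraph cannot be applied ``term by term''. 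Moreover, the integration by parts by itself produces no usable gain: estimating $|A(x)-A(x_0)|\le\delta|x-x_0|$ in the new integrand returns precisely the divergent bound $\lesssim\delta/t$ that you correctly rejected before. What your scheme actually needs is a family of Coifman--McIntosh--Meyer-type multilinear estimates, with constants $C^k\delta^k$, of the form
$$\int_0^\infty\!\!\int_\R t^{p-\alpha p}\Bigl|\int_\R\frac{(A(x)-A(x_0))^k\,A'(x)}{(x-x_0-it)^{k+2}}\,dx\Bigr|^p\,dx_0\,dt\;\lesssim\;(C\delta)^{kp}\,\|A'\|_{\dot B_{p,p}^{\alpha-1/p}}^p,$$
and nothing in your write-up proves, or even reduces, these; the classical CMM bounds are $L^2$/square-function statements and do not directly yield this weighted $L^p(t^{p-\alpha p}\,dt\,dx_0)$ inequality. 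As it stands, the heart of the Main Lemma is asserted rather than proved.

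The paper avoids the multilinear machinery altogether by not comparing with the horizontal line: on each Whitney square $Q$ it compares $\imag\,\partial B\chi_\Omega$ with the integral against the \emph{best approximating line} $L_Q$, so that the error is controlled pointwise by $\delta\sum_{P\supset\phi(Q)}\beta_1(P)/\ell(P)$ (Lemma \ref{lemderiv3}); summing over Whitney squares with Cauchy--Schwarz and invoking Dorronsoro's theorem (Lemma \ref{lemderiv4}) converts this into the required $\delta^p\|A\|_{\dot B_{p,p}^{1+\alpha-1/p}}^p$ bound. The main term, which now carries the local slope $\theta_Q$, is then split into the kernels $K_t$ and $J_t$ and handled by purely linear Fourier arguments (Lemmas \ref{lem5} and \ref{lem6}), with the $J_t$ piece absorbing another factor $\delta$. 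If you wish to keep your global flat model you must supply the multilinear Besov-tent estimates displayed above; otherwise the local-best-line/$\beta_1$ route is the natural way to close the gap.
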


Before worrying about the proof of the preceding result we show that this yields Theorem \ref{teopri} as
an easy consequence.

\begin{proof}[\bf Proof of Theorem \ref{teopri}]
In the case $\alpha=1$, it is clear that
$$\int_\Omega|\partial B\chi_\Omega(z)|^p\,\dist(z,
\partial\Omega)^{p-\alpha p}\,dm(z) \approx \|B\chi_\Omega\|_{\dot W^{1,p}(\Omega)}^p,$$
and thus the theorem is a straightforward consequence of \rf{eqmain73}.
For $0<\alpha<1$, we need to use Lemma \ref{lemkeyalfa} too. Indeed, if $\theta$ is chosen small enough,
from the Main Lemma, we will have
$$\theta^{p-\alpha p} \int_\Omega|\partial B\chi_\Omega(z)|^p\,\dist(z,
\partial\Omega)^{p-\alpha p}\,dm(z) \geq 2 c_3\,\theta^{2p-\alpha p}
\|N\|_{B_{p,p}^{\alpha-1/p}(\partial\Omega)}^p,$$
where $c_3$ is the constant appearing in \rf{eqdf890}. Then Lemma \ref{lemkeyalfa} tells us that
\begin{equation}\label{eqcor5}
\|B(\chi_\Omega)\|_{\dot W^{\alpha,p}(\Omega)}^p\gtrsim \theta^{p-\alpha p} \!\!\int_\Omega|\partial B\chi_\Omega(z)|^p\,\dist(z,
\partial\Omega)^{p-\alpha p}\,dm(z).
\end{equation}
Together with the Main Lemma again this implies that
$$\|B(\chi_\Omega)\|_{\dot W^{\alpha,p}(\Omega)}^p\gtrsim \|N\|_{\dot B_{p,p}^{\alpha-1/p}(\partial\Omega)}^p.
$$
\end{proof}

Notice also that from the Main Lemma, the inequality \rf{eqcor5}, and the fact that 
$$\|B(\chi_\Omega)\|_{\dot W^{\alpha,p}(\Omega)}\lesssim \|N\|_{\dot B_{p,p}^{\alpha-1/p}(\partial\Omega)},$$
proved in \cite{Cruz-Tolsa}, we deduce the following.

\begin{coro}\label{coro40}
Let $\Omega\subset\C$ be a special $\delta$-Lipschitz domain. Let $1<p<\infty$ and $0<\alpha<1$ be such
that $\alpha p>1$. If $\delta$ is small enough, then
$$\|B(\chi_\Omega)\|_{\dot W^{\alpha,p}(\Omega)}^p\approx
\int_\Omega|\partial B\chi_\Omega(z)|^p\,\dist(z,
\partial\Omega)^{p-\alpha p}\,dm(z) \approx \|N\|_{\dot B_{p,p}^{\alpha-1/p}(\partial\Omega)}^p.
$$
\end{coro}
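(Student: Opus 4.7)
The plan is to combine three already-established facts into a cycle of inequalities. Denote $A := \|B(\chi_\Omega)\|_{\dot W^{\alpha,p}(\Omega)}^p$, $B := \int_\Omega|\partial B\chi_\Omega(z)|^p\,\dist(z,\partial\Omega)^{p-\alpha p}\,dm(z)$, and $C := \|N\|_{\dot B_{p,p}^{\alpha-1/p}(\partial\Omega)}^p$. The Main Lemma \ref{mainlemma} gives the inequality $B \gtrsim C$, while the Cruz-Tolsa bound \rf{eqct6} proved in \cite{Cruz-Tolsa} gives $A \lesssim C$. Chaining these two, I already obtain $A \lesssim C \lesssim B$, so only the reverse inequality $B \lesssim A$ remains to be established to close the loop and yield the three-way equivalence.

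To get $B \lesssim A$, I would invoke Lemma \ref{lemkeyalfa}, which for every $0 < \theta \leq 1$ reads
$$A \;\gtrsim\; \theta^{p-\alpha p}\,B \;-\; c_3\,\theta^{2p-\alpha p}\,C.$$
Rearranging gives $B \lesssim \theta^{-(p-\alpha p)}\,A + c_3\,\theta^{p}\,C$. Substituting the Main Lemma estimate $C \lesssim B$ into the right-hand side yields $B \lesssim \theta^{-(p-\alpha p)}\,A + c\,c_3\,\theta^{p}\,B$ for some constant $c$ depending only on $\alpha,p$ and the Main Lemma. Choosing $\theta = \theta(\alpha,p)$ small enough so that $c\,c_3\,\theta^{p} \leq 1/2$, the $B$ term on the right is absorbed into the left, giving $B \lesssim A$ (the implicit constant now depends on the chosen $\theta$, hence on $\alpha$ and $p$ only). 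Combined with the previous chain this produces $A \approx B \approx C$, as claimed.

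The one genuinely delicate point is the absorption step: it works because Lemma \ref{lemkeyalfa} exhibits a strict gap between the exponent $p-\alpha p$ multiplying $B$ and the exponent $2p-\alpha p$ multiplying $C$, so that after dividing through by $\theta^{p-\alpha p}$ the coefficient of $C$ becomes a positive power of $\theta$. It is precisely this gap, combined with the Main Lemma's ability to trade $C$ for $B$, that allows $\theta$ to be tuned freely to beat the constant $c_3$. Beyond this observation the argument is pure bookkeeping, with no additional geometric or analytic input needed beyond the smallness of $\delta$ already required by the Main Lemma.
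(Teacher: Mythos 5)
Your proposal is correct and follows essentially the same route as the paper: the paper also deduces the corollary from the Main Lemma (giving $\|N\|^p\lesssim\int|\partial B\chi_\Omega|^p\dist(\cdot,\partial\Omega)^{p-\alpha p}\,dm$), from Lemma \ref{lemkeyalfa} with $\theta$ chosen small so that the $\theta^{2p-\alpha p}$ term is dominated (which is exactly your absorption, just organized before rather than after rearranging), and from the Cruz--Tolsa bound \rf{eqct6}. The only cosmetic difference is that your absorption tacitly subtracts a multiple of $B$ from both sides, which presumes $B<\infty$; the paper's ordering (bounding the negative term by half the positive one before invoking Lemma \ref{lemkeyalfa}) sidesteps this, but it is an inessential point.
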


The remaining of this section is devoted to the proof of the Main Lemma. First some remarks
abut notation and terminology: recall that in Subsection \ref{subsec2.1} to each square $Q\in\WW(\Omega)$ we assigned a cube
$\phi(Q)\in\DD(\partial\Omega)$ with diameter and distance to $Q$ both comparable to $\ell(Q)$.
In the case of special Lipschitz domains the following precise definition of $\phi(Q)$ is very convenient.
Given a square $Q=(a,b]\times(c,d]\in\WW(\Omega)$, we consider the arc 
$$\phi(Q)= \{(x,A(x)):\,a< x\leq b\}.$$
In particular, notice that $\phi(Q)\in\DD(\partial\Omega)$.
Observe also that $\ell(Q)=\HH^1(\phi(Q))$ and that 
$$\dist(Q,\phi(Q))\approx\ell(Q).$$
We denote 
$$\ell(\phi(Q)):=\ell(Q).$$
Moreover, given $a>1$ and $P\in\DD(\Omega)$ of the form
$$P=\{(x,A(x)):x\in I\},$$
for some interval $I\subset \R$, we let $aP$ be the following arc from $\partial\Omega$:
$$P=\{(x,A(x)):x\in aI\}.$$

\begin{lemma}\label{lemderiv3}
Let $\Omega$ be as in Theorem \ref{teopri}. Consider a square $Q\in\WW(\Omega)$ and denote by
$L_Q$ a line that minimizes $\beta_1(\phi(Q))$. Let $y=g_Q(x)$ the affine map defining $L_Q$. Then, for
any $w\in 3Q$ we have
$$
\left|\imag\left(\partial B\chi_\Omega(w) - \frac1\pi\int_{\R} \frac{A'(x)}{(x+i\,g_Q(x)-w)^2}\,dx
\right)\right|\leq c\,\delta\sum_{P\in\DD(\partial\Omega):P\supset\phi(Q)} \frac{\beta_1(P)}{\ell(P)}\,.$$
\end{lemma}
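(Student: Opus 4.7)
The plan is to invoke Lemma \ref{lemderiv2}, which expresses $\partial B\chi_\Omega(w)=\pi^{-1}\int_\R A'(x)(x+iA(x)-w)^{-2}\,dx$, so that the quantity to be bounded is
$$D(w):=\frac{1}{\pi}\int_{\R}A'(x)\left[\frac{1}{(x+iA(x)-w)^2}-\frac{1}{(x+ig_Q(x)-w)^2}\right]dx.$$
With the shorthand $z(x):=x+ig_Q(x)-w$ and $u(x):=A(x)-g_Q(x)$, a direct algebraic computation yields the identity
$$\frac{1}{(z+iu)^2}-\frac{1}{z^2}=-\frac{2iu}{z^3}-\frac{u^2(3z+2iu)}{z^3(z+iu)^2},$$
which splits $D(w)$ into a principal part $D_1(w)$ and a remainder $D_2(w)$. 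Since $u$ is real-valued and $\|A'\|_\infty\le\delta$, already at the level of moduli one has $|D_1(w)|\le 2\delta\pi^{-1}\int|u|/|z|^3\,dx$, so the imaginary-part estimate of the lemma will follow from the pointwise bound $\int|u|/|z|^3\,dx\lesssim\sum_{P\supset\phi(Q)}\beta_1(P)/\ell(P)$, together with an analogous bound for $D_2(w)$.

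For the main integral I would list the dyadic ancestors of $\phi(Q)$ in $\DD(\partial\Omega)$ as $P_0=\phi(Q)\subset P_1\subset\cdots$ with base intervals $I_k$, so that $\ell(P_k)\approx 2^k\ell(Q)$, and decompose $\R=3I_0\cup\bigcup_{k\ge 0}(3I_{k+1}\setminus 3I_k)$. One may assume $\beta_1(\phi(Q))\le\ve_0$ for a small absolute constant $\ve_0$, since otherwise the crude pointwise estimate $|\partial B\chi_\Omega(w)|\lesssim 1/\ell(Q)$ already used in Lemma \ref{lemnou9} suffices. Under this assumption $\dist(w,L_Q)\approx\ell(Q)$ for $w\in 3Q$, so $|z(x)|\gtrsim\ell(P_k)$ on $3I_{k+1}\setminus 3I_k$. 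A telescoping argument comparing $g_Q$ to the best affine approximations $g_{P_j}$ of $A$ at each scale, using $\|A-g_{P_k}\|_{L^\infty(3I_{k+1})}\lesssim\beta_1(P_k)\ell(P_k)$ (with a minor Lipschitz correction beyond $3I_k$) and the affine-extrapolation bound $\|g_{P_{j+1}}-g_{P_j}\|_{L^\infty(3I_{k+1})}\lesssim\beta_1(P_{j+1})\ell(P_{k+1})$ (obtained by enlarging from $3I_{j+1}$ to $3I_{k+1}$), produces the key pointwise bound
$$|u(x)|\lesssim \ell(P_k)\sum_{j\le k}\beta_1(P_j)\qquad\mbox{for }x\in 3I_{k+1}\setminus 3I_k.$$
Integrating on each annulus and swapping the order of summation yields
$$\int_{\R}\frac{|u|}{|z|^3}\,dx\lesssim\sum_{k\ge 0}\frac{1}{\ell(P_k)}\sum_{j\le k}\beta_1(P_j)=\sum_{j\ge 0}\beta_1(P_j)\sum_{k\ge j}\frac{1}{\ell(P_k)}\lesssim\sum_{P\supset\phi(Q)}\frac{\beta_1(P)}{\ell(P)},$$
with the contribution of $3I_0$ being $\lesssim\beta_1(\phi(Q))/\ell(Q)$ directly from the definition of $\beta_1$.

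For the remainder $D_2(w)$ the integrand has modulus $\lesssim |u|^2/|z|^4$ on the region $\{|u|\le|z|/2\}$, which, when $\delta$ is small, covers the relevant parts of $\R$ since $|u|\lesssim\delta|z|$; combining the coarse Lipschitz bound $|u|\lesssim\delta\ell(P_{k+1})$ with the refined bound above gains one extra factor of $\delta$ and delivers $|D_2(w)|\lesssim\delta^2\sum_{P\supset\phi(Q)}\beta_1(P)/\ell(P)$, absorbed into the main estimate. I expect the main obstacle to be the telescoping bound for $|u(x)|$ on annuli, in particular the extrapolation of the affine approximations $g_{P_j}$ outside their natural scales in such a way that the amplification $\ell(P_{k+1})/\ell(P_{j+1})$ combines with $\beta_1(P_{j+1})\ell(P_{j+1})$ to give a clean $\sum_{j\le k}\beta_1(P_j)$ that is then tamed by the Fubini-type swap in the double sum; once this pointwise bound is in place, the algebraic identity and the summation outlined above complete the proof.
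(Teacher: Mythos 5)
Your overall route is the same as the paper's: start from Lemma \ref{lemderiv2}, bound the difference of the two kernels by $\delta\int |A-g_Q|/|x+ig_Q(x)-w|^3\,dx$ (you do this by taking moduli and using $|A'|\le\delta$, while the paper computes the imaginary part explicitly and splits into $T_1+T_2$ — this difference is cosmetic, since in both cases the gain of $\delta$ comes from $|A'(x)|\le\delta$), then estimate that integral by a multiscale comparison of approximating lines over the dyadic ancestors of $\phi(Q)$ and a Fubini-type swap, exactly as in the paper.

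There is, however, one step that is not justified as written: the claim $\|A-g_{P_k}\|_{L^\infty(3I_{k+1})}\lesssim\beta_1(P_k)\,\ell(P_k)$. The coefficient $\beta_1$ is an $L^1$-type quantity and does not control the sup-norm distance from the graph to the minimizing line; for a $\delta$-Lipschitz graph a spike of height $h$ and width $\approx h/\delta$ only forces $\beta_1\ell^2\gtrsim h^2/\delta$, i.e.\ one can only conclude $\|A-g_{P_k}\|_\infty\lesssim(\delta\,\beta_1(P_k))^{1/2}\ell(P_k)$, not $\beta_1(P_k)\ell(P_k)$. The pointwise bound $|u(x)|\lesssim\ell(P_k)\sum_{j\le k}\beta_1(P_j)$ on the annuli is therefore false in general. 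Fortunately your argument does not need it: since on $3I_{k+1}\setminus 3I_k$ one has $|z|\gtrsim\ell(P_k)$, only $\int_{3I_{k+1}}|A-g_{P_k}|\,dx$ is required, and this is $\lesssim\beta_1(P_k)\ell(P_k)^2$ directly from the definition of $\beta_1$ (vertical distance being comparable to distance for small slopes); the $L^\infty$ extrapolation should be reserved for the line-to-line comparisons $g_{P_{j+1}}-g_{P_j}$, where it is legitimate because affine functions are rigid. This is precisely how the paper proceeds: it keeps the graph-to-line term in integrated form and uses the Hausdorff-distance estimate $\dist_H(L_Q\cap B,\,L_R\cap B)\lesssim\sum_{\phi(Q)\subset P\subset R}\beta_1(P)\,\ell(R)$ from \cite{Cruz-Tolsa} for the line-to-line part. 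A second, smaller inaccuracy: in your reduction to $\beta_1(\phi(Q))\le\ve_0$, the crude bound $|\partial B\chi_\Omega(w)|\lesssim1/\ell(Q)$ would not suffice, because the right-hand side of the lemma carries the factor $\delta$ (you would need $\lesssim\delta/\ell(Q)$, and you would also need $\dist(w,L_Q)\gtrsim\ell(Q)$ to control the $g_Q$-integral); in the present setting this case is actually vacuous, since $\|A'\|_\infty\le\delta$ forces $\beta_1(\phi(Q))\lesssim\delta\ll\ve_0$, and it is this smallness (as the paper notes) that guarantees $g_Q$ has small slope and $\dist(w,L_Q)\approx\ell(Q)$ for $w\in3Q$, which your estimates on the annuli use throughout.
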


\begin{proof}
Let $w,Q,L_Q,g_Q$ be as in the statement above.
By the preceding lemma,
\begin{align}
\imag &\biggl(\partial B\chi_\Omega(w) - \frac1\pi\int_{\R} \frac{A'(x)}{(x+i\,g_Q(x)-w)^2}\,dx \biggr)\label{eqbeu1}\\
 & =\frac1\pi
\int_{\R} \imag\left(\frac{1}{(x+iA(x)-w)^2} - \frac{1}{(x+i\,g_Q(x)-w)^2}\right)A'(x)\,dx.\nonumber
\end{align}
Denoting $w=a+ib$, we have
\begin{align}
\imag&\left(\frac{1}{(x+iA(x)-w)^2} - \frac{1}{(x+i\,g_Q(x)-w)^2}\right)  \label{eqim1}\\
& =
-2(x-a)\left(\frac{A(x)-b}{\bigl((x-a)^2+(A(x)-b)^2\bigr)^2} - 
\frac{g_Q(x)-b}{\bigl((x-a)^2+(g_Q(x)-b)^2\bigr)^2}\right).\nonumber
\end{align}
We write the expression on the right inside the big parentheses as follows:
\begin{align*}
&\!\!\!\!\frac{\bigl(A(x)-b\bigr) - \bigl(g_Q(x)-b\bigr)}{\bigl((x-a)^2+(A(x)-b)^2\bigr)^2}\\& \quad+
\bigl(g_Q(x)-b\bigr)\left(\frac1{\bigl((x-a)^2+(A(x)-b)^2\bigr)^2}-
\frac1{\bigl((x-a)^2+(g_Q(x)-b)^2\bigr)^2}\right)\\
& =:T_1 + T_2.
\end{align*}
The first term equals
\begin{equation}\label{eqt1}
T_1 = \frac{A(x)-g_Q(x)}{|x+iA(x)-w|^4}.
\end{equation}
Concerning $T_2$, we have
\begin{align*}
&\frac1{\bigl((x-a)^2+(A(x)-b)^2\bigr)^2}-
\frac1{\bigl((x-a)^2+(g_Q(x)-b)^2\bigr)^2} \\
& = 
\frac{\bigl[2(x-a)^2+(g_Q(x)-b)^2+ (A(x)-b)^2\bigr]\,
\bigl[(g_Q(x)-b)^2 -(A(x)-b)^2\bigr]}
{\bigl((x-a)^2+(A(x)-b)^2\bigr)^2\,\bigl((x-a)^2+(g_Q(x)-b)^2\bigr)^2}\\
& = 
\frac{\bigl[2(x-a)^2+(g_Q(x)-b)^2+ (A(x)-b)^2\bigr]\,
\bigl[g_Q(x)+ A(x)-2b\bigr] \,
\bigl[g_Q(x)-A(x)\bigr]}
{\bigl((x-a)^2+(A(x)-b)^2\bigr)^2\,\bigl((x-a)^2+(g_Q(x)-b)^2\bigr)^2}.
\end{align*}
Notice now that 
$$(x-a)^2+(g_Q(x)-b)^2\approx(x-a)^2+(A(x)-b)^2.$$
 This follows easily from the fact that $A$ is a Lipschitz
graph with small slope and so we can assume that the slope of $g_Q$ is small and bounded independently of $Q$. Then, from
the last calculation, we obtain
$$|T_2|\lesssim 
\frac{
\bigl|A(x)-g_Q(x)\bigr|}
{\bigl((x-a)^2+(A(x)-b)^2\bigr)^2} = \frac{|A(x)-g_Q(x)|}{|x+iA(x)-w|^4}.$$
From \rf{eqbeu1}, \rf{eqim1}, \rf{eqt1}, the last estimate, and the fact that $\|A'\|_\infty\leq \delta$, we deduce that
\begin{align}\label{eqint3}
\left|\imag\left(\partial B\chi_\Omega(w) - \int_{\R} \frac{A'(x)}{(x+i\,g_Q(x)-w)^2}\,dx
\right)\right|&\lesssim
\int_{\R} 
\frac{|x-a|\,|A(x)-g_Q(x)|}{|x+iA(x)-w|^4}\,|A'(x)|
\,dx\\
&\lesssim\delta \int_{\R} 
\frac{|A(x)-g_Q(x)|}{|x+iA(x)-w|^3}
\,dx.\nonumber
\end{align}

Now we wish to estimate the last integral in \rf{eqint3}. To this end, we set
\begin{equation}\label{eqcur3}
\int_{\R} 
\frac{|A(x)-g_Q(x)|}{|x+iA(x)-w|^3}
\,dx \lesssim 
\sum_{k\geq0}\int_{ |x-a|\leq2^k\ell(Q)} 
\frac{\dist(x,L_Q)}{(2^k\ell(Q))^3}
\,dx.
\end{equation}

Consider $R\in\DD(\partial\Omega)$ such that $R\supset \phi(Q)$. 
Let $L_R$ be a line that minimizes $\beta_1(R)$. 
Then, as shown in \cite{Cruz-Tolsa},
$$\dist_H(L_Q\cap B(a,5\ell(R)),\,L_R\cap B(a,5\ell(R)))\leq c\sum_{P\in\DD(\partial \Omega):Q\subset P\subset R}
\beta_1(P)\,\ell(R),$$
where $\dist_H$ stands for the Hausdorff distance.
As a consequence,
for $x\in 3R$,
$$\dist(x,L_Q)= \dist(x,L_Q\cap B(a,5\ell(R)))\lesssim \dist(x,L_R) + 
\sum_{P\in\DD(\partial \Omega):Q\subset P\subset R}\beta_1(P)\,\ell(R).$$
Plugging this estimate into \rf{eqcur3}, we get
\begin{align*}
\int_{\R} 
\frac{|A(x)-g_Q(x)|}{|x+iA(x)-w|^3}
\,dx & \lesssim 
\sum_{R\in\DD(\partial \Omega):R\supset \phi(Q)}\int_{p_1(3R)}
\frac{\dist(x,L_Q)}{\ell(R)^3}
\,dx \\
& \lesssim
\sum_{\substack{R\in\DD(\partial \Omega):\\ R\supset \phi(Q)}}
\int_{p_1(3R)}
\frac{\dist(x,L_R)}{\ell(R)^3}
\,dx + \sum_{\substack{R\in\DD(\partial \Omega):\\ R\supset \phi(Q)}}\,\,
\sum_{\substack{P\in\DD(\partial \Omega):\\
\phi(Q)\subset P\subset R}}
\frac{\beta_1(P)}{\ell(R)}
\\ &
\lesssim \sum_{\substack{R\in\DD(\partial \Omega):\\ R\supset \phi(Q)}}\,\,
\sum_{\substack{P\in\DD(\partial \Omega):\\
\phi(Q)\subset P\subset R}}
\frac{\beta_1(P)}{\ell(R)} 
\lesssim
\sum_{\substack{P\in\DD(\partial \Omega):\\P\supset 
\phi(Q)}}
\frac{\beta_1(P)}{\ell(P)}.
\end{align*}
Together with \rf{eqint3}, this proves the lemma.
\end{proof}

\begin{lemma}\label{lemderiv4}
Let $\Omega$ be as in Theorem \ref{teopri} and $g_Q$ as in Lemma \ref{lemderiv3}. We have
\begin{multline}\label{eqdew11}
\sum_{Q\in\WW(\Omega)}\int_{3Q}
\left|\imag\left(\partial B\chi_\Omega(w) - \frac1\pi\int_{\R} \frac{A'(x)}{(x+i\,g_Q(x)-w)^2}\,dx
\right)\right|^p\ell(Q)^{p-\alpha p}\,dm(w)\\ \leq c\,\delta^p\,\|A\|_{\dot{B}_{p,p}^{1+\alpha-1/p}}^p.
\end{multline}
\end{lemma}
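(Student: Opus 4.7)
The plan is to combine the pointwise estimate from Lemma \ref{lemderiv3} with a Hölder-type rearrangement and Dorronsoro's characterization. First, I would apply Lemma \ref{lemderiv3} directly inside the $p$-th power: for every $w\in 3Q$,
$$\left|\imag\bigl(\cdots\bigr)\right|^p \le c\,\delta^p\,\biggl(\sum_{P\in\DD(\partial\Omega):P\supset\phi(Q)}\frac{\beta_1(P)}{\ell(P)}\biggr)^p.$$
Integrating in $w\in 3Q$ produces the trivial factor $\ell(Q)^2$, so the left-hand side of \rf{eqdew11} is bounded by
$$c\,\delta^p\sum_{Q\in\WW(\Omega)}\ell(Q)^{2+p-\alpha p}\biggl(\sum_{P\supset\phi(Q)}\frac{\beta_1(P)}{\ell(P)}\biggr)^p.$$

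Next I would handle the inner sum by a weighted Hölder inequality, similar in spirit to the Cauchy--Schwarz step in the proof of Lemma \ref{lemnou9}. Choose a parameter $a$ with $\alpha-1/p<a<1$; such $a$ exists because $\alpha\le 1$ and $\alpha p>1$. Writing $\ell(P)^{-1}=\ell(P)^{-a}\cdot \ell(P)^{-(1-a)}$ and applying Hölder with exponents $p$ and $p'$ gives
$$\biggl(\sum_{P\supset\phi(Q)}\frac{\beta_1(P)}{\ell(P)}\biggr)^p \le \biggl(\sum_{P\supset\phi(Q)}\frac{\beta_1(P)^p}{\ell(P)^{ap}}\biggr)\biggl(\sum_{P\supset\phi(Q)}\frac{1}{\ell(P)^{(1-a)p'}}\biggr)^{p/p'}.$$
Because each $\phi(Q)$ has exactly one ancestor at every larger scale and $1-a>0$, the geometric series on the right is controlled by its largest term $\ell(\phi(Q))^{-(1-a)p'}$, yielding the factor $\ell(Q)^{-(1-a)p}$.

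Then I would swap the order of summation:
$$\sum_{Q}\ell(Q)^{2+p-\alpha p-(1-a)p}\sum_{P\supset\phi(Q)}\frac{\beta_1(P)^p}{\ell(P)^{ap}} = \sum_{P\in\DD(\partial\Omega)}\frac{\beta_1(P)^p}{\ell(P)^{ap}}\sum_{\substack{Q\in\WW(\Omega)\\ \phi(Q)\subset P}}\ell(Q)^{2+ap-\alpha p}.$$
Since $a>\alpha-1/p$ the exponent $2+ap-\alpha p$ is strictly greater than $1$; thus by the standard counting estimate (at scale $2^{-k}$ the number of Whitney cubes with $\phi(Q)\subset P$ is $\lesssim 2^k\ell(P)$), the inner sum is $\lesssim \ell(P)^{2+ap-\alpha p}$. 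After this cancellation what remains is
$$\sum_{P\in\DD(\partial\Omega)}\beta_1(P)^p\,\ell(P)^{2-\alpha p},$$
and since $\partial\Omega$ is the graph of $A$, we have $\beta_1(\partial\Omega,P)\approx \beta_1(A,I)$ with $\ell(I)\approx\ell(P)$. Dorronsoro's characterization from Subsection \ref{subsec2.2} then identifies this sum with $\|A\|_{\dot B_{p,p}^{1+\alpha-1/p}}^p$, giving \rf{eqdew11}.

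The only delicate point I anticipate is verifying that a valid exponent $a$ exists simultaneously satisfying $a<1$ (needed for convergence of the geometric series at large scales) and $2+ap-\alpha p>1$ (needed for the counting sum over Whitney cubes to converge at small scales). These translate to $\alpha-1/p<a<1$, which is consistent exactly under the hypotheses $\alpha\le 1$ and $\alpha p>1$; this is the essential reason these hypotheses appear in Theorem \ref{teopri}.
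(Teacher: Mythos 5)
Your proposal is correct and follows essentially the same route as the paper: apply Lemma \ref{lemderiv3} pointwise, break the $p$-th power of the sum over ancestors by H\"older with a weight in $\ell(P)$, interchange the sums, control the inner sum over Whitney cubes by a geometric/counting argument, and conclude with Dorronsoro's characterization; the paper simply fixes the weight exponent (splitting $\ell(P)^{-1}$ as $\ell(P)^{-(1-\frac1{2p})}\ell(P)^{-\frac1{2p}}$, i.e.\ your $a=1-\frac1{2p}$) rather than carrying a free parameter $a\in(\alpha-\tfrac1p,1)$. The only slight inaccuracy is your closing remark: the existence of such an $a$ needs only $\alpha\le 1$ (indeed $\alpha<1+\tfrac1p$), so the hypothesis $\alpha p>1$ is not what this particular lemma relies on.
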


\begin{proof}
From Lemma \ref{lemderiv3} we infer that
\begin{multline*}
\sum_{Q\in\WW(\Omega)}\int_{3Q}
\left|\imag\left(\partial B\chi_\Omega(w) - \frac1\pi\int_{\R} \frac{A'(x)}{(x+i\,g_Q(x)-w)^2}\,dx
\right)\right|^p\ell(Q)^{p-\alpha p}\,dm(w)\\
\lesssim c\,\delta^p\,\sum_{Q\in\WW(\Omega)}\left(
\sum_{P\in\DD(\partial\Omega):P\supset\phi(Q)} \frac{\beta_1(P)}{\ell(P)}\right)^p\ell(Q)^{2+p-\alpha p}.
\end{multline*}

By Cauchy-Schwartz we have
\begin{align*}
\biggl(
\sum_{P\in\DD(\partial\Omega):P\supset\phi(Q)}\frac{\beta_1(P)}{\ell(P)} \biggr)^p &
\leq \biggl(
\sum_{P\in\DD(\partial\Omega):P\supset\phi(Q)}\frac{\beta_1(P)^p}{\ell(P)^{p-\frac12}} \biggr)
\biggl(
\sum_{P\in\DD(\partial\Omega):P\supset\phi(Q)}\frac{1}{\ell(P)^{\frac{p'}{2p}}} \biggr)^{\frac p{p'}}
\\
&\leq c
\sum_{P\in\DD(\partial\Omega):P\supset\phi(Q)}\frac{\beta_1(P)^p}{\ell(P)^{p-\frac12}} \frac1{\ell(\phi(Q))^{1/2}}.
\end{align*}
Thus,
\begin{align*}
\sum_{Q\in \WW(\Omega)}\biggl(
\sum_{\substack{P\in\DD(\partial\Omega):\\P\supset\phi(Q)}}\frac{\beta_1(P)}{\ell(P)} \biggr)^p\ell(Q)^{p-\alpha p} &\lesssim
\sum_{Q\in \WW(\Omega)}
\sum_{\substack{P\in\DD(\partial\Omega):\\P\supset\phi(Q)}}\frac{\beta_1(P)^p}{\ell(P)^{p-\frac12}}\, \ell(\phi(Q))^{3/2+p-\alpha p}\\
& = 
\sum_{P\in\DD(\partial\Omega)}\frac{\beta_1(P)^p}{\ell(P)^{p-\frac12}}\, \sum_{\substack{Q\in \WW(\Omega):\\\phi(Q)\subset P}}
\ell(\phi(Q))^{3/2+p-\alpha p}.
\end{align*}
Notice that
$$\sum_{Q\in \WW(\Omega):\phi(Q)\subset P}
\ell(\phi(Q))^{3/2+p-\alpha p}\leq c\sum_{\wt Q\in \DD(\partial\Omega):\wt Q\subset P}
\ell\bigl(\wt Q\bigr)^{3/2+p-\alpha p}\leq c\,\ell(P)^{3/2+p-\alpha p},$$
and so, the left side of \rf{eqdew11} is bounded above by
$$
c\,\delta^p\sum_{P\in\DD(\partial\Omega)}\frac{\beta_1(P)^p}{\ell(P)^{\alpha p-2}}.$$
Observe now that the last sum can be written as
$$\sum_{P\in\DD(\partial\Omega)}\frac{\beta_1(P)^p}{\ell(P)^{\alpha p-2}} = \sum_{P\in\DD(\partial\Omega)}\biggl(\frac{\beta_1(P)}{\ell(P)^{\alpha-\frac{1}p}}\biggr)^p\,\ell(P).$$
By \cite[Theorems 1 and 2]{Dorronsoro2}, this is comparable to $\|A\|_{\dot{B}_{p,p}^{1+\alpha-1/p}}^p
\approx \|N\|_{\dot{B}_{p,p}^{\alpha-1/p}}^p$, and so we are done.
\end{proof}

As a corollary of the preceding lemma, using the finite overlapping
of the squares $\{3Q\}_{Q\in\WW(\Omega)}$, we deduce that
\begin{align}\label{eqde56}
\int_\Omega &|\partial B\chi_\Omega(w)|^p\dist(w,
\partial\Omega)^{p-\alpha p}\,dm(w)  
\gtrsim \sum_{Q\in\WW(\Omega)}\int_{3Q}|\imag(\partial B\chi_\Omega)|^p\ell(Q)^{p-\alpha p}\,dm(w)
\\
&\gtrsim
\sum_{Q\in\WW(\Omega)}\int_{3Q}
\left|\int_{\R} \frac{A'(x)}{(x+i\,g_Q(x)-w)^2}\,dx\right|^p\ell(Q)^{p-\alpha p}\,dm(w) \nonumber\\
&\quad -
c\sum_{Q\in\WW(\Omega)}\int_{3Q}
\left|\imag\left(\partial B\chi_\Omega(w) - \int_{\R} \frac{A'(x)}{(x+i\,g_Q(x)-w)^2}\,dx
\right)\right|^p\ell(Q)^{p-\alpha p}\,dm(w)\nonumber\\
& \geq \sum_{Q\in\WW(\Omega)}\int_{3Q}
\left|\int_{\R} \frac{A'(x)}{(x+i\,g_Q(x)-w)^2}\,dx\right|^p\ell(Q)^{p-\alpha p}\,dm(w)- c\,\delta^p\,\|A\|_{\dot{B}_{p,p}^{1+\alpha-1/p}}^p.
\nonumber
\end{align}

We will prove below that
$$\sum_{Q\in\WW(\Omega)}\int_{3Q}
\left|\int_{\R} \imag\left(\frac{A'(x)}{(x+i\,g_Q(x)-w)^2}\right)\,dx
\right|^p\ell(Q)^{p-\alpha p}\,dm(w)\gtrsim \|A\|_{\dot{B}_{p,p}^{1+\alpha-1/p}}^p.$$
Together with \rf{eqde56}, this will yield the Main Lemma \ref{mainlemma}, by taking $\delta$ small enough.

For a given $w=a+bi\in\Omega$ and $Q\in\WW(\Omega)$ with $3Q\ni w$, we have
\begin{equation}\label{eqexp4}
\imag\left(\frac1{(x+i\,g_Q(x)-w)^2}\right) = 
\frac{-2(x-a)(g_Q(x)-b)}{\bigl((x-a)^2 + (g_Q(x)-b)^2\bigr)^2}.
\end{equation}
Now we denote
$$s_Q(w) = b - g_Q(a).$$
Observe that $s_Q(w)\approx \dist(w,L_Q)\approx\dist(w,\partial\Omega).$
Now, for some number $|\theta_Q|\lesssim\delta$,
$$g_Q(x) = \theta_Q(x-a) + g_Q(a) = \theta_Q(x-a) + b -s_Q(w).$$
Writing $\theta$ instead of $\theta_Q$ and $s$ instead of $s_Q(w)$ to simplify notation, the expression in \rf{eqexp4} equals
\begin{align}\label{eqdfe4}
\frac{2(x-a)(s-\theta(x-a))}{\bigl((x-a)^2 + (\theta(x-a)-s)^2\bigr)^2} = \frac2{(1+\theta^2)^2}\,
\frac{s(x-a)-\theta(x-a)^2}{\bigl[\bigl(x-a-\frac{\theta s}{1+\theta^2}\bigr)^2 + \left(\frac s{1+\theta^2}\right)^2\bigr]^2}
\end{align}
Notice that, with the change of variables $y=x-a$, $t=s/(1+\theta^2)$, the denominator in the last fraction can be written as
$\bigl[(y-\theta\,t)^2+t^2\Bigr]^2,$
while the numerator equals
$$
(1+\theta^2)\,t\, y-\theta\,y^2 =
(t-\theta^2t)(y-\theta\,t) + \theta\bigl[t^2-(y-\theta\,t)^2\bigr].
$$
So the last fraction on the right side of \rf{eqdfe4} equals
\begin{equation}\label{eqdfe5}
\frac{(t-\theta^2t)(y-\theta\,t)}{\bigl[(y-\theta\,t)^2+t^2\bigr]^2}
+\theta\frac{t^2-(y-\theta\,t)^2}{\bigl[(y-\theta\,t)^2+t^2\bigr]^2}.
\end{equation}

From \rf{eqexp4}, \rf{eqdfe4}, and \rf{eqdfe5}, using that $|\theta|\leq \|A'\|_\infty\leq \delta\ll 1$ and recalling that
$\theta$ depends on $Q\ni w$, but not on $x$,
we obtain
\begin{align*}
\left|\int_{\R} \imag\left(\frac{A'(x)}{(x+i\,g_Q(x)-w)^2}\right)\,dx
\right|& \geq
\biggl|\int_{\R} \frac{\frac s{1+\theta^2}\,\left(x-a-\frac{\theta s}{1+\theta^2}\right)}{\bigl[\bigl(x-a-\frac{\theta s}{1+\theta^2}\bigr)^2 + \left(\frac s{1+\theta^2}\right)^2\bigr]^2}\,A'(x)
\,dx
\biggr|\\
&\quad
- c\,\delta\,
\biggl|\int_{\R} \frac{\left(\frac s{1+\theta^2}\right)^2-\left(x-a-\frac{\theta s}{1+\theta^2}\right)^2}{\bigl[\bigl(x-a-\frac{\theta s}{1+\theta^2}\bigr)^2 + \left(\frac s{1+\theta^2}\right)^2\bigr]^2}\,A'(x)
\,dx\biggr|\\
& =: I_1(w,Q) - c\,\delta\,I_2(w,Q).
\end{align*}
Therefore,
\begin{align}\label{eqde59}
& \sum_{Q\in\WW(\Omega)}  \int_{3Q}
\left|\int_{\R} \imag\left(\frac{A'(x)}{(x+i\,g_Q(x)-w)^2}\right)\,dx
\right|^p\ell(Q)^{p-\alpha p}\,dm(w)\\
& \quad\gtrsim \sum_{Q\in\WW(\Omega)}\int_{3Q}I_1(w,Q)^p \ell(Q)^{p-\alpha p}\,dm(w) - c\delta^p\sum_{Q\in\WW(\Omega)}\int_{3Q}I_2(w,Q)^p\ell(Q)^{p-\alpha p}\, dm(w)
.\nonumber
\end{align}

\begin{lemma}\label{lem5}
Under the assumptions and notation above,
$$\sum_{Q\in\WW(\Omega)}\int_{3Q}I_1(w,Q)^p \ell(Q)^{p-\alpha p}\,dm(w) \gtrsim 
\int_0^\infty 
\|t^{\frac1p-1-\alpha}\,\psi_t * A\|_p^p \,\frac{dt}t,$$
where
$$\psi(x) = \frac{3x^2-1}{(x^2+1)^3}$$
and $\psi_t(x)=t^{-1}\psi(t^{-1}x)$.
Moreover, 
\begin{equation}\label{eqdr521}
\int_0^\infty 
\|t^{\frac1p-1-\alpha}\,\psi_t * A\|_p^p \,\frac{dt}t\gtrsim
\|A\|_{\dot{B}_{p,p}^{1+\alpha-1/p}}^p.
\end{equation}
\end{lemma}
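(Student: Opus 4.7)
The plan is to first rewrite $I_1(w,Q)$ as a rescaled convolution involving $\psi$, then perform a change of variables and cover $\R\times(0,\infty)$. With $a' = a + \theta s/(1+\theta^2)$ and $t = s/(1+\theta^2)$, the kernel in $I_1$ equals $t^{-1}\varphi_t(x - a')$, where $\varphi(u) = u/(u^2+1)^2$. A direct computation gives $\varphi'(u) = (1-3u^2)/(u^2+1)^3 = -\psi(u)$, hence $(\varphi_t)' = -t^{-1}\psi_t$. Since $A$ has compact support and $\varphi$ is odd, integration by parts in $\int_\R \varphi_t(x-a')\,A'(x)\,dx$ yields
\[
\int_\R \frac{t\,(x-a')}{((x-a')^2 + t^2)^2}\,A'(x)\,dx = t^{-2}\,(\psi_t * A)(a'),
\]
so $I_1(w, Q) = t^{-2}\,|(\psi_t * A)(a')|$. (A sanity check with $A(x)=x^2$ gives $\pi$ on both sides, independent of $t$.)

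Next, I would restrict $\sum_Q \int_{3Q}$ below by $\sum_Q \int_Q$ (which equals the integral over $\Omega$, since the Whitney cubes are pairwise disjoint), and change variables on each $Q$ via the affine map $\Phi_Q : w = a+ib \mapsto (a', t)$. Its Jacobian equals $(1+2\theta_Q^2)/(1+\theta_Q^2)^2 \approx 1$ uniformly for $|\theta_Q| \le \delta \ll 1$, and $\ell(Q) \approx \dist(w, \partial\Omega) \approx t$ for $w \in Q$ (using $|g_Q(a) - A(a)| \lesssim \delta\,\ell(Q)$ on the $x$-projection of $Q$, which follows from the smallness of $\delta$ together with $\beta_1(\phi(Q)) \lesssim \delta$). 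Hence
\[
\sum_{Q}\int_Q I_1(w,Q)^p\,\ell(Q)^{p-\alpha p}\,dm(w) \approx \sum_{Q} \int_{\Phi_Q(Q)} t^{-p-\alpha p}\,|(\psi_t * A)(a')|^p\,da'\,dt.
\]
The images $\{\Phi_Q(Q)\}$ cover $\R \times (0, \infty)$ from below: for any $(a_0, t_0)$, the point $w_0 = a_0 + i(A(a_0) + t_0)$ belongs to a Whitney cube $Q_0$ with $\ell(Q_0) \approx t_0$, and $\Phi_{Q_0}$ maps $Q_0$ to a ``skewed cube'' of size $\approx t_0$ centered within $O(\delta t_0)$ of $(a_0, t_0)$. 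Combined with the bounded geometry of the Whitney decomposition, this gives
\[
\sum_Q \int_Q I_1(w,Q)^p\,\ell(Q)^{p-\alpha p}\,dm(w) \gtrsim \int_0^\infty \!\!\int_\R t^{-p-\alpha p}\,|(\psi_t * A)(a')|^p\,da'\,dt = \int_0^\infty \bigl\|t^{1/p - 1 - \alpha}\,\psi_t * A\bigr\|_p^p\,\frac{dt}{t},
\]
which is the first claim.

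The second inequality is a continuous Littlewood--Paley characterization of $\dot B_{p,p}^{s}$ with $s = 1 + \alpha - 1/p \in (1, 2)$ (using $\alpha p > 1$ and $\alpha \le 1$). A direct Fourier computation gives
\[
\hat\psi(\xi) = -\tfrac{\pi}{2}\,\xi^2\,e^{-|\xi|},
\]
so $\psi$ has two vanishing moments ($\int\psi = 0$ and $\int x\psi = 0$ by evenness), nonzero second moment $\int x^2\psi\,dx = \pi$, Schwartz-type decay, and $|\hat\psi(\xi)| > 0$ for all $\xi \ne 0$. These ingredients suffice for a Calder\'on-type reproducing formula argument, yielding $\int_0^\infty t^{-sp-1}\|\psi_t * A\|_p^p\,dt \gtrsim \|A\|_{\dot B_{p,p}^{s}}^p$; since $sp + 1 = p + \alpha p$, this is precisely \rf{eqdr521}.

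The main technical obstacle will be in the second paragraph: quantitatively showing that $\{\Phi_Q(Q)\}$ covers $\R \times (0, \infty)$ (up to a negligible set) with constants uniform in $\delta$ as $\delta \to 0$, so that the lower bound passes cleanly from the sum over Whitney cubes to the full integral over the upper half space.
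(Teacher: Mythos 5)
Your reduction of $I_1$ to the convolution form is correct ($K_t=t^{-1}\varphi_t$, $(K_t)'=-t^{-2}\psi_t$, so $I_1(w,Q)=t^{-2}|\psi_t*A(a')|$ with $t=s/(1+\theta^2)$, $a'=a+\theta s/(1+\theta^2)$), and your treatment of \rf{eqdr521} is in substance the same as the paper's: the paper makes the ``Calder\'on reproducing formula'' explicit by writing $\eta=\psi_s*\tau^s$ for $s\in[1,2]$ (dividing on the Fourier side, using $\wh\psi(\xi)=c\,\xi^2e^{-2\pi|\xi|}\neq0$ off the origin) and averaging in $s$; you would need to include that short argument rather than only cite it, but there is no mathematical issue there.

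The genuine gap is exactly the step you flag: after you discard the enlargement and keep only $\sum_Q\int_Q$, the sets $\{\Phi_Q(Q)\}_{Q\in\WW(\Omega)}$ need \emph{not} cover $\R\times(0,\infty)$, not even up to a null set. Each $\Phi_Q$ differs from the global shear $(a,b)\mapsto(a,b-A(a))$ by a $Q$-dependent translation/shear of size $\sim\delta\,\ell(Q)$; two adjacent Whitney squares of comparable size can be displaced in opposite directions, opening a strip of width $\sim\delta\,\ell(Q)$ (hence of positive measure, a fixed $\delta$-fraction locally) that is hit by no $\Phi_{Q'}(Q')$. Since there is no pointwise control of $|\psi_t*A(a')|$, mass could in principle sit on these missed strips, so the lower bound does not follow from ``covering up to a negligible set''---your own computation only places $(a_0,t_0)$ within $O(\delta t_0)$ of $\Phi_{Q_0}(Q_0)$, not inside it. The fix is cheap and you already have it in the statement: do \emph{not} shrink $3Q$ to $Q$. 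For $(a_0,t_0)$ with $w_0=(a_0,A(a_0)+t_0)\in Q_0$, the point $a=a_0-\theta_{Q_0}t_0$, $b=g_{Q_0}(a)+(1+\theta_{Q_0}^2)t_0$ satisfies $\Phi_{Q_0}(a,b)=(a_0,t_0)$ and lies within $C\delta\,\ell(Q_0)$ of $w_0$, hence in $3Q_0$ for $\delta$ small; so $\bigcup_Q\Phi_Q(3Q)\supset\R\times(0,\infty)$ and nonnegativity of the integrand gives the bound (no bounded-overlap estimate is needed for this direction). For comparison, the paper avoids any two-dimensional covering argument: it performs successive one-variable monotone substitutions inside each $3Q$ (replacing the kernel width $\frac{t-g_Q(x)}{1+\theta^2}$ by $t-A(a_1)$, then removing the horizontal shift, then replacing $A(a_1)$ by $A(x)$), at the cost of shrinking $3Q$ back to $Q$, so that the resulting integrand $|K_{t-A(x)}*A'(x)|^p$ no longer depends on $Q$; the exact Whitney tiling then gives the integral over $\Omega$, and the single global shear $t\mapsto t-A(x)$ (Jacobian $1$) produces the upper half-plane integral. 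Either route works; yours needs the $3Q$ (or $2Q$) retained to make the covering true.
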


\begin{proof}
Fix a square $Q\in\WW(\Omega)$, and set $Q=(a_1,a_2]\times (b_1,b_2]$.
Recall that, for $w=a+ib\in3Q$,
$$I_1(w,Q)=\biggl|\int_{\R} \frac{\frac s{1+\theta^2}\,\left(x-a-\frac{\theta s}{1+\theta^2}\right)}{\bigl[\bigl(x-a-\frac{\theta s}{1+\theta^2}\bigr)^2 + \left(\frac s{1+\theta^2}\right)^2\bigr]^2}\,A'(x)
\,dx\biggr|,$$
where $s\equiv s_Q(w) = b - g_Q(a),$ and 
$\theta\equiv\theta_Q$ is the slope of the affine line defined by $g_Q$, which approximates $\partial\Omega\cap p_1(3Q)$.
For $t>0$, consider the kernel 
$$K_t(x) = \frac{t\,x}{[x^2 + t^2]^2}.$$
Observe that
$$I_1(w,Q)= \Bigl|K_{\frac s{1+\theta^2}} * A'\,\Bigl(a+\frac{\theta s}{1+\theta^2}\Bigr)\Bigr|.$$

We have
$$\int_{3Q}I_1(w,Q)^p \,dm(w) = \int_{a_1-\ell(Q)}^{a_2+\ell(Q)} \int_{t_1-\ell(Q)}^{t_2+\ell(Q)}
\Bigl|K_{\frac s{1+\theta^2}} * A'\,\Bigl(x+\frac{\theta s}{1+\theta^2}\Bigr)\Bigr|^p \,dt\,dx.$$
Observe now that, assuming $\delta$ small enough, for each $x\in p_1(3Q)$ we have
\begin{align*}
\int_{t_1-\ell(Q)}^{t_2+\ell(Q)}
\Bigl|K_{\frac s{1+\theta^2}} * A'\,\Bigl(x+\frac{\theta s}{1+\theta^2}\Bigr)\Bigr|^p \,dt 
&= 
\int_{t_1-\ell(Q)}^{t_2+\ell(Q)}
\Bigl|K_{\frac{t-g_Q(x)}{1+\theta^2}} * A'\,\Bigl(x+\frac{\theta s}{1+\theta^2}\Bigr)\Bigr|^p \,dt 
\\
&\geq
\int_{t_1-\frac12\ell(Q)}^{t_2+\frac12\ell(Q)}
\Bigl|K_{t-A(a_1)} * A'\,\Bigl(x+\frac{\theta s}{1+\theta^2}\Bigr)\Bigr|^p \,dt 
.
\end{align*}
Also, it follows easily  that 
\begin{align*}
\int_{a_1-\ell(Q)}^{a_2+\ell(Q)} 
\int_{t_1-\frac12\ell(Q)}^{t_2+\frac12\ell(Q)}&
\Bigl|K_{t-A(a_1)} * A'\,\Bigl(x+\frac{\theta s}{1+\theta^2}\Bigr)\Bigr|^p \,dt\,dx\\
&\geq 
\int_{a_1}^{a_2} 
\int_{t_1-\frac12\ell(Q)}^{t_2+\frac12\ell(Q)}
\bigl|K_{t-A(a_1)} * A'\,(x)\bigr|^p \,dt\,dx\\
& \geq
\int_{a_1}^{a_2} 
\int_{t_1}^{t_2}
\bigl|K_{t-A(x)} * A'\,(x)\bigr|^p \,dt\,dx.
\end{align*}
As a consequence,
\begin{align}\label{eqdr5}
\sum_{Q\in\WW(\Omega)}\int_{3Q}I_1(w,Q)^p \ell(Q)^{p-\alpha p}\,dm(w)
&\gtrsim \sum_{Q\in\WW(\Omega)} \iint_{(x,t)\in Q} 
\bigl|K_{t-A(x)} * A'\,(x)\bigr|^p t^{p-\alpha p}\,dt\,dx \\
& = \iint_{(x,t)\in \Omega} 
\bigl|K_{t-A(x)} * A'\,(x)\bigr|^p t^{p-\alpha p}\,dt\,dx \nonumber\\
& = \int_{x\in\R}\int_0^\infty 
\bigl|K_{t} * A'\,(x)\bigr|^p t^{p-\alpha p}\,dt\,dx.\nonumber
\end{align}

Notice now that $K_{t} * A' = (K_t)'*A$. It is easy to check that $(K_t)'=- t^{-2}\,\psi_t$,
and then the first inequality claimed in the lemma follows just writing
$$\bigl|K_{t} * A'\,(x)\bigr|^p\,t^{p-\alpha p} = \bigl|t^{\frac1p-1-\alpha}\,\psi_t * A\,(x)\bigr|^p \,\frac1t.$$

To prove \rf{eqdr521}, 
first we calculate the Fourier transform of $\psi_t$. Notice that 
$K_t= c\,(P_t)'$, where $P_t$ is the
Poisson kernel and $c$ is some absolute constant. So,
\begin{equation}\label{eqpsi9}
\wh{\psi_t}(\xi) = c\,t^2\xi^2 \,e^{-2\pi\,t\,|\xi|}.
\end{equation}

Consider a radial $\CC^\infty$ function $\eta$ whose Fourier transform is supported in the annulus
$A(0,1/2,3/2)$, and setting $\eta_{(k)}(x)=\eta_{2^{-k}}(x) = 2^k\,\eta(2^k\,x)$,
\begin{equation}\label{eqnu3}
\sum_{k\in\Z} \wh{\eta_{(k)}}(\xi)=1\qquad \mbox{for all $\xi\neq0$.}
\end{equation}
Then  we have
$$\|A\|_{\dot{B}_{p,p}^{1+\alpha-1/p}}^p\approx \sum_{k\in\Z}\|2^{k(1+\alpha-\frac1p)}\eta_{(k)}*A\|_p^p.$$

Notice now that there  exists some Schwartz function $\tau$ such that 
$\eta = \psi * \tau$. Indeed, we only have to take
$$\wh \tau(\xi) = \frac{\wh \eta(\xi)}{c\,\xi^2 \,e^{-2\pi\,|\xi|}},$$
so that $\wh \tau\in\CC_c^\infty$. Similarly, for any $s\in[1,2]$, we take 
some Schwartz function $\tau^s$ such that 
$$\eta = \psi_s * \tau^s.$$
Then, for every $k\in\Z$ and every $s\in[1,2]$, we have
$$\eta_{(k)} *A = \psi_{s2^{-k}} * \tau^s_{2^{-k}} *A,$$
where $\tau^s_{2^{-k}}(x) = 2^k\tau^s(2^kx)$. Thus,
$$\|\eta_{(k)} *A \|_p \leq \|\tau^s_{2^{-k}}\|_1\,\|\psi_{s2^{-k}} * A\|_p
\leq c\,\|\psi_{s2^{-k}} * A\|_p,$$
where we took into account that $\|\tau^s_{2^{-k}}\|_1=\|\tau^s\|_1\leq c$ for some 
constant independent of $s$. Then we deduce
$$\|A\|_{\dot{B}_{p,p}^{1+\alpha-1/p}}^p\lesssim \sum_{k\in\Z}\|2^{k(1+\alpha-\frac1p)}\psi_{s2^{-k}}*A\|_p^p
\qquad\mbox{for all $s\in[1,2]$.}$$
As a consequence, by Fubini and a change of variables,
\begin{align*}
\|A\|_{\dot{B}_{p,p}^{1+\alpha-1/p}}^p&\lesssim  \sum_{k\in\Z}\int_1^2\|2^{k(1+\alpha-\frac1p)}\psi_{s2^{-k}}*A\|_p^p
\,\frac{ds}s \\& = \sum_{k\in\Z}
\int_{2^{-k}}^{2^{-k+1}}\|2^{k(1+\alpha-\frac1p)}\psi_{t}*A\|_p^p\frac{dt}t
\approx \int_0^\infty 
\|t^{\frac1p-1-\alpha}\,\psi_t * A\|_p^p \,\frac{dt}t.
\end{align*}
This proves the lemma.
\end{proof}

\begin{lemma}\label{lem6}
Under the assumptions and notation above,
$$\sum_{Q\in\WW(\Omega)}\int_{3Q}I_2(w,Q)^p \ell(Q)^{p-\alpha p}\,dm(w) \lesssim 
\int_0^\infty 
\|t^{\frac1p-1-\alpha}\,\psi_t * A\|_p^p \,\frac{dt}t,$$
where $\psi_t$ is as in Lemma \ref{lem5}.
\end{lemma}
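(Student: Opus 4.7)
The plan is to mimic the structure of Lemma \ref{lem5} (in the reverse direction, now seeking an upper rather than a lower bound), with the kernel
$$\wt K_t(x) := \frac{t^2 - x^2}{(x^2+t^2)^2}$$
replacing $K_t(x)=tx/(x^2+t^2)^2$, so that $I_2(w,Q)= \bigl|\wt K_{s/(1+\theta^2)} * A'(a + \theta s/(1+\theta^2))\bigr|$ where $w=a+ib$, $s=s_Q(w)$, and $\theta=\theta_Q$.

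First, I would apply the same linear change of variables $(a,b)\mapsto(\tilde a,\tilde t)=(a+\theta s/(1+\theta^2),\,s/(1+\theta^2))$ used in the proof of Lemma \ref{lem5}, whose Jacobian $(1+\theta^2)^{-1}$ is bounded above and below since $|\theta|\leq\delta$. Since we want an upper bound, I enlarge the target region slightly (the image of $3Q$ is contained in, say, a translate of $5Q$) and use the finite overlap of these dilated Whitney squares together with the equivalence $\ell(Q)\approx\dist(w,\partial\Omega)\approx \tilde t$ to get
$$\sum_{Q\in\WW(\Omega)}\int_{3Q}I_2(w,Q)^p\,\ell(Q)^{p-\alpha p}\,dm(w) \lesssim \int_{\R}\int_0^\infty\bigl|\wt K_t * A'(x)\bigr|^p\, t^{p-\alpha p}\,dt\,dx.$$

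Next, I would relate $\wt K_t * A'$ to $\psi_t * A$ via the Hilbert transform $H$. Writing $P_t(x)=t/(x^2+t^2)$, so that $\wh{P_t}(\xi)=\pi\,e^{-2\pi t|\xi|}$, one has $K_t=-\tfrac12 (P_t)'$ and $\wt K_t=-\partial_t P_t$, giving
$$\wh{K_t}(\xi)=-\pi^2 i\xi\, e^{-2\pi t|\xi|},\qquad \wh{\wt K_t}(\xi)=2\pi^2|\xi|\, e^{-2\pi t|\xi|}.$$
Comparing these Fourier multipliers shows $\wt K_t = -2\,H(K_t)$. Combining this with the identity $(K_t)' = -t^{-2}\psi_t$ already established in Lemma \ref{lem5} yields $\wt K_t * A' = 2\,t^{-2}\,H(\psi_t * A)$, and hence, by the $L^p$-boundedness of $H$ for $1<p<\infty$,
$$\|\wt K_t * A'\|_p \lesssim t^{-2}\,\|\psi_t * A\|_p.$$

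Inserting this bound into the double integral and simplifying $t^{-2p}\cdot t^{p-\alpha p}=t^{p(1/p-1-\alpha)}\cdot t^{-1}$ gives
$$\int_{\R}\int_0^\infty\bigl|\wt K_t * A'(x)\bigr|^p\, t^{p-\alpha p}\,dt\,dx \lesssim \int_0^\infty \|t^{1/p-1-\alpha}\,\psi_t * A\|_p^p\,\frac{dt}{t},$$
which is the asserted estimate. The main delicate point is the bookkeeping for the change of variables and finite overlap in the first step; once that is in place, the Hilbert transform identification and the scaling computation are essentially routine.
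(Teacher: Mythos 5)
Your proposal is correct and follows essentially the same route as the paper: both reduce the Whitney sum to $\int_{\R}\int_0^\infty |J_t*A'(x)|^p\,t^{p-\alpha p}\,dt\,dx$ (your $\wt K_t$ is the paper's $J_t$) by a change of variables with bounded overlap, and both then recognize, via the Poisson/conjugate Poisson Fourier transforms, that this kernel acting on $A'$ is $t^{-2}$ times the Hilbert transform of $\psi_t*A$, concluding with the $L^p$-boundedness of $H$. The paper phrases the identification as $\wh{\vphi_t}=c\,\operatorname{sgn}(\xi)\,\wh{\psi_t}$ with $\vphi_t=t^2(J_t)'$, whereas you write $\wt K_t=-2H(K_t)$ and use $(K_t)'=-t^{-2}\psi_t$; this is the same computation in a different order.
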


\begin{proof}
The arguments are quite similar to the ones for Lemma \ref{lem5}.
Consider $Q\in\WW(\Omega)$, and set $Q=(a_1,a_2]\times (b_1,b_2]$.
Recall that, for $w=a+ib\in3Q$,
$$I_2(w,Q)=\biggl|\int_{\R} 
\frac{\left(\frac s{1+\theta^2}\right)^2-\left(x-a-\frac{\theta s}{1+\theta^2}\right)^2}{\bigl[\bigl(x-a-\frac{\theta s}{1+\theta^2}\bigr)^2 + \left(\frac s{1+\theta^2}\right)^2\bigr]^2}
\,A'(x)
\,dx\biggr|,$$
where $s\equiv s_Q(w) = b - g_Q(a),$ and 
$\theta\equiv\theta_Q$ is the slope of the affine line defined by $g_Q$.
For $t>0$, consider the kernel 
$$J_t(x) = \frac{t^2-x^2}{[x^2 + t^2]^2}.$$
Now we have
$$I_2(w,Q)= \Bigl|J_{\frac s{1+\theta^2}} * A'\,\Bigl(a+\frac{\theta s}{1+\theta^2}\Bigr)\Bigr|,$$
and so
$$\int_{3Q}I_2(w,Q)^p \,dm(w) = \int_{a_1-\ell(Q)}^{a_2+\ell(Q)} \int_{t_1-\ell(Q)}^{t_2+\ell(Q)}
\Bigl|J_{\frac s{1+\theta^2}} * A'\,\Bigl(x+\frac{\theta s}{1+\theta^2}\Bigr)\Bigr|^p \,dt\,dx.$$
Then, assuming $\delta$ small enough, for each $x\in p_1(3Q)$ we have
\begin{align*}
\int_{t_1-\ell(Q)}^{t_2+\ell(Q)}
\Bigl|J_{\frac s{1+\theta^2}} * A'\,\Bigl(x+\frac{\theta s}{1+\theta^2}\Bigr)\Bigr|^p \,dt 
&= 
\int_{t_1-\ell(Q)}^{t_2+\ell(Q)}
\Bigl|J_{\frac{t-g_Q(x)}{1+\theta^2}} * A'\,\Bigl(x+\frac{\theta s}{1+\theta^2}\Bigr)\Bigr|^p \,dt 
\\
&\leq
\int_{t_1-\frac32\ell(Q)}^{t_2+\frac32\ell(Q)}
\Bigl|J_{t-A(a_1)} * A'\,\Bigl(x+\frac{\theta s}{1+\theta^2}\Bigr)\Bigr|^p \,dt 
.
\end{align*}
Also, it follows easily  that 
\begin{align*}
\int_{a_1-\ell(Q)}^{a_2+\ell(Q)} 
\int_{t_1-\frac32\ell(Q)}^{t_2+\frac32\ell(Q)}&
\Bigl|J_{t-A(a_1)} * A'\,\Bigl(x+\frac{\theta s}{1+\theta^2}\Bigr)\Bigr|^p \,dt\,dx\\
&\leq 
\int_{a_1-2\ell(Q)}^{a_2+2\ell(Q)} 
\int_{t_1-\frac32\ell(Q)}^{t_2+\frac32\ell(Q)}
\bigl|J_{t-A(a_1)} * A'\,(x)\bigr|^p \,dt\,dx\\
& \leq
\int_{a_1-2\ell(Q)}^{a_2+2\ell(Q)} 
\int_{t_1-2\ell(Q)}^{t_2+2\ell(Q)}
\bigl|J_{t-A(x)} * A'\,(x)\bigr|^p \,dt\,dx.
\end{align*}
As a consequence,
\begin{align}\label{eqdr5'}
\sum_{Q\in\WW(\Omega)}\int_{3Q}I_2(w,Q)^p \ell(Q)^{p-\alpha p}\,dm(w)
&\leq\sum_{Q\in\WW(\Omega)} \iint_{(x,t)\in 5Q} 
\bigl|J_{t-A(x)} * A'\,(x)\bigr|^p \,t^{p-\alpha p}\,dt\,dx \\
& \approx \iint_{(x,t)\in \Omega} 
\bigl|J_{t-A(x)} * A'\,(x)\bigr|^p \,t^{p-\alpha p}\,dt\,dx \nonumber\\
& = \int_{x\in\R}\int_0^\infty 
\bigl|J_{t} * A'\,(x)\bigr|^p \,t^{p-\alpha p}\,dt\,dx.\nonumber
\end{align}

We denote $\vphi_t =t^2 (J_t)'$, so that
$J_{t} * A' = (J_t)'*A= t^{-2}\,\vphi_t*A$. Moreover, it turns out that
$\vphi_t(x)=t^{-1}\vphi(t^{-1}x)$, where $\vphi\equiv\vphi_1$. Then we have
\begin{equation}\label{eqdr6'}
\int_{x\in\R}\int_0^\infty 
\bigl|J_{t} * A'\,(x)\bigr|^p \,t^{p-\alpha p}\,dt\,dx = \int_0^\infty 
\|t^{\frac1p-1-\alpha}\,\vphi_t * A\|_p^p \,\frac{dt}t.
\end{equation}

To calculate the Fourier transform of $\vphi$, notice that 
$J_t(x)= c\,(Q_t)'(x)$, where $Q_t$ is the
conjugated Poisson kernel and $c$ is some absolute constant. So,
$$\wh{J_t}(\xi) = c\,\xi\,\operatorname{sgn}(\xi) e^{-2\pi t|\xi|} = 
c\,|\xi|\, e^{-2\pi t|\xi|},$$
and
$$\wh{\vphi_t}(\xi) = c\,t^2\xi\,|\xi|\, e^{-2\pi \,t|\xi|}.$$
So, recalling \rf{eqpsi9}, it turns out that $\wh{\vphi_t}(\xi) =c\,\operatorname{sgn}(\xi)\,\wh{\psi_t}(\xi).$ That is, $\vphi_t$ is the Hilbert transform of $\psi_t$, modulo a constant factor.
Thus, denoting by $H$ the Hilbert transform,
$$\|\vphi_t*A\|_p =c\,\|H(\psi_t*A)\|_p\leq c\,\|\psi_t*A\|_p,$$
and so, by \rf{eqdr6'},
$$\int_{x\in\R}\int_0^\infty 
\bigl|J_{t} * A'\,(x)\bigr|^p \,t^{p-\alpha p}\,dt\,dx \leq c\int_0^\infty 
\|t^{\frac1p-1-\alpha}\,\psi_t * A\|_p^p \,\frac{dt}t,$$
and the lemma follows.
\end{proof}

\begin{proof}[\bf Proof of the Main Lemma \ref{mainlemma}]
By \rf{eqde59} and Lemmas \ref{lem5} and \ref{lem6}, assuming $\delta$ small enough, we get
\begin{multline*}
\sum_{Q\in\WW(\Omega)}\int_{3Q}\left|\int_{\R} \frac{A'(x)}{(x+i\,g_Q(x)-w)^2} \,dx\right|^p
\ell(Q)^{p-\alpha p}\,dm(w)\\
 \gtrsim  (1-c\,\delta^p)
\int_0^\infty 
\|t^{\frac1p-1-\alpha}\,\psi_t * A\|_p^p \,\frac{dt}t
 \gtrsim \|A\|_{\dot{B}_{p,p}^{1+\alpha-1/p}}^p.
\end{multline*}
Together with \rf{eqde56}, this implies that
$$
\int_\Omega|\partial B\chi_\Omega(z)|^p\,\dist(z,
\partial\Omega)^{p-\alpha p}dm(z) \gtrsim 
\|A\|_{\dot{B}_{p,p}^{1+\alpha-1/p}}^p -c\,\delta^p\|A\|_{\dot{B}_{p,p}^{1+\alpha-1/p}}^p\gtrsim
\|A\|_{\dot{B}_{p,p}^{1+\alpha-1/p}}^p,
$$
for $\delta$ small enough again.
\end{proof}


\vvv
\section{The proof of Theorem \ref{teodom}}\label{sec5}

Let $\Omega\subset\C$ be a bounded domain which is $(\delta,R)$-Lipschitz.
We have to show that
\begin{equation}\label{eqeq89'}
\|N\|_{\dot{B}_{p,p}^{\alpha-1/p}(\partial\Omega)}^p\lesssim\|B(\chi_\Omega)\|_{\dot W^{\alpha,p}(\Omega)}^p+ \HH^1(\partial\Omega)^{2-\alpha p}.
\end{equation}
To this end we will prove:

\begin{lemma}\label{mainlemma2}
Let $\Omega\subset\C$ be a $(\delta,R)$-Lipschitz domain. Let $1<p<\infty$ and $0<\alpha\leq1$ be such
that $\alpha p>1$. If $\delta$ is small enough, then
\begin{equation}\label{eqmain73'}
\|N\|_{\dot B_{p,p}^{\alpha-1/p}(\partial\Omega)}^p\lesssim \int_\Omega|\partial B\chi_\Omega(z)|^p\,\dist(z,
\partial\Omega)^{p-\alpha p}\,dm(z) + \HH^1(\partial\Omega)^{2-\alpha p}.
\end{equation}
\end{lemma}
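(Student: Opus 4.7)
The plan is to reduce \rf{eqmain73'} to the already-established Main Lemma \ref{mainlemma} by a localization argument on $\partial\Omega$. Since $\Omega$ is $(\delta,R)$-Lipschitz, I would cover $\partial\Omega$ by a bounded-overlap family $\{B_j\}$ of balls $B_j = B(z_j, c_0 R)$ with $z_j \in \partial\Omega$, with $c_0 > 0$ small enough that $\partial\Omega \cap 10 B_j$ is, after a rotation, the graph of a Lipschitz function $A_j$ with $\|A_j'\|_\infty \leq \delta$. Using the double-integral characterization of the Besov norm on $\partial\Omega$ from \rf{eqbpp1} (adapted to chord-arc curves) together with $|N|\leq 1$, I split
$$\|N\|_{\dot B_{p,p}^{\alpha-1/p}(\partial\Omega)}^p\lesssim \sum_j \iint_{(\partial\Omega\cap B_j)^2}\frac{|N(x)-N(y)|^p}{|x-y|^{\alpha p}}\,d\HH^1(x)\,d\HH^1(y) + c\,\HH^1(\partial\Omega)^{2-\alpha p},$$
where the error term absorbs the contribution from pairs $(x,y)$ with $|x-y|\gtrsim R$ (using that $\HH^1(\partial\Omega)/R$ is controlled by the Lipschitz character).

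For each $j$, I would construct a compactly supported Lipschitz function $\widetilde A_j:\R\to\R$ with $\|\widetilde A_j'\|_\infty \lesssim \delta$ that coincides with $A_j$ on the projection of $2B_j$, e.g., by subtracting the affine tangent at $z_j$ and multiplying by a smooth cutoff (any bounded overshoot in slope coming from the cutoff is harmless because it can be re-absorbed by choosing the base $\delta$ smaller by a universal factor). Let $\widetilde \Omega_j=\{(x,y):y>\widetilde A_j(x)\}$ be the corresponding special Lipschitz domain. Then $\widetilde\Omega_j$ agrees with $\Omega$ inside $2B_j$, so by Lemma \ref{lemanorm} the local Besov-type integral above is controlled by $\|\widetilde A_j\|_{\dot B_{p,p}^{1+\alpha-1/p}}^p$ plus a tail error of the form $\delta^p R^{1-\alpha p+p}$ coming from the cutoff region (which by Lemma \ref{lemaux2} is itself dominated by $\HH^1(\partial\Omega)^{2-\alpha p}$). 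Applying the Main Lemma \ref{mainlemma} to $\widetilde\Omega_j$ then bounds $\|\widetilde A_j\|_{\dot B_{p,p}^{1+\alpha-1/p}}^p$ by
$$\int_{\widetilde \Omega_j}|\partial B\chi_{\widetilde\Omega_j}(z)|^p\,\dist(z,\partial\widetilde\Omega_j)^{p-\alpha p}\,dm(z).$$

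The final step is to transfer this integral back to the bounded domain $\Omega$. Split the region as $(\widetilde\Omega_j\cap B_j)\cup(\widetilde\Omega_j\setminus B_j)$. On $\widetilde\Omega_j\cap B_j$, the distance functions to the two boundaries agree, and Lemma \ref{lemderiv} (used in the half-plane form employed in its proof, which handles the unboundedness of $\partial\widetilde\Omega_j$) gives
$$\bigl|\partial B\chi_{\widetilde\Omega_j}(z)-\partial B\chi_\Omega(z)\bigr|\lesssim \frac{1}{R}+\frac{1}{\diam(\Omega)} \quad \text{for } z\in \widetilde\Omega_j\cap \tfrac12 B_j,$$
because the symmetric difference of the boundaries lies at distance $\gtrsim R$ from $z$; raising to the $p$-th power and integrating against $\dist(z,\partial\Omega)^{p-\alpha p}\,dm$ yields an error of order $R^{2-\alpha p}$, which after summation over $j$ is absorbed into $\HH^1(\partial\Omega)^{2-\alpha p}$. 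On the tail region $\widetilde\Omega_j\setminus B_j$, the boundary of $\widetilde\Omega_j$ is essentially a straight line (the bump $\widetilde A_j$ is supported near the projection of $B_j$), so Lemma \ref{lemnou9} forces rapid decay of $|\partial B\chi_{\widetilde\Omega_j}|$ away from $B_j$; the resulting tail integral is bounded by $c\,\delta^p\,\|\widetilde A_j\|_{\dot B_{p,p}^{1+\alpha-1/p}}^p$, which for $\delta$ small is absorbed into the left-hand side via the already-proved Corollary \ref{coro40}-style equivalence.

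The main obstacle is the comparison in the last step: the auxiliary special Lipschitz domain $\widetilde\Omega_j$ is unbounded while $\Omega$ is bounded, so one has to control the contribution of $\partial\widetilde\Omega_j$ far from $B_j$ without losing the smallness needed to absorb error terms. This is precisely what the half-plane comparison embedded in Lemmas \ref{lemmaPi}, \ref{lemderiv}, and the $\beta_1$-based pointwise bound in Lemma \ref{lemnou9} accomplish: near $B_j$ one uses a cheap $1/R$ pointwise estimate (giving the additive $\HH^1(\partial\Omega)^{2-\alpha p}$), while outside $B_j$ one uses that $\partial\widetilde\Omega_j$ is a graph of a compactly supported function, turning the tail into something proportional to $\delta^p$ times the Besov norm being estimated.
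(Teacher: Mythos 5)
Your overall strategy---truncate the local graph to obtain a compactly supported special Lipschitz domain, apply the Main Lemma \ref{mainlemma} to it, and transfer the weighted integral back to $\Omega$---is the same general route as the paper's, but the step in which you dispose of the cutoff region contains a genuine gap, and it sits exactly at the crux of the proof. You claim that the tail $\int_{\wt\Omega_j\setminus B_j}|\partial B\chi_{\wt\Omega_j}|^p\,\dist(\cdot,\partial\wt\Omega_j)^{p-\alpha p}\,dm$ is bounded by $c\,\delta^p\,\|\wt A_j\|_{\dot B_{p,p}^{1+\alpha-1/p}}^p$ because ``the boundary of $\wt\Omega_j$ is essentially a straight line'' away from $B_j$. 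That is false in the collar adjacent to $B_j$ (an interval of length comparable to $R$) where the cutoff acts: there $\wt A_j=\vphi A_j$ still carries, up to the bounded factor $\vphi$, the oscillation of the genuine boundary of $\Omega$, and by the very equivalence of Corollary \ref{coro40} that you invoke, the contribution of the Whitney squares above the collar to the weighted integral is comparable to the Besov energy of $N$ localized to the collar, with no gain of $\delta^p$. (The $\beta_1$'s there are of absolute size $O(\delta)$, but so are those over $B_j$ itself; what matters is the ratio to the quantity being estimated, and it is of order one, so Lemma \ref{lemnou9} gives no decay in that zone.) Since your collars have the same length scale as the balls $B_j$ and the balls have bounded overlap, summing over $j$ yields $\|N\|_{\dot B_{p,p}^{\alpha-1/p}(\partial\Omega)}^p\lesssim \int_\Omega|\partial B\chi_\Omega|^p\,\dist(\cdot,\partial\Omega)^{p-\alpha p}\,dm + \HH^1(\partial\Omega)^{2-\alpha p} + C\,\|N\|_{\dot B_{p,p}^{\alpha-1/p}(\partial\Omega)}^p$ with $C$ of order one, which is circular.

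The paper supplies precisely the missing mechanism, in two steps that your proposal does not have. First, in Lemma \ref{lempart} the truncation is performed at the much smaller scale $a=\delta^{1/2}\ell(I)$ (short enough that the collar is a tiny fraction of the arc, yet long enough that $\|\wt A'\|_\infty\lesssim\delta^{1/2}$ stays admissible for the Main Lemma), and the collar contribution is not claimed to be small: it is estimated, via a further auxiliary domain $\Omega_0$ at scale $a$ together with Corollary \ref{coro40}, Lemma \ref{lemaux1} and Lemma \ref{lemaux2}, by the Besov energy of $N$ with $s$ restricted to the two endpoint arcs $|s-s_i|\leq 7a$. Second, the smallness is produced only afterwards, in the proof of Lemma \ref{mainlemma2}, by averaging the local estimate over all translates $I_u$ of the arc: since the endpoint arcs occupy a $\delta^{1/2}$-fraction of positions, the averaged collar term becomes $\approx\delta^{1/2}R\,\|N\circ\gamma\|_{\dot B_{p,p}^{\alpha-1/p}}^p$ and can be absorbed into the left-hand side for $\delta$ small. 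Without a two-scale cutoff and such an averaging (or some substitute that makes the collar terms genuinely small relative to $\|N\|^p$), your argument does not close. The remaining ingredients of your proposal---the far-field comparison of $\partial B\chi_{\wt\Omega_j}$ with $\partial B\chi_\Omega$ via the $1/R$ pointwise bound on $B_j$, the $\HH^1(\partial\Omega)^{2-\alpha p}$ bookkeeping for long-range pairs, and the slope control of the truncated function---are essentially sound and parallel the paper's estimates.
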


Let us show first that this result yields Theorem \ref{teodom} as
an easy consequence.

\begin{proof}[\bf Proof of Theorem \ref{teodom}]
In the case $\alpha=1$, it is clear that \rf{eqeq89'} follows from \rf{eqmain73'}.
So assume that $0<\alpha<1$. In this case, if $\theta$ is chosen small enough,
from Lemma \ref{mainlemma2}, we have
$$\theta^{p-\alpha p}\biggl(\HH^1(\partial\Omega)^{2-\alpha p}+\! \int_\Omega|\partial B\chi_\Omega(z)|^p\,\dist(z,
\partial\Omega)^{p-\alpha p}dm(z)\biggr) \geq 2 c_3\,\theta^{2p-\alpha p}
\|N\|_{B_{p,p}^{\alpha-1/p}(\partial\Omega)}^p,$$
where $c_3$ appears in \rf{eqdf890}. Then, by Lemma \ref{lemkeyalfa},
\begin{equation}\label{eqcor5'}
\theta^{p-\alpha p}\,\HH^1(\partial\Omega)^{2-\alpha p} +\|B(\chi_\Omega)\|_{\dot W^{\alpha,p}(\Omega)}^p\gtrsim \theta^{p-\alpha p} \!\!\int_\Omega|\partial B\chi_\Omega(z)|^p\,\dist(z,
\partial\Omega)^{p-\alpha p}\,dm(z).
\end{equation}
Together with Lemma \ref{mainlemma2} again this gives \rf{eqeq89'}.
\end{proof}

\vv

Suppose that $\Omega$ is simply connected.
Consider and arc length   parameterization of $\partial\Omega$ given by
$\gamma:S^1(0,r_0)\to\partial \Omega$, where $2\pi r_0=\HH^1(\partial \Omega)$.
Recall that, for a function $f:S^1(0,r_0)\to\R$ and $0<\alpha<1$ and $1<p<\infty$,
$$\|f\|_{\dot{B}_{p,p}^{\alpha}(S^1(0,r_0))}^p = \iint_{(s,t)\in S^1(0,r_0)\times S^1(0,r_0)}
\frac{|f(s)- f(t)|^p}{|s-t|^{\alpha p+1}}\,ds\,dt.$$
Then, taking into account that $\|N\circ\gamma\|_{\dot{B}_{p,p}^{\alpha-1/p}(S^1(0,r_0))}
\approx \|N\|_{\dot{B}_{p,p}^{\alpha-1/p}(\partial\Omega)}$, \rf{eqmain73'} is equivalent to
\begin{align}\label{eqd477}
\iint_{(s,t)\in S^1(0,r_0)\times S^1(0,r_0)}\!\! &
\frac{|N(\gamma(s))- N(\gamma(t))|^p}{|s-t|^{\alpha p}}\,ds\,dt\\&
\lesssim \int_\Omega|\partial B\chi_\Omega(z)|^p\,\dist(z,
\partial\Omega)^{p-\alpha p}\,dm(z) + \HH^1(\partial\Omega)^{2-\alpha p}.\nonumber
\end{align}

We will use the following notation: given $a>1$ and a small arc $I\subset S^1(0,r_0)$, 
we denote by $aI$ the arc of $S^1(0,r_0)$ with the same mid point as $I$ and length $\ell(aI)
=a\,\ell(I)$.

The main step for the proof of Lemma \ref{mainlemma2} consists of next lemma.

\begin{lemma}\label{lempart}
Suppose that $\Omega$ is simply connected.
Under the assumptions and notation above,
consider an arc $I\subset S^1(0,r_0)$ with $\ell(I)\leq R/4$ and denote by
$s_1,s_2$ the end points of $2I$. Let $a=\delta^{1/2}\,\ell(I)$. Then, for $0<\delta\ll1$ small
enough we have
\begin{align}\label{eqds89}
\iint_{\begin{subarray}{l} s\in I\\ t\in 1.1I
 \end{subarray}} &\frac{|N(\gamma(s))- N(\gamma(t))|^p}{|s-t|^{\alpha p}}\,ds\,dt\lesssim \int_{\Omega\cap B(z_I,4\ell(I))} |\partial B\chi_{\Omega}(z)|^p\,
 \dist(z,\partial\Omega)^{p-\alpha p}\,dm(z)\\
&\quad\quad\quad\quad+ \sum_{i=1}^2
\int_{|s-s_i|\leq 7a}\int_{t\in S^1(0,r_0)} \!\!\!\frac{|N(\gamma(s))-N(\gamma(t))|^{p}}{|s-t
|^{\alpha p}}\,dt\,ds
+c(\delta)\,\ell(I)^{2-\alpha p}.\nonumber
\end{align}
\end{lemma}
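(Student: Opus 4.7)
The plan is to reduce (\ref{eqds89}) to the special-Lipschitz case. After a rigid motion I would place $z_I$ at the origin with the best-approximating line for $\partial\Omega$ on $2I$ aligned with the $x$-axis; then $\partial\Omega\cap B(0,R/2)$ is the graph of a Lipschitz function $A_0$ with $\|A_0'\|_\infty\lesssim\delta$, and the $x$-coordinate map $\sigma:2I\to\R$ is bi-Lipschitz with constants close to $1$. By Lemma~\ref{lemanorm}, $|N(\gamma(s))-N(\gamma(t))|\approx|A_0'(\sigma(s))-A_0'(\sigma(t))|$ and $|s-t|\approx|\sigma(s)-\sigma(t)|$. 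Setting $a=\delta^{1/2}\ell(I)$ and $I_0=\sigma(2I)$, I would build a compactly supported Lipschitz function $A:\R\to\R$ with $\|A'\|_\infty\lesssim\delta$ that equals $A_0$ on the subinterval of $I_0$ lying at distance at least $7a$ from each endpoint of $I_0$, tapering smoothly to zero inside the $7a$-collars and outside $I_0$. Let $\wt\Omega=\{y>A(x)\}$, a special $c\delta$-Lipschitz domain.

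Applying the Main Lemma~\ref{mainlemma} to $\wt\Omega$ and invoking Lemma~\ref{lemanorm} yields
$$\|A\|_{\dot B_{p,p}^{1+\alpha-1/p}}^p\lesssim\int_{\wt\Omega}|\partial B\chi_{\wt\Omega}(z)|^p\,\dist(z,\partial\wt\Omega)^{p-\alpha p}\,dm(z).$$
Rewriting the LHS of (\ref{eqds89}) in the $x$-variable via $\sigma$ and splitting its domain of integration into ``good'' pairs (both of $\sigma(s),\sigma(t)$ at distance $\geq 7a$ from each endpoint of $I_0$, where $A=A_0$ and the contribution is bounded by $\|A\|_{\dot B_{p,p}^{1+\alpha-1/p}}^p$) and ``bad'' pairs (at least one of $s,t$ within $7a$ of an endpoint of $2I$, controlled by the boundary terms on the RHS of (\ref{eqds89})) produces
$$\iint_{s\in I,\,t\in 1.1I}\!\!\frac{|N(\gamma(s))-N(\gamma(t))|^p}{|s-t|^{\alpha p}}\,ds\,dt\lesssim\|A\|_{\dot B_{p,p}^{1+\alpha-1/p}}^p+\sum_{i=1}^2\int_{|s-s_i|\leq 7a}\int_{t\in S^1(0,r_0)}\!\!\!\frac{|N(\gamma(s))-N(\gamma(t))|^p}{|s-t|^{\alpha p}}\,dt\,ds.$$

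It remains to dominate $\int_{\wt\Omega}\cdots$ by the first term of the RHS of (\ref{eqds89}) up to an admissible $c(\delta)\ell(I)^{2-\alpha p}$. Split that integral as $\int_{\wt\Omega\cap B(0,4\ell(I))}+\int_{\wt\Omega\setminus B(0,4\ell(I))}$. For the outside piece, Lemma~\ref{lemderiv2} together with $\|A'\|_1\lesssim\delta\ell(I)$ gives $|\partial B\chi_{\wt\Omega}(z)|\lesssim\delta\ell(I)/\max(\ell(I),|z|)^2$, producing $O(\delta^p\ell(I)^{2-\alpha p})$ after integration. For the inside piece, Lemma~\ref{lemderiv} applied to both $\Omega$ and $\wt\Omega$ identifies the pointwise difference $\partial B\chi_{\wt\Omega}(z)-\partial B\chi_\Omega(z)$ on $B(0,4\ell(I))$ with a Cauchy-type integral over the portions of $\partial\Omega$ and $\partial\wt\Omega$ lying outside $B(0,5\ell(I))$; since $|w-z|\gtrsim\ell(I)$ there, this difference is pointwise of size $O(\delta/\ell(I)+\HH^1(\partial\Omega)/R^2)$, and its $L^p(\dist^{p-\alpha p}\,dm)$-contribution is absorbed into $c(\delta)\ell(I)^{2-\alpha p}$, where $c(\delta)$ also incorporates the Lipschitz character of $\Omega$. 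Combining these estimates gives (\ref{eqds89}).

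The hard part will be this final comparison: since $\wt\Omega$ is unbounded whereas $\Omega$ is bounded, their boundaries differ globally, so the Cauchy representations from Lemma~\ref{lemderiv} must be reconciled with care so that the far-field contribution produces only the admissible $c(\delta)\ell(I)^{2-\alpha p}$ error. A secondary delicate point is the calibration $a=\delta^{1/2}\ell(I)$: this scale must be much larger than the $\delta$-scale errors from the Main Lemma (so the truncation is invisible at the local level) and much smaller than $\ell(I)$ (so the $7a$-collar lies well inside $2I$ and its modification cost can be absorbed into the boundary terms of (\ref{eqds89})).
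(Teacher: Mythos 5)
Your first half matches the paper: truncating the local graph to get a special Lipschitz domain $\wt\Omega$ with slope $\lesssim\delta^{1/2}$ and invoking the Main Lemma (in the paper, via Corollary \ref{coro40}) to bound the left side of \rf{eqds89} by $\int_{\wt\Omega}|\partial B\chi_{\wt\Omega}|^p\dist(\cdot,\partial\wt\Omega)^{p-\alpha p}\,dm$ is exactly the paper's reduction. (Your ``good/bad pairs'' split of the left side is vacuous, by the way: for $s\in I$, $t\in1.1I$ both points lie at distance $\approx\ell(I)\gg 7a$ from the endpoints of $2I$, so every pair is good; the collar terms do not arise from the left side at all.) The genuine gap is in the second half, where you compare the $\wt\Omega$-integral with the $\Omega$-integral. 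You claim that on $B(z_I,4\ell(I))$ the difference $\partial B\chi_{\wt\Omega}-\partial B\chi_\Omega$ is a Cauchy-type integral over the boundary portions lying outside $B(z_I,5\ell(I))$ and hence pointwise $O(\delta/\ell(I)+\HH^1(\partial\Omega)/R^2)$. This is false: by construction $\partial\wt\Omega$ and $\partial\Omega$ differ \emph{inside} $B(z_I,4\ell(I))$, namely in the $7a$-collars around $x_1,x_2$ (these sit at distance $\approx\ell(I)$ from $z_I$), where the taper replaces the true graph. Near those collars $|\partial B\chi_{\wt\Omega}(z)-\partial B\chi_\Omega(z)|$ behaves like $\int_{\Omega\Delta\wt\Omega}|z-w|^{-3}dm(w)$, which blows up as $z$ approaches the symmetric difference (and points of $\wt\Omega\cap B(x_i,4a)$ can even lie in $\wt\Omega\setminus\Omega$, where $\partial B\chi_\Omega$ is not controlled by the right side of \rf{eqds89}). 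So the collar contribution cannot be absorbed into $c(\delta)\ell(I)^{2-\alpha p}$; it genuinely carries the local Besov energy of $N$ near $\gamma(s_i)$ (note that even the crude bound $|N(\gamma(s))-N(\gamma(t))|\lesssim\delta$ gives a divergent $t$-integral since $\alpha p>1$), and this is precisely why the terms $\int_{|s-s_i|\leq7a}\int_t\cdots$ appear in the statement. You flag the far field as the hard part, but the far field and the intermediate region are the easy parts (your outside estimate is fine, and in the intermediate region $\dist(z,\Omega\Delta\wt\Omega)\gtrsim a$ does give a harmless $1/a$ difference bound); the real difficulty is the collars.

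What is missing is the paper's mechanism for the balls $B(x_i,4a)$: introduce a second cutoff $\psi$ at scale $a$ around $x_i$, set $A_0=\psi\vphi A$ and $\Omega_0=\{y>A_0\}$, so that $\Omega_0\Delta\wt\Omega$ stays at distance $\gtrsim a$ from $B(x_i,4a)$ and the pointwise difference $|\partial B\chi_{\Omega_0}-\partial B\chi_{\wt\Omega}|\lesssim 1/a$ there contributes only $c(\delta)\ell(I)^{2-\alpha p}$; then bound $\int_{\Omega_0}|\partial B\chi_{\Omega_0}|^p\dist(\cdot,\partial\Omega_0)^{p-\alpha p}dm$ \emph{from above} by $\|A_0\|_{\dot B^{1+\alpha-1/p}_{p,p}}^p$ using the converse (Cruz--Tolsa) direction of Corollary \ref{coro40}, and finally estimate $\|A_0\|_{\dot B^{1+\alpha-1/p}_{p,p}}$ via the Leibniz-type Lemmas \ref{lemaux1} and \ref{lemaux2}, which is exactly what produces the collar terms $\int_{|s-s_i|\leq7a}\int_{t}\frac{|N(\gamma(s))-N(\gamma(t))|^p}{|s-t|^{\alpha p}}\,dt\,ds$ plus $c(\delta)\ell(I)^{2-\alpha p}$. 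Without this second localization and the use of the upper-bound direction of the equivalence, your direct comparison of $\wt\Omega$ with $\Omega$ on the whole ball cannot close the argument.
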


\begin{proof}
Denote by $s_I$ the mid point of $I$ and set $z_I=\gamma(s_I)$.
Let $A:\R\to\R$ be the Lipschitz function whose graph $\Gamma$ coincides with $\partial \Omega$ on $B(z_I,R)$, so that $\Omega\cap B(z_I,R)$ lies above $\Gamma$, after
a suitable rotation. Notice that $\gamma(6I)\subset B(z_I,R)$, since 
$\ell(I)=\HH^1(\gamma(I))\leq R/4$. 
Let $J\subset\R$ be the interval such that $\{(x,A(x)):x\in J\}=\gamma(I)$, so that
$\{(x,A(x)):x\in 5J\}\subset \gamma(6I)\subset B(z_I,R)$.
Observe that $$\gamma(1.1I)\subset
\{(x,A(x)):x\in 1.1J\}.$$ 
It also immediate to check that $\ell(I)\approx\ell(J)$.
Moreover, translating $\Gamma$  slightly if necessary, we may assume that
 one of the endpoints 
of $\gamma(I)$ lies on the horizontal coordinate axis. Notice that then, since $\|A'\|_\infty\leq\delta$, by the mean value theorem, it turns out that 
\begin{equation}\label{eqrec54}
|A(x)|\lesssim\delta\,\ell(J)\qquad \mbox{for all $x\in5J$.}
\end{equation}
Moreover, we will assume that $A$ is defined in the whole of $\R$ and that $\|A\|_\infty\lesssim
\delta\ell(J)$ and $\|A'\|_\infty\lesssim \delta$.

Denote $z_i=\gamma(s_i)$, for $i=1,2$ (recall that $s_1,s_2$ are the end points of $2I$).
Also, let $x_i$ be such that $z_i=(x_i,A(x_i))$. Assume that $x_1<x_2$. 
Let $\vphi:\R\to\R$ be a $\CC^\infty$ function which equals $1$ on $[x_1,x_2]$ and vanishes on $\R\setminus [x_1-a,\,x_2+a]$, with $\|\vphi'\|_\infty\lesssim 1/a$ (recall that $a=\delta^{1/2}\ell(I)
\approx \delta^{1/2}\ell(J)$). 
Observe that, since we are assuming $\delta$ to be very small,
$$[x_1-a,x_2+a]\subset 3J$$

We consider the auxiliary Lipschitz function $\wt A=\vphi\,A$ and
its graph $\wt \Gamma$.
Let $$\wt\Omega=\{(x,y)\in\C:\,y>\wt A(x)\},$$
and denote by $\wt N(x)$ the outward unit normal at $(x,\wt A(x))\in\wt \Gamma$.
By Corollary \ref{coro40}, we have
$$\|\wt N\|_{\dot{B}_{p,p}^{\alpha-1/p}}\approx
\int_{\wt \Omega}|\partial B\chi_{\wt \Omega}(z)|^p\,\dist(z,
\partial\wt \Omega)^{p-\alpha p}\,dm(z).
$$
Indeed, using \rf{eqrec54},
$$\|\wt A'\|_\infty\leq \|A'\|_\infty + \|\vphi'\|_\infty\,\|\chi_{3J}A\|_\infty\leq
\delta + \frac{c}{a}\,\delta \,\ell(J)\lesssim \delta + \delta^{1/2} \approx\delta^{1/2},$$
and thus the assumption on the small slope of the Lipschitz function in Theorem \ref{teopri} holds for $\delta$ small 
enough.

On the other hand, since $\wt N$ coincides with $N$ on $\gamma(1.1I)$, 
$$\|\wt N\|_{\dot{B}_{p,p}^{\alpha-1/p}}^p\approx 
 \iint_{(x,y)\in\R^2} \!\!\frac{|\wt N(x)- \wt N(y)|^p}{|x-y|^{\alpha p}}\,dx\,dy \gtrsim
  \iint_{\begin{subarray}{l} s\in I\\ t\in 1.1I
 \end{subarray}}\!\! \frac{|N(\gamma(s))- N(\gamma(t))|^p}{|s-t|^{\alpha p}}\,ds\,dt
.$$
Therefore, to prove the lemma it is enough to show that $
\int_{\wt \Omega}|\partial B\chi_{\wt \Omega}|^p\dist(\cdot,
\partial\wt \Omega)^{p-\alpha p}dm$ is bounded above by
the right side of \rf{eqds89}.

Consider the rectangle
$$V=[x_1-2a,\,x_2+2a]\times [-a,a].$$
To estimate $\|\partial B(\chi_{\wt \Omega})\|_{L^p(\wt\Omega)}^p$, we write
\begin{align}\label{eqal80}
\int_{\wt \Omega}|\partial B\chi_{\wt \Omega}|^p\,\dist(\cdot,
\partial\wt \Omega)^{p-\alpha p}\,dm& 
\leq 
\int_{\wt\Omega\setminus V}|\partial B\chi_{\wt \Omega}|^p\,\dist(\cdot,
\partial\wt \Omega)^{p-\alpha p}\,dm
\\
&\quad\!\!+ \int_{ \wt\Omega\cap V\setminus (B(x_1,4a)\cup B(x_2,4a))} \!\!|\partial B(\chi_{\wt \Omega})|^p\dist(\cdot,
\partial\wt \Omega)^{p-\alpha p}\,dm
\nonumber\\
&\quad\!\!
+  \int_{\wt\Omega\cap (B(x_1,4a)\cup B(x_2,4a))}|\partial B(\chi_{\wt \Omega})|^p\dist(\cdot,
\partial\wt \Omega)^{p-\alpha p}\,dm.\nonumber
\end{align}
Let us deal with the first integral on the right side. 
To this end, consider the upper half plane $\Pi=\{(x,y)\in\C:y>0\}$. 
Recall that $\partial B\chi_\Pi(z)=0$ for $z\in\Pi$. Therefore, for $z\in\wt \Omega\setminus V\subset
\Pi$, using the first identity in \rf{eqtyu},
\begin{align*}
|\partial B(\chi_{\wt \Omega})(z)| & = |\partial B(\chi_{\wt \Omega})(z)- \partial B(\chi_{\Pi})(z)|
\\
&\leq 
\int_{\Pi\Delta\wt \Omega}\frac1{|z-w|^3}\,dm(w) 
\leq \frac{m(\Pi\Delta\wt \Omega)}{\dist(z,\Pi\Delta\wt \Omega)^3}\lesssim\frac{\delta \,
\ell(J)^2}{\dist(z,\Pi\Delta\wt \Omega)^3} .
\end{align*}
It is easy to check that if $z\not\in V$, then $\dist(z,\Pi\Delta\wt \Omega)\gtrsim a$.
Using also the fact that $\dist(z,\Pi\Delta\wt \Omega)\approx |z-z_I|$ for $|z-z_I|\geq 4\ell(J)$,
we obtain
\begin{align}\label{eqs31}
\int_{\wt\Omega\setminus V}|\partial B(\chi_{\wt \Omega})|^p\dist(\cdot,
\partial\wt \Omega)^{p-\alpha p}\,dm
&\lesssim \int_{B(z_I,4\ell(J))} \!\!
\frac{\delta^p \,
\ell(J)^{2p}}{a^{3p}}\,\ell(J)^{p-\alpha p}
\,dm(z) \\
&\quad + 
\int_{\C\setminus B(z_I,4\ell(J))} 
\frac{\delta^p \,
\ell(J)^{2p}}{|z-z_I|^{3p}}\,|z-z_I|^{p-\alpha p}\nonumber
\,dm(z)\\
&\lesssim \frac{\delta^p \,
\ell(J)^{3p+2-\alpha p}}{a^{3p}} + \frac{\delta^p \,
\ell(J)^{2p}}{\ell(J)^{2p+\alpha p -2}} \approx \delta^{-p/2} \,
\ell(I)^{2-\alpha p}.\nonumber
\end{align}

Let us turn our attention to the second integral on the right side of \rf{eqal80} now.
In this case, using that $$\wt\Omega\cap V\setminus (B(x_1,4a)\cup B(x_2,4a)) 
= \Omega\cap V\setminus (B(x_1,4a)\cup B(x_2,4a)),$$ 
we write
\begin{align*}
&\int_{\wt\Omega\cap V\setminus (B(x_1,4a)\cup B(x_2,4a))}|\partial B(\chi_{\wt \Omega})|^p
\dist(\cdot,
\partial\wt \Omega)^{p-\alpha p}\,dm\\
&\qquad\qquad\qquad\lesssim \int_{\Omega\cap B(z_I,4\ell(I))} |\partial B(\chi_{\Omega})|^p\dist(\cdot,
\partial\wt \Omega)^{p-\alpha p}\,dm\\
&\qquad\qquad\qquad\quad+\int_{V\setminus (B(x_1,4a)\cup B(x_2,4a))} |\partial B(\chi_{\Omega})-\partial B(\chi_{\wt \Omega})|^p\dist(\cdot,
\partial\wt \Omega)^{p-\alpha p}\,dm.
\end{align*}
We estimate the last integral arguing as above. Observe that for $z\in V\setminus (B(x_1,4a)\cup B(x_2,4a))$, we have $\dist(z,\Omega\Delta\wt\Omega)\gtrsim a$. Then
we have
\begin{align*}
|\partial B(\chi_{\Omega})(z)-\partial B(\chi_{\wt \Omega})(z)|
&\leq \int_{\Omega\Delta\wt\Omega} \frac1{|z-w|^3}\,dm(w)\\
& \leq \int_{|z-w|\geq c^{-1}a} \frac1{|z-w|^3}\,dm(w)\lesssim \frac1a.
\end{align*}
As a consequence, since $\dist(z,\partial\wt\Omega)\lesssim a$ for $z\in V$,
\begin{align*}
\int_{V\setminus (B(x_1,4a)\cup B(x_2,4a))} &|\partial B(\chi_{\Omega})-\partial B(\chi_{\wt \Omega})|^p\,\dist(\cdot,
\partial\wt \Omega)^{p-\alpha p}\,dm\\
&\lesssim \int_{V\setminus (B(x_1,4a)\cup B(x_2,4a))} \frac{a^{p-\alpha p}}{a^p}\,dm
 \lesssim \frac{a\,\ell(J)}{a^{\alpha p}} \approx \delta^{(1-\alpha p)/2} \,\ell(I)^{2-\alpha p},\nonumber
\end{align*}
and thus, taking into account also that $\dist(z,\partial\wt\Omega)= \dist(z,\partial\Omega)$
in the domain of integration,
\begin{align}\label{eqs32}
&\int_{V\setminus (B(x_1,4a)\cup B(x_2,4a))} |\partial B(\chi_{\wt \Omega})|^p \dist(\cdot,\partial\wt\Omega)^{p-\alpha p}\,dm \\
&\qquad\lesssim \nonumber
\int_{V\setminus (B(x_1,4a)\cup B(x_2,4a))} |\partial B(\chi_{\Omega})|^p 
\dist(\cdot,\partial\Omega)^{p-\alpha p}\,dm + \delta^{(1-\alpha p)/2} \,\ell(I)^{2-\alpha p}\\
&\qquad \leq
\int_{\Omega\cap B(z_I,4\ell(I))} |\partial B(\chi_{\Omega})|^p \dist(\cdot,\partial\Omega)^{p-\alpha p}\,dm+\delta^{(1-\alpha p)/2} \,\ell(I)^{2-\alpha p}.\nonumber
\end{align}

It remains to estimate the last integral in \rf{eqal80}. First we deal with the integral
on $B(x_1,4a)$.
Let $\psi:\R\to\R$ be a $\CC^\infty$ function such that $0\leq\psi\leq1$ which equals $1$  
in $[x_1-5a,x_1+5a]$ and vanishes in $\R\setminus[x_1-6a,x_1+6a]$, with $\|\psi'\|_\infty\leq c/a$.
Denote $A_0 = \psi\,\wt A = \psi\,\vphi\,A$ and set 
$$\Omega_0=\{(x,y)\in\C:\,y>A_0(x)\}.$$
Then we have
\begin{align}\label{eqed5}
\int_{\wt\Omega\cap B(x_1,4a)} |\partial B(\chi_{\wt \Omega})|^p\,\dist(\cdot,
\partial& \wt \Omega  )^{p-\alpha p}\,dm
\lesssim \int_{\wt\Omega\cap B(x_1,4a)} |\partial B(\chi_{\Omega_0})|^p\dist(\cdot,
\partial\wt \Omega)^{p-\alpha p}\,dm\\
&+\int_{\wt\Omega\cap B(x_1,4a)} |\partial B(\chi_{\Omega_0})-\partial B(\chi_{\wt \Omega})|^p\dist(\cdot,
\partial\wt \Omega)^{p-\alpha p}\,dm.\nonumber
\end{align}
Since $A_0$ coincides with $\wt A$ in $[x_1-5a,x_1+5a]$, 
 it turns out that, for $z\in B(x_1,4a)$, $\dist(z,\Omega_0\Delta\wt\Omega)\gtrsim a$, and thus
$$|\partial B(\chi_{\Omega_0})(z)-\partial B(\chi_{\wt \Omega})(z)|
\leq \int_{\Omega_0\Delta\wt\Omega} \frac1{|z-w|^3}\,dm(w)
\leq \int_{|z-w|\gtrsim a} \frac1{|z-w|^3}\,dm(w)\lesssim\frac1a
.$$
Therefore, using that $\dist(\cdot,\partial\wt\Omega)\lesssim a$ on $B(x_1,4a)$,
\begin{align}\label{eqdif63}
\int_{B(x_1,4a)} |\partial B(\chi_{\Omega_0})-\partial B(\chi_{\wt \Omega})|^p
\dist(\cdot,\partial\wt\Omega)^{p-\alpha p}\,dm &
\lesssim \frac{a^{p-\alpha p} \,a^2}{a^p}\approx \delta^{(2-\alpha p)/2}\,\ell(I)^{2-\alpha p}.
\end{align}

For the first integral on the right side of \rf{eqed5} we use the fact that $\wt\Omega\cap B(x_1,4a)=\Omega_0\cap B(x_1,4a)$, and moreover that $\dist(\cdot,\partial\wt\Omega)=\dist(\cdot,\partial\Omega_0)$ in $\wt\Omega\cap B(x_1,4a)$. Then, by Corollary \ref{coro40} applied to
$\Omega_0$, we get
\begin{align}\label{eqd41}
\int_{\wt\Omega\cap B(x_1,4a)} |\partial B(\chi_{\Omega_0})|^p\dist(\cdot,\partial\wt\Omega)^{p-\alpha p}\,dm& \leq \int_{\Omega_0} |\partial B(\chi_{\Omega_0})|^p\dist(\cdot,\partial\Omega_0)^{p-\alpha p}\,dm\\ &\lesssim
 \|A_0\|_{\dot{B}_{p,p}^{1+\alpha-1/p}}^p.\nonumber
\end{align}
We have  
\begin{equation}\label{eqd42}
\|A_0\|_{\dot{B}_{p,p}^{1+\alpha-1/p}}\approx \|A_0'\|_{\dot{B}_{p,p}^{\alpha-1/p}}
\leq \|\vphi\psi A'\|_{\dot{B}_{p,p}^{\alpha-1/p}} + \|(\vphi\psi)' A\|_{\dot{B}_{p,p}^{\alpha-1/p}}.
\end{equation}
From Lemma \ref{lemaux1}, we deduce that
\begin{align*}
\|\vphi\psi A'\|_{\dot{B}_{p,p}^{\alpha-1/p}}^p&\lesssim \iint |\vphi\psi \Delta_h A'(x)|^p\,\frac{dh}{h^{\alpha p}}\,dx+
 \|\vphi\psi\|_{\dot B_{p,p}^{\alpha-1/p}}^p\|A'\|_\infty^p.
\end{align*}

It is easy to check that
$\|\vphi\psi\|_{B_{p,p}^{1-1/p}}^p\lesssim a^{2-\alpha p}$. Indeed, this a straightforward consequence of
Lemma \ref{lemaux2}: since $\vphi\psi$ is supported on an interval with length $\lesssim a$ and 
$\|(\vphi\psi)'\|_\infty\lesssim1/a$,
$$\|\vphi\psi\|_{\dot{B}_{p,p}^{\alpha-1/p}}\lesssim a^{1-\alpha+2/p}\,\|(\vphi\psi)'\|_\infty
\lesssim a^{-\alpha+2/p},
$$
and so our last claim holds.
Then we obtain
\begin{align}\label{eqd43}
\|\vphi\psi A'\|_{\dot{B}_{p,p}^{\alpha-1/p}}^p&\lesssim \iint |\chi_{[x_1-6a,x_1+6a]}\, \Delta_h A'(x)|^p\,\frac{dh}{h^{\alpha p}}\,dx
+  \delta^p\,a^{2-\alpha p}.
\end{align}
We split the integral on the right side above as follows:
$$\int_{|x-x_1|\leq6a}\int_{|h|\leq \ell(I)/2}\!\!| \Delta_h A'(x)|^p\,\frac{dh}{h^{\alpha p}}\,dx
+ \int_{|x-x_1|\leq6a}\int_{|h|> \ell(I)/2}\!\!|\Delta_h A'(x)|^p\,\frac{dh}{h^{\alpha p}}\,dx=I_1+I_2.$$
For $I_2$ we use the rough estimate $|\Delta_h A'(x)|\leq 2\delta$, and then, using that
$\alpha p>1$, we get
$$I_2\leq 2^p\delta^p\int_{|x-x_1|\leq6a}\int_{|h|> \ell(I)/2}\frac{dh}{h^{\alpha p}}\,dx
\lesssim \delta^p a\,\ell(I)^{1-\alpha p} =c(\delta) \ell(I)^{2-\alpha p}.
$$

For $I_1$, recall that  
$|A'(x)-A'(y)|\approx|N(x,A(x))-N(y,A(y))|$ for $x,y\in 5J$, by \rf{eqn62}. Thus we get
\begin{align*}
I_1 &= \int_{|x-x_1|\leq6a}\int_{|x-y|\leq \ell(I)/2} \frac{|A'(x)-A'(y)|^p}{|x-y|^{\alpha p}}\,dy\,dx\\
&\lesssim \int_{|s-s_1|\leq 7a}\int_{t\in S^1(0,r_0)} \frac{|N(\gamma(s))-N(\gamma(t))|^p}{|s-t|^{\alpha
p}}\,dt\,ds.
\end{align*}
Therefore, from \rf{eqd43} we derive
\begin{equation}\label{eqd43.5}
\|\vphi\psi A'\|_{\dot{B}_{p,p}^{\alpha-1/p}}^p\lesssim 
\int_{|s-s_1|\leq 7a}\int_{t\in S^1(0,r_0)} \frac{|N(\gamma(s))-N(\gamma(t))|^p}{|s-t|^{\alpha p}}\,dt\,ds
+  c(\delta)\,\ell(I)^{2-\alpha p}.
\end{equation}

To estimate $\|(\vphi\psi)' A\|_{\dot{B}_{p,p}^{\alpha-1/p}}$ we use that $(\vphi\psi)' A$ is a 
Lipschitz function supported on $[x_1-6a,x_1+6a]$ satisfying
$$\|[(\vphi\psi)' A]'\|_\infty \leq 
\|(\vphi\psi)'' A\|_\infty + \|(\vphi\psi)' A'\|_\infty\lesssim \frac{\delta\,\ell(I)}{a^2}
+ \frac\delta a \lesssim \frac1{\ell(I)}.$$
Then Lemma \ref{lemaux2} tells us that
\begin{equation}\label{eqd44}
\|(\vphi\psi)' A\|_{\dot{B}_{p,p}^{\alpha-1/p}}\lesssim a^{1-\alpha +2/p}\,\|[(\vphi\psi)' A]'\|_\infty
\lesssim c(\delta)\,\ell(I)^{-\alpha+2/p}.
\end{equation}
From \rf{eqed5}, \rf{eqdif63},  \rf{eqd41}, \rf{eqd42}, \rf{eqd43.5},
and \rf{eqd44}, we get
\begin{align}\label{eqs33}
\int_{\wt\Omega\cap B(x_1,4a)} |\partial B( & \chi_{\wt\Omega})|^p \dist(\cdot,\partial\wt\Omega)\,dm\\
&\lesssim
\int_{|s-s_1|\leq 7a}\int_{t\in S^1(0,r_0)} \!\!\!\!\!\frac{|N(\gamma(s))-N(\gamma(t))|^p}{
|s-t|^{\alpha p}}\,dt\,ds
+ c(\delta) \ell(I)^{2-\alpha p}.   \nonumber
\end{align}
An analogous inequality holds the integral over $B(x_2,4a)$.

Plugging the estimates obtained in \rf{eqs31}, \rf{eqs32} and \rf{eqs33} into \rf{eqal80}, we get
\begin{align*}
\|\partial B(\chi_{\wt \Omega})\,&\dist(\cdot,\partial\wt\Omega)^{1-\alpha}\|_{L^p(\wt\Omega)}^p\lesssim \int_{\Omega\cap B(z_I,4\ell(I))} |\partial B(\chi_{\Omega})|^p\dist(\cdot,\partial
\Omega)^{p-\alpha p}\,dm\\
&\quad+ \sum_{i=1}^2\int_{|s-s_i|\leq 7a}\int_{t\in S^1(0,r_0)} \!\!\!\frac{|N(\gamma(s))-N(\gamma(t))|^p}{|s-t|^{\alpha p}}\,dt\,ds+c(\delta)\,\ell(I)^{2-\alpha p},
\end{align*}
which proves the lemma.
\end{proof}

\vspace{5mm}

\begin{proof}[\bf Proof of Lemma \ref{mainlemma2}]
Suppose first that $\Omega$ is simply connected, and let $\gamma:S^1(0,r_0)\to\partial\Omega$
be an arc length parameterization of $\partial\Omega$. We will prove \rf{eqd477}.

Let $I\subset S^1(0,r_0)$ be an arc $\ell(I)\leq R/4$. We set
\begin{align*}
\iint_{\begin{subarray}{l} s\in I\\ t\in S^1(0,r_0)
 \end{subarray}} \frac{|N(\gamma(s))- N(\gamma(t))|^{ p}}{|s-t|^{\alpha p}}\,ds\,dt
& =
\iint_{\begin{subarray}{l} s\in I\\ t\in 1.1I
 \end{subarray}} \frac{|N(\gamma(s))- N(\gamma(t))|^p}{|s-t|^{\alpha p}}\,ds\,dt \\
 &\quad \!+  \iint_{\begin{subarray}{l} s\in I\\ t\in S^1(0,r_0)\setminus 1.1I
 \end{subarray}} \!\!\!\!\!\!\frac{|N(\gamma(s))- N(\gamma(t))|^p}{|s-t|^{\alpha p}}\,ds\,dt.
\end{align*}
To estimate the last integral we use the fact that $|s-t|\gtrsim \ell(I)$ in the domain
of integration. Indeed, for $s\in I_j$ we have
$$\int_{t\in S^1(0,r_0)\setminus 1.1I}\!\!\!
 \frac{|N(\gamma(s))- N(\gamma(t))|^p}{|s-t|^{\alpha p}}\,dt \leq \!
\int_{t\in S^1(0,r_0)\setminus 1.1I}\frac{2^p}{|s-t|^{\alpha p}}\,dt\lesssim 
\frac1{\ell(I)^{\alpha p-1}}.$$
From the last estimate and Lemma \ref{lempart} we obtain
\begin{align}\label{eqd890}
\iint_{\begin{subarray}{l} s\in I\\ t\in S^1(0,r_0)
 \end{subarray}} \!\!&\frac{|N(\gamma(s))- N(\gamma(t))|^p}{|s-t|^{\alpha p}}\,ds\,dt\lesssim \int_{
 \Omega\cap B(z_I,4\ell(I))} |\partial B(\chi_{\Omega})|^p\dist(\cdot,\partial
\Omega)^{p-\alpha p}\,dm\\
&\quad\quad+ \sum_{i=1}^2
\int_{|s-s_i|\leq 7a}\int_{t\in S^1(0,r_0)} \!\!\!\frac{|N(\gamma(s))-N(\gamma(t))|^p}{|s-t|^{
\alpha p}}\,dt\,ds
+c(\delta)\,\ell(I)^{2-\alpha p},\nonumber
\end{align}
where $s_1,s_2$ are the end points of $2I$ and $a=\delta^{1/2}\,\ell(I)$.

 Given $u\in S^1(0,r_0)$, we denote by $I_u$ the arc of $S^1(0,r_0)$ with length
$R/4$ whose mid point is $u$. 
Now we average the inequality \rf{eqd890} over all the intervals $I_u$, $u\in
S^1(0,r_0)$.
By Fubini, we have
\begin{multline*}
\int_{u\in S^1(0,r_0)}
\iint_{\begin{subarray}{l} s\in I_u\\ t\in S^1(0,r_0)
 \end{subarray}} \frac{|N(\gamma(s))- N(\gamma(t))|^p}{|s-t|^{\alpha p}}\,ds\,dt\,du
 \\ = 
 \frac{R}4
 \iint_{\begin{subarray}{l} s\in S^1(0,r_0)\\ t\in S^1(0,r_0)
 \end{subarray}} \frac{|N(\gamma(s))- N(\gamma(t))|^p}{|s-t|^{\alpha p}}\,
 ds\,dt =  \frac{R}4\,\|N\circ\gamma\|_{\dot{B}_{p,p}^{\alpha -1/p}}^p.
\end{multline*}

Now we turn to the right side of \rf{eqd890}. Concerning the first integral, we have
\begin{multline*}
\int_{u\in S^1(0,r_0)}\int_{\Omega\cap B(z_{I_u},4\ell(I))} \!\!|\partial B(\chi_{\Omega})|^p
\dist(\cdot,\partial\Omega)^{p-\alpha p}dm\,du
\\
= \int_\Omega\biggl(\int_{u:|\gamma(u)-w|\leq4\ell(I)}\!\!\!du\biggr)|\partial B(\chi_{\Omega})(w)|^p
\dist(w,\partial
\Omega)^{p-\alpha p}\,dm(w).
\end{multline*}
Taking into account that $\Omega$ is a Lipschitz domain, it follows that, for each $w\in\C$, 
$$\HH^1\{u:|\gamma(u)-w|\leq R\}\leq c\,R.$$
Thus,
\begin{multline*}
\int_{u\in S^1(0,r_0)}\int_{\Omega\cap B(z_{I_u},R)} |\partial B(\chi_{\Omega})|^p
\dist(\cdot,\partial
\Omega)^{p-\alpha p}\,dm\,du\\
\lesssim R\int_\Omega |\partial B(\chi_{\Omega})|^p\dist(\cdot,\partial
\Omega)^{p-\alpha p}\,
dm.
\end{multline*}

Now we consider the second term on the right side of \rf{eqd890}, for $i=1$, say:
\begin{align*}
\int_{u\in S^1(0,r_0)}\int_{|s-s_i|\leq 7a}&\int_{t\in S^1(0,r_0)} \!\!\!\frac{|N(\gamma(s))-N(\gamma(t))|^p}{|s-t|^{\alpha p}}\,dt\,ds\,du\\
& = 14a\int_{s\in S^1(0,r_0)}\int_{t\in S^1(0,r_0)} \!\!\!\frac{|N(\gamma(s))-N(\gamma(t))|^p}{|s-t|^{\alpha p}}\,dt\,ds\\
&\approx \delta^{1/2}R\,\|N\circ\gamma\|_{\dot{B}_{p,p}^{\alpha -1/p}(S^1(0,r_0))}^p.
\end{align*}
Clearly, the integral of the last term of \rf{eqd890} over $S^1(0,r_0)$ equals $c(\delta)R^{2-\alpha p}r_0$. Then finally we deduce that
\begin{align*}
\frac{R}4\,\|N\circ\gamma\|_{\dot{B}_{p,p}^{\alpha -1/p}(S^1(0,r_0))}^p&\lesssim R\,\|\partial B(\chi_{\Omega})\dist(\cdot,
\partial\Omega)^{1-\alpha }\|_{L^p(\Omega)}^p\\
&\quad
+ \delta^{1/2}R\,\|N\circ\gamma\|_{\dot{B}_{p,p}^{\alpha -1/p}(S^1(0,r_0))}^p + c(\delta)R^{2-\alpha p}r_0.
\end{align*}
Thus, if $\delta$ is taken small enough, then
$$\|N\circ\gamma\|_{\dot{B}_{p,p}^{\alpha -1/p}(S^1(0,r_0))}^p\lesssim \|\partial B(\chi_{\Omega})
\dist(\cdot,
\partial\Omega)^{1-\alpha }\|_{L^p(\Omega)}^p
+ c(\delta)R^{1-\alpha p}r_0,$$
and thus the theorem follows (in the case where $\Omega$ is simply connected).

\vspace{3mm}

If $\Omega$ is not simply connected, then $\partial\Omega$ has a finite number of components
(because it is a Lipschitz domain).
Arguing as above, one can show that \rf{eqd477} holds for the arc length parameterization $\gamma$
of each component, and then we are done.
\end{proof}


\section{The Beurling transform in $B_{p,p}^\alpha(\Omega)$}\label{sec6}

Recall that for a  Lipschitz or special Lipschitz domain and $f\in L^1_{loc}(\Omega)$, one
sets
$$\|f\|_{\dot{B}_{p,p}^\alpha(\Omega)}^p =
\iint \frac{|f(x)-f(y)|^p}{|x-y|^{\alpha p + 1}}\,dx\,dy.$$
In \cite{Cruz-Tolsa} it was shown that, for $0<\alpha<1$, the estimate \rf{eqct3} is also valid  with
$\|B(\chi_\Omega)\|_{\dot 
B_{p,p}^\alpha(\Omega)}$ replacing $\|B(\chi_\Omega)\|_{\dot W^{\alpha,p}(\Omega)}$.
That is, 
\begin{equation}\label{eqwwss}
\|B(\chi_\Omega)\|_{\dot B_{p,p}^\alpha(\Omega)}\leq c\,\|N\|_{\dot{B}_{p,p}^{\alpha-1/p}(\partial\Omega)}.
\end{equation}

One can also check that Theorems \ref{teodom} and \ref{teopri} also hold with
$\|B(\chi_\Omega)\|_{\dot 
B_{p,p}^\alpha(\Omega)}$ instead of $\|B(\chi_\Omega)\|_{\dot W^{\alpha,p}(\Omega)}$. So we have:

\begin{theorem}\label{teodom'}
Let
$0<\alpha<1$ and $1<p<\infty$ such that $\alpha p> 1$.
Let $\Omega\subset\C$ be either a $(\delta,R)$-Lipschitz domain or a 
$\delta$-Lipschitz domain, and assume that $\delta$ is small enough. If $\Omega$ is a Lipschitz domain, then
$$
\|N\|_{\dot B_{p,p}^{\alpha-1/p}(\partial\Omega)}\leq c\,\|B(\chi_\Omega)\|_{\dot 
B_{p,p}^\alpha(\Omega)}
+ \HH^1(\partial \Omega)^{-\alpha+2/p},
$$
and if it is a special Lipschitz domain,
$$\|N\|_{\dot B_{p,p}^{\alpha-1/p}(\partial\Omega)}\leq c\,\|B(\chi_\Omega)\|_{\dot 
B_{p,p}^\alpha(\Omega)}.$$
\end{theorem}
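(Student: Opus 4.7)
The plan is to prove this by reducing to the Sobolev case already handled by Theorems \ref{teodom} and \ref{teopri}. The key observation is that the only place where the Sobolev seminorm $\|B\chi_\Omega\|_{\dot W^{\alpha,p}(\Omega)}$ enters the proofs of those theorems is through Lemma \ref{lemkeyalfa}, which bounds this seminorm from below by $\theta^{p-\alpha p}\int_\Omega|\partial B\chi_\Omega|^p\dist(\cdot,\partial\Omega)^{p-\alpha p}\,dm$ modulo a small multiple of $\|N\|_{\dot B_{p,p}^{\alpha-1/p}(\partial\Omega)}^p$. All other ingredients (the Main Lemma \ref{mainlemma}, the estimates on $S$ via Lemma \ref{lemnou9}, and the inductive smallness-of-$\theta$ step) remain valid verbatim. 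Thus it suffices to establish the exact analog of Lemma \ref{lemkeyalfa} with $\dot B_{p,p}^\alpha(\Omega)$ in place of $\dot W^{\alpha,p}(\Omega)$.

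To prove this analog, I would fix $x\in Q\in\WW(\Omega)$ and restrict the defining integral for $\|B\chi_\Omega\|_{\dot B_{p,p}^\alpha(\Omega)}^p$ to the region $|y-x|\le\theta\ell(Q)\subset\Omega$, exactly as in \rf{eqd303}. Writing $f=B\chi_\Omega$ and using analyticity, Taylor's formula yields $|f(y)-f(x)-f'(x)(y-x)|\le \tfrac12\sup_{w\in 3Q}|f''(w)|\,|y-x|^2$; then from the elementary inequality $|A|^p\ge 2^{1-p}|B|^p-|A-B|^p$ (which replaces \rf{eqff55} in the $p$-th power setting) one gets
$$|f(y)-f(x)|^p\;\gtrsim\;|f'(x)|^p|y-x|^p\;-\;C_p\sup_{w\in 3Q}|f''(w)|^p\,|y-x|^{2p}.$$
Dividing by $|x-y|^{2+\alpha p}$ and integrating over $|y-x|\le\theta\ell(Q)$, the condition $\alpha p>1>\alpha$ makes both resulting power integrals converge and gives a main term of order $(\theta\ell(Q))^{p-\alpha p}|f'(x)|^p$ against an error of order $(\theta\ell(Q))^{2p-\alpha p}\sup_{3Q}|f''|^p$. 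Integrating in $x\in\Omega$ and using $\dist(x,\partial\Omega)\approx\ell(Q)$ on each Whitney square produces
$$\|B\chi_\Omega\|_{\dot B_{p,p}^\alpha(\Omega)}^p\gtrsim \theta^{p-\alpha p}\!\!\int_\Omega|\partial B\chi_\Omega|^p\dist(\cdot,\partial\Omega)^{p-\alpha p}\,dm-c\,\theta^{2p-\alpha p}S,$$
with $S=\sum_{Q}\ell(Q)^{2+2p-\alpha p}\sup_{3Q}|\partial^2 B\chi_\Omega|^p$ as in \rf{eqdf891}.

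At this point I would simply quote the estimate $S\lesssim\|N\|_{\dot B_{p,p}^{\alpha-1/p}(\partial\Omega)}^p$ already established inside the proof of Lemma \ref{lemkeyalfa} (it uses Lemma \ref{lemnou9}, Cauchy--Schwarz over the dyadic ancestors of $\phi(Q)$, and Dorronsoro's characterization via $\beta_1$'s), together with the special-domain version which is even simpler. This yields a Besov analog of \rf{eqdf890}, after which the derivations of Theorems \ref{teopri} and \ref{teodom} go through unchanged: combine with the Main Lemma \ref{mainlemma} (resp.\ Lemma \ref{mainlemma2}) to absorb the $\theta^{2p-\alpha p}\|N\|^p$ term into the dominant $\theta^{p-\alpha p}\|N\|^p$ term, choosing $\theta$ sufficiently small.

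The step I expect to require the most care is verifying that the pointwise $p$-th power Taylor inequality and the ensuing integrations produce exactly the same pair of exponents $\theta^{p-\alpha p}$ and $\theta^{2p-\alpha p}$ as in the Sobolev case, since it is the gap between these two exponents that permits the absorption argument. Once that bookkeeping is settled, no new geometric or harmonic-analytic ingredient is needed, and the bound $\|N\|_{\dot B_{p,p}^{\alpha-1/p}(\partial\Omega)}\lesssim\|B\chi_\Omega\|_{\dot B_{p,p}^\alpha(\Omega)}+\HH^1(\partial\Omega)^{-\alpha+2/p}$ (and its cleaner special-domain counterpart) follows.
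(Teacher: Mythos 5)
Your proposal is correct and follows essentially the same route as the paper: the paper's Section 6 argument likewise reduces everything to a Besov analog of Lemma \ref{lemkeyalfa}, obtained by restricting the double integral defining $\|B\chi_\Omega\|_{\dot B_{p,p}^\alpha(\Omega)}^p$ to Whitney-scale distances $|x-y|\lesssim\ell(Q_x)$ and applying the Taylor estimate \rf{eqff55}, after which the Main Lemma \ref{mainlemma} and Lemma \ref{mainlemma2} are invoked unchanged. Your only deviation is cosmetic: you use the elementary $p$-th power inequality $|A|^p\geq 2^{1-p}|B|^p-|A-B|^p$ in place of the squared version \rf{eqff55}, and your exponent bookkeeping ($\theta^{p-\alpha p}$ versus $\theta^{2p-\alpha p}$, requiring only $\alpha<1$ for convergence of the radial integrals) matches the paper's.
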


Observe that from \rf{eqwwss}, Theorem \ref{teodom'} and the analogous results involving
$\dot W^{\alpha,p}(\Omega)$ one deduces that, under the assumptions of Theorem \ref{teodom'},
$$B(\chi_\Omega) \in \dot B_{p,p}^\alpha(\Omega) \quad\Longleftrightarrow \quad
B(\chi_\Omega) \in \dot W^{\alpha,p}(\Omega).$$
Moreover, if $\Omega$ is a special Lipschitz domain,
$$\|B(\chi_\Omega)\|_{\dot 
B_{p,p}^\alpha(\Omega)} \approx \|B(\chi_\Omega)\|_{\dot W^{\alpha,p}(\Omega)}.$$

The proof of Theorem \ref{teodom'} follows by arguments very similar to the ones for 
Theorems \ref{teodom} and \ref{teopri}. One can check that the key Lemma \ref{lemkeyalfa}
also holds replacing $\|B(\chi_\Omega)\|_{\dot W^{\alpha,p}(\Omega)}$ by
$\|B(\chi_\Omega)\|_{\dot B_{p,p}^\alpha(\Omega)}$. Indeed, one writes
$$\|B\chi_\Omega\|_{\dot{B}_{p,p}^\alpha(\Omega)}^p \geq
\iint_{|x-y|\leq \ell(Q_x)} \frac{|B\chi_\Omega(x)-B\chi_\Omega(y)|^p}{|x-y|^{\alpha p + 1}}\,dx\,dy,$$
where $Q_x$ is the square from $\WW(\Omega)$ that contains $x$. Then, one uses the estimate 
\rf{eqff55} and then argues as in the proof of the lemma for the $\dot W^{\alpha,p}(\Omega)$ norm.
Applying this new version of Lemma \ref{lemkeyalfa} together with the Main Lemma \ref{mainlemma}
and Lemma \ref{mainlemma2}, Theorem \ref{teodom'} follows.


\vvv

\end{document}